\definecolor{indigo}{rgb}{0.29, 0.0, 0.51}
\newtheorem{question}{Question}
\newtheorem*{conjecture}{Conjecture}
\newtheorem{theorem}{Theorem}
\newtheorem{proposition}[theorem]{Proposition}
\newtheorem{lemma}[theorem]{Lemma}
\newtheorem{corollary}[theorem]{Corollary}
\theoremstyle{definition}
\newtheorem{definition}[theorem]{Definition}
\theoremstyle{remark}
\newtheorem{remark}[theorem]{Remark}
\newtheorem{example}[theorem]{Example}
\def\dfn#1{{\em #1}}
\def\R{\mathbb{R}}
\def\Z{\mathbb{Z}}
\def\F{\mathbb{F}}
\def\a{\mathbf{a}}
\def\b{\mathbf{b}}
\def\z{\mathbf{z}}
\def\alphas{\boldsymbol{\alpha}}
\def\betas{\boldsymbol{\beta}}
\def\omegas{\boldsymbol{\omega}}
\def\xs{\mathbf{x}}
\def\ys{\mathbf{y}}
\newcommand{\frS}{\mathfrak{S}}
\newcommand{\frs}{\mathfrak{s}}
\newcommand{\frt}{\mathfrak{t}}
\newcommand{\fru}{\mathfrak{u}}
\newcommand{\cC}{\mathcal{C}}
\newcommand{\cD}{\mathcal{D}}
\newcommand{\cH}{\mathcal{H}}
\newcommand{\cM}{\mathcal{M}}
\DeclareMathOperator{\Hom}{{Hom}}
\DeclareMathOperator{\Lef}{{Lef}}
\DeclareMathOperator{\Spin}{{Spin}}
\DeclareMathOperator{\GL}{{GL}}
\numberwithin{theorem}{section}
\theoremstyle{plain}
\begin{document}

\title[Bounding exotic $4$--manifolds]{On $3$--manifolds that are boundaries of exotic $4$--manifolds} %bound $4$--manifolds\\ with infinitely many smooth structures}

\author{John B. Etnyre}

\author{Hyunki Min}

\author{Anubhav Mukherjee}

\address{School of Mathematics \\ Georgia Institute
of Technology \\  Atlanta  \\ Georgia}

\email{etnyre@math.gatech.edu}

\email{hmin38@gatech.edu}

\email{anubhavmaths@gatech.edu}

%\subjclass[2000]{57R17}

\begin{abstract}
We give several criteria on a closed, oriented $3$--manifold that will imply that it is the boundary of a (simply connected) $4$--manifold that admits infinitely many distinct smooth structures. We also show that any weakly fillable contact $3$--manifold, or contact $3$--manifold with non-vanishing Heegaard Floer invariant, is the boundary of a simply connected $4$--manifold that admits infinitely many distinct smooth structures each of which supports a symplectic structure with concave boundary, that is there are infinitely many exotic caps for any such contact manifold. 
\end{abstract}

\maketitle

%%%%%%%%%%%%%%%%%%%%%%%%%%%%%%%%%%%%
\section{Introduction}
%%%%%%%%%%%%%%%%%%%%%%%%%%%%%%%%%%%%

Quickly after the groundbreaking work of Freedman \cite{Freedman82} and Donaldson \cite{Donaldson83} in the early 1980s it was realized that closed $4$--manifolds could support more than one smooth structure. The first such example appeared in Donaldson's paper \cite{Donaldson85} after which it was shown by Okonek and Van de Ven \cite{OVV86} and Friedman and Morgan \cite{FM88}  that some topological $4$--manifolds admit infinitely many smooth structures. Since then there has been a great deal of work showing that many simply connected $4$--manifolds admit infinitely many smooth structures and it is possible that any $4$--manifold admitting a smooth structure admits infinitely many. 

A relative version of this phenomena has not been as well studied. Natural questions along these lines are the following. 
\begin{question}\label{q1}
Given a smooth $4$--manifold with boundary, does it admit infinitely many distinct smooth structures?
\end{question}
\noindent
We also have the following easier question. 
\begin{question}\label{q2}
Given a $3$--manifold, is it the boundary of a $4$--manifold that admits infinitely many smooth structures?
\end{question}

\subsection{Exotic fillings}
In this paper we give several partial answers to the second question.  The first is the following. 

\begin{theorem}\label{thm:main3}
    Let $Y$ be a closed, connected, oriented $3$--manifold. Suppose either $Y$ (or $-Y$) admits a contact structure $\xi$ such that its Heegaard Floer contact invariant $c^+(\xi)$ does not vanish. Then there exists a compact, simply connected $4$--manifold $X$ such that $\partial X=-Y$ (or $Y$) and $X$ admits infinitely many non-diffeomorphic smooth structures, each of which admits a symplectic structure that is a strong symplectic cap for $(Y,\xi)$ $(\text{or } (-Y,\xi))$.
\end{theorem}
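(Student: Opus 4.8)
\medskip

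\noindent\textit{Proof strategy.}
Assume $(Y,\xi)$ has $c^+(\xi)\neq 0$; the case in which $-Y$ carries the contact structure is entirely parallel. The plan is to construct a single well-chosen symplectic cap for $(Y,\xi)$ and then to generate infinitely many smooth structures on it by Fintushel--Stern knot surgery along a torus in its interior, performed with fibered torus knots so that the cap structure survives at every stage; the hypothesis on the contact invariant is used only once, to force a Seiberg--Witten invariant to be nonzero. By Eliashberg and Etnyre every contact $3$--manifold bounds a symplectic cap, and such a cap can be arranged to be simply connected. Taking the symplectic fiber sum of this cap with an elliptic surface $E(n)$, $n\geq 2$, along an auxiliary square-zero symplectic torus --- an operation supported away from the boundary --- produces a simply connected symplectic cap $(X,\omega)$ for $(Y,\xi)$ with $b^+(X)>1$ that contains an embedded square-zero symplectic torus $T$ with $\pi_1(X\setminus\nu T)=1$.

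\smallskip

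\noindent\textit{The main point.}
The crucial claim is that the relative Seiberg--Witten invariant of $X$ in its canonical $\Spin^c$ structure does not vanish, and this is the step I expect to be the main obstacle: the usual route, in which one builds a closed symplectic $4$--manifold containing $(Y,\xi)$ and appeals to Taubes, is unavailable here because $(Y,\xi)$ need not be symplectically fillable. Instead one views $X$, with a small Darboux ball deleted, as a symplectic cobordism from the standard tight $(S^3,\xi_{\mathrm{std}})$ to $(Y,\xi)$; combining the nonvanishing results of Taubes for symplectic cobordisms with the identification of the Heegaard Floer and monopole Floer contact classes (through the equivalence of the two theories) identifies this relative invariant, up to a unit, with the contact class of $(Y,\xi)$, which is nonzero by hypothesis. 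Equivalently --- and this is the packaging I would try to make precise --- gluing $X$ to a suitable compact $4$--manifold $P$ with $\partial P=-Y$ and $\pi_1(P)$ normally generated by the image of $\pi_1(Y)$, the gluing formula for Seiberg--Witten invariants produces a closed, simply connected $4$--manifold $Z=P\cup_Y X$ with $b^+(Z)>1$ and $\SW_Z(\mathfrak{s})\neq 0$ for some $\Spin^c$ structure $\mathfrak{s}$; choosing $P$ amounts to realizing the nonzero contact class as pairing nontrivially with a Floer class carried by an honest $4$--manifold.

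\smallskip

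\noindent\textit{Knot surgery and conclusion.}
Let $K_k\subset S^3$ denote the $(2,2k+1)$--torus knot, which is fibered and whose Alexander polynomial has exactly $2k+1$ nonzero coefficients. Performing knot surgery along $T\subset\mathrm{int}(X)$ yields a $4$--manifold $X_{K_k}$ with $\partial X_{K_k}=Y$; since the operation is supported in the interior it preserves simple connectivity, the intersection form, and the contact structure on the boundary, and because $\omega$ restricts to a symplectic form on $X\setminus\nu T$ while a fibered $K_k$ supplies a matching symplectic form on $(S^3\setminus\nu K_k)\times S^1$, the manifold $X_{K_k}$ acquires a symplectic structure that is again a strong symplectic cap for $(Y,\xi)$. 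The relative Seiberg--Witten invariant of $X_{K_k}$ is that of $X$ multiplied by $\Delta_{K_k}(t^2)$ --- the relative version of the Fintushel--Stern formula, equivalent to $\SW_{P\cup_Y X_{K_k}}=\SW_{P\cup_Y X}\cdot\Delta_{K_k}(t^2)$ --- so the number of $\Spin^c$ structures on $X_{K_k}$ carrying a nonzero relative invariant grows without bound in $k$; since this number is a diffeomorphism invariant, the caps $X_{K_k}$ are pairwise non-diffeomorphic for infinitely many $k$. They are simply connected with a fixed intersection form, hence mutually homeomorphic by Freedman's theorem, so the $X_{K_k}$ realize infinitely many non-diffeomorphic smooth structures on a single compact, simply connected topological $4$--manifold with boundary $Y$, each supporting a symplectic structure that is a strong symplectic cap for $(Y,\xi)$. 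The case of $-Y$ follows by the same argument with $Y$ replaced by $-Y$ throughout.
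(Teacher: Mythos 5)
Your high-level plan matches the paper's: build a simply connected symplectic cap with an internal torus suitable for knot surgery, establish a nonvanishing relative gauge-theoretic invariant, then use the Fintushel--Stern formula to distinguish the knot-surgered caps. However, the two places you yourself flag as delicate are exactly where the argument is incomplete, and one of the suggested repairs is circular.

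\textbf{The nonvanishing step.} You propose two routes to $\SW^{\mathrm{rel}}_X\neq 0$. The second route --- find a compact $P$ with $\partial P=-Y$, $\pi_1(P)$ normally generated by the boundary, and glue to obtain a closed $Z$ with $\SW_Z\neq 0$ --- is effectively asking whether $(Y,\xi)$ admits a filling with nonvanishing closed-up invariant. That is not available under the hypothesis ``$c^+(\xi)\neq 0$'' alone (indeed whether $Y$ bounds any $4$--manifold with nonvanishing relative invariant is raised as an open question in the paper). You cannot assume such a $P$ exists; if you could, the theorem would be nearly immediate. The first route (``Taubes nonvanishing for symplectic cobordisms combined with HF$\,=\,$HM'') is closer to the truth but is not an established off-the-shelf statement: Taubes' nonvanishing applies to closed symplectic manifolds, and the extension to a symplectic \emph{cap} --- showing that the cap's relative invariant equals the contact class --- is precisely the content that needs a proof. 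The paper does this in Theorem~\ref{cap} entirely inside Heegaard Floer theory: it constructs the cap as a composition $X_1\cup X_2\cup X_3$ where $X_1\cup X_2$ is built from Weinstein $2$--handles and $X_3$ is Eliashberg's cap for a contact homology sphere, then chains Plamenevskaya's identity $\Phi_{X_3,\frs_0}=c^+(\xi'')$ through the Stein cobordism maps and the $\Spin^c$ composition law to get $\Phi_{X,\frs_0}=c^+(\xi)$. Without some such argument the nonvanishing remains an assertion, not a proof.

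\textbf{The homeomorphism step.} Appealing to Freedman to conclude that the $X_{K_k}$ are mutually homeomorphic is too quick: Freedman's classification is for closed simply connected $4$--manifolds, and here $\partial X=Y$ is an arbitrary $3$--manifold, not even a homology sphere. The paper instead invokes Boyer's theorem, which only gives \emph{finitely many} homeomorphism types of compact simply connected $4$--manifolds with fixed boundary and intersection form; this already suffices to extract an infinite homeomorphic subfamily, but to show \emph{all} the surgered manifolds are homeomorphic the paper does additional work (a corollary of Boyer for even forms with homology sphere boundary applied to the nucleus $N_2$, plus the fact that the mapping class group of $\partial N_2$ is $\Z/2$ and the nontrivial class extends over $N_2$).

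Finally, a smaller divergence: you distinguish the $X_{K_k}$ by counting basic classes, while the paper invokes Corollary~\ref{cor:main2} (via the relative concordance surgery formula, Theorem~\ref{thm:main1}, and Sunukjian's $\Gamma$-invariant), which handles the ambiguity coming from automorphisms of $H^2(X,\partial X)$ more carefully and gives absolute (not just rel-boundary) non-diffeomorphism. Your count-of-basic-classes argument is sound once the nonvanishing is in place, but it only becomes a complete argument after controlling how a potential diffeomorphism acts on $\Spin^c$ structures.
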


We have the similar theorem.
\begin{theorem}\label{thm:main4}
Let $Y$ be a closed, connected, oriented $3$--manifold. Suppose either $Y$ (or $-Y$) has a weak symplectic filling $(W, \omega)$. Then there exists a compact, simply connected $4$--manifold $X$ such that $\partial X=-Y$ (or $Y$) and $X$ admits infinitely many non-diffeomorphic smooth structures relative to the boundary, each of which admits a symplectic structure such that $W\cup X$ is a closed symplectic manifold.  In addition, X also admits infinitely many smooth structures such that $W\cup X$ has no symplectic structure. 
\end{theorem}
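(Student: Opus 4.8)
The plan is to produce $X$ as a weak symplectic cap for $(Y,\xi)$ that sits inside a closed symplectic $4$--manifold, to engineer a Fintushel--Stern knot--surgery torus inside $X$, and then to vary the knot; throughout we assume (the other case being symmetric) that $Y$ itself bounds a weak filling $(W,\omega_W)$ of a contact structure $\xi$. By Eliashberg's theorem that every weak symplectic filling embeds symplectically into a closed symplectic $4$--manifold, there is a closed symplectic $(Z^{(0)},\omega)$ containing $(W,\omega_W)$, and we may arrange (by including enough Lefschetz singularities in the cap during that construction) that $C_0:=Z^{(0)}\setminus\operatorname{int}W$ is simply connected; it is a weak symplectic cap for $(Y,\xi)$. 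Next I would perform symplectic modifications supported in $\operatorname{int}C_0$ (hence not touching a collar of $Y$): symplectic fiber sums matched to a regular fiber of the Lefschetz fibration underlying the cap -- for instance with $\Sigma_g\times T^2$ and with minimal elliptic surfaces -- followed by Luttinger surgeries along the (abundant) Lagrangian product tori in those building blocks. The goal is a weak symplectic cap $X$ for $(Y,\xi)$, obtained from $C_0$ by these interior operations, such that (i) $X$ is simply connected, (ii) $Z':=W\cup_Y X$ (still closed symplectic, and equal to the corresponding modification of $Z^{(0)}$) has $b_2^+\ge 2$, and (iii) $X$ contains an embedded symplectic torus $T$ of square $0$ with $\pi_1(X\setminus T)=1$.

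With $X$ and $T$ in hand, for a knot $K\subset S^3$ let $X_K$ be the result of Fintushel--Stern knot surgery on $T$. Since $T$ and the surgery region lie in $\operatorname{int}X$ away from $Y$, we have $\partial X_K=Y$, and near $Y$ nothing changes, so $(Y,\xi)$ is still a weakly concave boundary of $X_K$; because $\pi_1(X\setminus T)=1$, knot surgery preserves $\pi_1$, the intersection form, and the Kirby--Siebenmann invariant, so by Boyer's relative version of Freedman's theorem $X_K$ is homeomorphic rel $\partial$ to $X$. Setting $Z'_K:=W\cup_Y X_K$, this is exactly knot surgery on $Z'$ along $T$; the meridian of $T$ is nullhomotopic in $X\setminus T$, hence in $Z'\setminus T$, so the Fintushel--Stern gluing formula applies and gives $SW_{Z'_K}=SW_{Z'}\cdot\Delta_K(t^2)$, with $SW_{Z'}\ne 0$ by Taubes (as $Z'$ is closed symplectic with $b_2^+\ge 2$). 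If $X_{K}\cong X_{K'}$ rel $\partial$ then $Z'_{K}\cong Z'_{K'}$, so distinct Alexander polynomials force distinct smooth structures on $X$ rel $\partial$.

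The two families now come from two choices of knots. Taking $K$ over an infinite family of \emph{fibered} knots with pairwise distinct Alexander polynomials (e.g. the $(2,2n+1)$--torus knots), Fintushel--Stern's symplectic knot surgery makes each $X_K$ symplectic with the same weakly concave boundary, i.e. a weak symplectic cap for $(Y,\xi)$, and by the previous paragraph these are pairwise non-diffeomorphic rel $\partial$: this is the first family. For the second, take $K$ over an infinite family of \emph{non-fibered} knots with pairwise distinct Alexander polynomials whose leading coefficients have absolute value $\ge 2$ (e.g. a suitable family of twist knots); again the $X_K$ are homeomorphic rel $\partial$ to $X$ and pairwise non-diffeomorphic rel $\partial$. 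If some such $X_K$ admitted a symplectic structure realizing $(Y,\xi)$ as a weakly concave boundary, then gluing it to $W$ by Eliashberg's gluing theorem for weak structures (after normalizing the cohomology classes along $Y$) would produce a closed symplectic $4$--manifold diffeomorphic to $Z'_K$, whose Seiberg--Witten function would then have to be of symplectic type -- in particular have an extremal coefficient $\pm 1$ -- contradicting $|\text{lead}\,\Delta_K|\ge 2$ together with $SW_{Z'}\ne 0$. Hence the members of the second family admit no weak symplectic cap for $(Y,\xi)$.

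The main obstacle is the construction in the first paragraph: starting from the essentially uncontrolled cap $C_0$ handed to us by the embedding theorem, one must, using only symplectic surgery supported away from the contact boundary, produce a \emph{simply connected} manifold that nevertheless carries a square--zero symplectic torus with simply connected complement and has $b_2^+\ge 2$. This is the ``reverse engineering'' core of the argument: the fiber--sum building blocks must be chosen so that the Luttinger tori needed to kill $\pi_1$ and the knot--surgery torus coexist, and one must carry out the Van Kampen and Mayer--Vietoris bookkeeping to verify (i)--(iii). Secondary points requiring care are the exact form of the Fintushel--Stern gluing formula when $Z'$ is not simply connected (only the meridian of $T$ need die in $\pi_1(Z'\setminus T)$, which holds here), the cohomological compatibility needed in the gluing step for the second family, and keeping track of orientations in the ``$-Y$'' case.
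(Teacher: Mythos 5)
Your overall strategy -- embed $(Y,\xi)$'s weak filling $W$ into a closed symplectic $4$--manifold, arrange that the complementary cap $X$ is simply connected and contains a square--zero symplectic torus $T$ with simply connected complement, do Fintushel--Stern knot surgery on $T$, distinguish the results by the invariants of the closed glued--up manifold, and split the knots into fibered vs.\ non-fibered with non-monic Alexander polynomial -- is the same strategy the paper uses. The substantive divergences are two. First, you use Seiberg--Witten invariants and the Fintushel--Stern product formula, whereas the paper uses the Ozsv\'ath--Szab\'o mixed invariant and its concordance surgery formula (Theorem~\ref{consurg}); for the closed manifold $Z'=W\cup_Y X$ this is essentially a matter of taste, although the paper's choice lets it quote Corollary~\ref{cor:main2} directly and avoids having to argue separately about the non--simply connected version of the SW gluing formula. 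Second, and more importantly, your cap construction is genuinely different from the paper's and is the place where you yourself flag an unresolved obstacle.

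On that second point: you propose to start from Eliashberg's theorem that a weak filling embeds into some closed symplectic $4$--manifold and then repair the resulting uncontrolled cap $C_0$ by fiber--summing with $\Sigma_g\times T^2$ or elliptic surfaces and applying Luttinger surgeries to kill $\pi_1$ while keeping a good knot--surgery torus. As stated this is a plan, not a proof; fiber sum generically kills simple connectivity rather than creating it, and there is no a priori Lefschetz pencil structure on $C_0$ against which to match fibers, so the ``reverse engineering'' you describe would require a nontrivial argument that is not supplied. The paper sidesteps all of this with a much more concrete route: first attach Weinstein $2$--handles (working with an open book, exactly as in Step~2 of the proof of Theorem~\ref{cap}) to turn the weakly convex boundary into a rational homology sphere $Y''$; since $Y''$ is a rational homology sphere, Ohta--Ono lets one deform the weak filling near $\partial$ into a strong filling; then Eliashberg's explicit cap for a strongly fillable open book, built from a Lefschetz fibration over $D^2$, gives a simply connected cap with $b_2^+\ge 2$ that visibly contains a Gompf nucleus/cusp neighborhood with a symplectic fiber torus of square zero and simply connected complement. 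That two--step (homology sphere $\Rightarrow$ Ohta--Ono $\Rightarrow$ Eliashberg strong cap) is the key idea missing from your write-up; without it, or an equally explicit substitute, the first paragraph of your proposal does not close. Everything after that paragraph is sound and parallels the paper, including the observation that a rel--$\partial$ diffeomorphism of caps would extend to the closed manifolds, and the use of non-monic Alexander polynomials to rule out symplectic caps via Taubes.

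One smaller caution: you assert that each $X_K$ is homeomorphic rel $\partial$ to $X$ by ``Boyer's relative version of Freedman's theorem.'' Boyer's Theorem~\ref{boyer}, as quoted in the paper, gives only finitely many homeomorphism types with given boundary and intersection form, so a priori one only gets an infinite homeomorphic subfamily (which is what the paper settles for in the proof of Theorem~\ref{thm:main4}); upgrading this to ``all homeomorphic rel $\partial$'' requires the nucleus trick the paper uses in the proof of Theorem~\ref{thm:main3}, and you should either invoke that or weaken the claim.
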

\begin{remark}
Recall if the contact manifold $(Y,\xi)$ admits a strong symplectic filling, then Ozsv\'ath and Szab\'o \cite{OS04c} showed that $c^+(\xi)\not=0$ (see also \cite[Theorem~2.13]{Ghiggini06}), but if it just admits a weak filling it can happen that $c^+(\xi)=0$. So the two theorems above cover different situations. In \cite{OS04c}, Ozsv\'ath and Szab\'o also showed $c^+(\xi;[\omega])\neq0$ for a weakly fillable contact structure if one uses twisted coefficients, but unfortunately it is not clear how to extend Theorem~\ref{thm:main3} to this case. 
\end{remark}
\begin{remark}
Theorem~\ref{thm:main4} was proven by Yasui \cite{Yasui11} for strongly fillable $3$--manifolds and also for weak symplectic fillable manifolds if one does not require that $X$ be simply connected. Though in these cases he proved the stronger theorem that there were infinitely many smooth structures that were not related by any diffeomorphism. 
\end{remark}

\begin{remark}
In Theorem~\ref{thm:main4} the smooth structures are only shown to be exotic by a diffeomorphism that is the identity on the boundary. It would also be interesting to know the answer when $Y$ bounds an $X$ with infinitely many smooth structures that are not diffeomorphic by any diffeomorphism, we call these absolutely exotic structures. In Theorem~\ref{thm:main3} the smooth structures are absolutely exotic, while in Theorem~\ref{thm:main4} they are not. However, in many cases it is easy to see that they are also absolutely exotic. In particular, notice that if $Y$ has a finite number of diffeomorphisms up to isotopy, then an infinite subset of the smooth structures in Theorem~\ref{thm:main4} must be absolutely exotic. It is well-known that lens spaces \cite{Bonahon83}, hyperbolic manifolds \cite{GMT03}, and many other $3$--manifolds have finite mapping class groups. So for these manifolds one may remove ``rel boundary" from Theorem~\ref{thm:main4}.
\end{remark}

We can also say something for $3$--manifolds that embed in definite $4$--manifolds.
\begin{theorem}\label{thm:main5}
    Let $Y$ be an oriented rational homology $3$--sphere. Suppose $Y$ embeds as a separating hypersurface in a closed definite $4$--manifold. Then there exists a compact $4$--manifold $X$ with trivial first Betti number such that $\partial X=Y$ and $X$ admits infinitely many non-diffeomorphic smooth structures.
\end{theorem}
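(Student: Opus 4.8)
My strategy for Theorem~\ref{thm:main5} is to reduce it to Theorem~\ref{thm:main3} or to a Floer-theoretic obstruction argument in the spirit of Yasui's work and the previous theorems of this paper. The hypothesis gives a closed definite $4$--manifold $Z$ with $Y \subset Z$ separating, so $Z = Z_- \cup_Y Z_+$ where one side, say $Z_+$, inherits a definite (say negative-definite) intersection form and has $b_1$ controlled; by passing to $-Y$ if necessary and absorbing the other piece, we may assume $Y$ bounds a negative-definite $4$--manifold $W$ with $b_1(W) = 0$. The plan is to produce $X$ from $W$ by a ``log transform / knot surgery'' or ``Fintushel--Stern-style'' modification supported in a cusp neighborhood, exactly as in the proof of the earlier theorems, so that the family $\{X_n\}$ differs by an operation that does not change the homeomorphism type but is detected by a Seiberg--Witten-type invariant computed relative to the negative-definite filling.

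\textbf{Key steps, in order.}
First, I would normalize the topology: using the separating hypersurface in the closed definite manifold, extract a compact $W$ with $\partial W = Y$ (or $-Y$), $b_1(W) = 0$, and definite intersection form. Second, I would arrange inside $W$ (after blow-ups, which preserve definiteness and $b_1$) an embedded cusp neighborhood or, more flexibly, a suitable embedded torus of square zero or a Mazur-type pattern on which to perform the exotic operation; this is the step that imports the construction technology from the proof of Theorem~\ref{thm:main3}/\ref{thm:main4}. Third, I would apply the chosen surgery operation (e.g., Fintushel--Stern knot surgery along the torus with a family of knots, or repeated log transforms) to build $X = X_1, X_2, X_3, \dots$, all homeomorphic rel boundary and all with $b_1 = 0$. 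Fourth, I would distinguish them smoothly: because $W$ is negative-definite with $b_1 = 0$, the relative Seiberg--Witten invariants (or the $\mathrm{Spin}^c$-refined version, paired against the Ozsv\'ath--Szab\'o $4$--manifold invariant of the closed-up pieces, or Yasui's adjunction/genus obstruction) are well-defined and change under the operation, yielding infinitely many diffeomorphism types.

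\textbf{The main obstacle.} The delicate point is that, unlike in Theorems~\ref{thm:main3} and~\ref{thm:main4}, we are not handed a contact structure with non-vanishing invariant or a symplectic filling, so there is no ready-made non-vanishing Floer class to ``seed'' the distinguishing argument. The real work is to show that embedding in a \emph{definite} closed manifold is enough structure: one must leverage Donaldson's diagonalization theorem (or the $d$-invariant / correction-term obstructions from Heegaard Floer homology) to constrain the intersection form of the filling tightly enough that the surgered manifolds cannot all be diffeomorphic — essentially, the definite closing-up of $X_n$ would have to remain definite, and a Seiberg--Witten or Heegaard-Floer count on that closed definite manifold gives the contradiction if the operation were trivial. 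Making this obstruction non-vanishing for infinitely many members of the family — rather than just two — is where care is needed; I expect one uses a knot-surgery family indexed by Alexander polynomials of increasing complexity, as in Fintushel--Stern, with the relative invariant detecting the polynomial. Controlling simple-connectivity (or settling for $b_1 = 0$, as the statement does) is then a routine Mayer--Vietoris/van Kampen check.
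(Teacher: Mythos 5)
Your proposal localizes the knot surgery inside the definite piece $W$ and then tries to distinguish the results with a relative invariant computed on $W$, but both halves of this plan fail, and the paper's actual argument is structured around avoiding exactly these two problems.

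First, a negative-definite $W$ has $b_2^+(W)=0$, so the mixed map / relative Ozsv\'ath--Szab\'o invariant (or relative Seiberg--Witten invariant) of $W$ is simply not defined --- the definition requires $b_2^+>1$. Your sentence ``because $W$ is negative-definite with $b_1=0$, the relative Seiberg--Witten invariants $\ldots$ are well-defined'' is backwards; definiteness is the obstruction to defining them, not the enabling hypothesis. Similarly, your fallback --- closing $X_n$ up to a definite closed manifold and running a Seiberg--Witten count there --- cannot detect anything: closed definite manifolds have no Seiberg--Witten basic classes, and this vanishing is unconditional, so it cannot distinguish the $X_n$. Second, you propose to ``arrange inside $W$ (after blow-ups $\ldots$) an embedded cusp neighborhood or $\ldots$ embedded torus of square zero'' on which to do the surgery; blow-ups only supply $(-1)$--spheres and there is no reason a generic definite filling contains a cusp neighborhood or a homologically essential square-zero torus. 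In Theorems~\ref{thm:main3} and~\ref{thm:main4} the cusp is manufactured in the \emph{cap}, not found in a given filling, and the same must happen here.

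The paper's proof sidesteps both issues by an idea absent from your outline: it never does surgery in $W$ and never computes an invariant of $W$ alone. Instead, one passes to the punctured cobordism $W^\circ\colon S^3\to S^3$, uses the Ozsv\'ath--Szab\'o theorem that $F^\infty_{W^\circ,\frs}$ is an isomorphism on a negative-definite cobordism together with Donaldson diagonalization (to find a characteristic $\frs_0$ with $c_1^2(\frs_0)+n=0$, so the grading shift is zero) to conclude $F^+_{W_1,\frs_0}(\Theta^+)\neq 0$ in $HF^+(Y)$. One then glues a punctured $K3$ onto the $S^3$ end of $W_1$ to form $X$. The $K3$ simultaneously supplies $b_2^+>1$, a cusp neighborhood with a symplectic torus on which to do the Fintushel--Stern knot surgeries, and --- because $K3$ is symplectic --- a nonzero mixed-map ``seed'' $\Theta^-\mapsto\Theta^+$ that composes with the surjectivity of $F^+_{W_1,\frs_0}$ to give $\Phi_{X;\b}\neq 0$. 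Corollary~\ref{cor:main2} then finishes. Your draft correctly identifies that ``no ready-made Floer class to seed the argument'' is the real difficulty, but the resolution is not a contradiction argument via Donaldson's theorem applied to a definite closure; it is the positive construction of a seed by attaching $K3$ and using Donaldson's theorem only to show $W_1$ does not kill it.
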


We will call a $4$--manifold $X$ with boundary $Y$ a filling of $Y$ and we will call a different smooth structure on $X$ an exotic filling, or an exotic smooth structure on the filling. So the above theorems can be stated as any closed oriented $3$--manifold satisfying the various hypothesis admits a filling with infinitely many exotic smooth structures (and in the case of the first two theorems the fillings can be taken to be simply connected). 
\begin{example}
We note that $P\# -P$, where $P$ is the Poincare homology sphere, does not admit a tight structure and hence Theorems~\ref{thm:main3} and~\ref{thm:main4} do not apply. But $P\# -P$ bounds $\overline{(P-B^3)} \times [0,1]$ which is a homology $4$--ball. Thus $P\# -P$ embeds in the double of the homology $4$--ball which is a homology $4$--sphere, and so it has a filling with infinitely many smooth structures by Theorem~\ref{thm:main5}. Similarly, any rational homology sphere that bounds a rational homology ball satisfies the hypothesis of Theorem~\ref{thm:main5}.  In particular, for any rational homology sphere $Y$, Theorem~\ref{thm:main5} applies to $Y\# -Y$. These examples were also constructed by Yasui \cite{Yasui11}.  
\end{example}

\begin{remark}
An obvious way to try to construct an infinite number of smooth structures on $X$ would be to start with one and form the connected sum with exotic smooth structures on a closed manifold. This may not produce distinct smooth structures. For example, if one chose $X$ to have many $S^2\times S^2$ summands, then connect summing $X$ with many families of exotic smooth manifolds will produce diffeomorphic manifolds by a result of Wall \cite{Wall64}. For a well-chosen $X$ it is likely that this construction will produce exotic fillings of $Y$, but proving they are different could be difficult as the Ozsv\'ath--Szab\'o and Seiberg--Witten invariants frequently vanish under connected sum. 
\end{remark}

\begin{remark}
We note that the above theorems guarantee that the following manifolds have a filling with infinitely many exotic smooth structures (though maybe not be absolutely exotic):
\begin{enumerate}
\item Any Seifert fibered space by \cite{Gompf98} and Theorem~\ref{thm:main3}.
\item Any $3$--manifold admitting a taut foliation by \cite{EliashbergThurston98, Gabai83, KazezRoberts19} and Theorem~\ref{thm:main4}.
\item Any irreducible $3$--manifold with positive first Betti number (this is a special case of the previous item).
\item Any rational homology $3$--sphere embedding in a closed definite manifold by Theorem~\ref{thm:main5}. 
\end{enumerate}
So the only irreducible $3$--manifolds not known to admit exotic fillings are hyperbolic homology spheres and toroidal homology spheres that do not embed in a definite $4$--manifold; and many of them are also know to have exotic fillings from the above theorems. In particular, we do not know any example of an irreducible $3$--manifold that does not satisfy the hypothesis of one of the theorems above. 

Moreover, if the $L$-space conjecture is true (at least the implication that an irreducible manifold that is not an $L$-space admits a taut foliation), then the only irreducible $3$--manifolds not known to admit an exotic filling will be $L$-space homology spheres. 
\end{remark}

%In a pervious attempt by the authors to show that all closed oriented $3$--manifolds bound a simply connected manifold with infinitely many smooth structures we tried to embed such a manifold in a closed symplectic manifold. While our construction did not work, as pointed out by Gompf, it does lead to an interesting question which we state as a conjecture.
%\begin{conjecture}
%Any closed oriented $3$--manifold admits a smooth embedding into a symplectic $4$--manifold. 
%\end{conjecture}
%\begin{remark}
%In the paper \cite{Mukerjee20pre}, the third author showed that if an $L$-space has such an embedding then it bounds a definite manifold. We conjecture that this is sufficient to ensure a manifold admits a filling with infinitely many smooth structures. %So if the above conjecture is true then all $L$-spaces will have a simply connected filling with infinitely many smooth structures.
%\end{remark}

Prior to this work there were several works addressing Question~\ref{q1} (and hence Question~\ref{q2}) in specific cases. We first note that since any diffeomorphism of $S^3$ extends over $B^4$, one can use all the past work on closed manifolds to show that $S^3$ has many fillings with infinitely many exotic smooth structures. Similarly, one can show that diffeomorphisms of circle bundles over surfaces can be extended over the disk bundles that they bound. Thus by finding embedded surfaces in the above mentioned closed manifolds with infinitely many smooth structures, one can remove neighborhoods of these surfaces to show that some circle bundles over surfaces have fillings with infinitely many exotic smooth structures. 

Moving beyond these obvious examples, the first result concerning Question~\ref{q1} is due to Gompf, \cite{Gompf91}. He showed that ``nuclei" $N_n$ of elliptic surfaces have infinitely many smooth structures. The manifold $N_n$ is a simply connected $4$--manifold with second homology of rank 2 and boundary the Brieskorn homology sphere $\Sigma(2,3,6n-1)$. Thus Question~\ref{q2} is answered for this family of $3$--manifolds. Later Yasui used these nuclei to give a general procedure to create exotic fillings of $3$--manifolds, \cite{Yasui11} see also \cite{Yasui14}. For example, he has shown that any symplectically fillable connected $3$--manifold 
bounds a compact connected oriented smooth $4$--manifold admitting infinitely many absolutely exotic smooth structures, as does the disjoint union of manifolds admitting Stein fillable contact structures. In fact proof of Theorem~\ref{thm:main4} can be seen as a corollary of the main theorem in \cite{Yasui11}.

In \cite{AEMS08}, Akhmedov, Mark, Smith and the first author gave explicit examples of circle bundles over surfaces with infinitely many exotic smooth fillings and also showed that all of these also have Stein structures. In particular, it was shown that there are contact $3$--manifolds that admit a filling by a $4$--manifold that has infinitely many smooth structures and each smooth structure has a Stein structure that fills the contact structure; that is, these contact manifolds admit infinitely many exotic Stein fillings.  This work was extended by Akhmedov and Ozbagci in \cite{AO14} to give families of Seifert fibered spaces with infinitely many exotic fillings (and exotic Stein fillings). More recently, Akbulut and Yasui \cite{AY14} gave an infinite family of $4$--manifolds with boundary each of which is simply connected, has second Betti number $b_2=2$, and admits infinitely many distinct smooth structures (each of which is also Stein). The fillings constructed in Theorems~\ref{thm:main3}, \ref{thm:main4}, and~\ref{thm:main5} are large in the sense that in general they have large $b_2$. In the closed case it is quite interesting to try to find the simply connected $4$--manifolds with the smallest $b_2$ that have infinitely many smooth structures. This leads us to naturally ask the following question.
\begin{question}
Given a closed oriented $3$--manifold $Y$, what is the minimal second Betti number of a filling of $Y$ that admits infinitely many exotic smooth structures?
\end{question}

We also note that in \cite{AY13}, Akbulut and Yasui show that many $3$--manifolds (in particular ones realizing all possible homologies for a $3$--manifold) admit fillings with any arbitrarily large, but finite number of smooth structures (that also admit Stein structures).

\subsection{Concordance surgery and Ozsv\'ath--Szab\'o invariants}
The proof of the theorems in the previous section will rely on a construction called concordance surgery and the effect of this surgery on the Heegaard Floer mixed map. 

\dfn{Concordance surgery} is a generalization of Fintushel-Stern's knot surgery \cite{FS98}; see \cite{Akbulut02, JZ18, Tange05}. Let $X$ be a $4$--manifold and $T$ a torus embedded in $X$ with trivial normal bundle. Thus, $T$ has a neighborhood $N_T = T\times D^2$. Let $K$ be a knot in an integer homology $3$--sphere $M$ and $\cC = (I\times M, A)$ a self-concordance from $K$ to itself. After gluing the ends of $A$ together, we obtain an embedded torus $T_\cC$ in $S^1\times M$. Let $W_\cC$ be the complement of a neighborhood of $T_\cC$. Choose an orientation reversing diffeomorphism $\phi:\partial (X\setminus N_T)\rightarrow \partial W_\cC$ sending $\partial D^2$ in $N_T$ to the longitude for $K$ in $Y\setminus N_K$. Now we may glue $X\setminus N_T$ and $W_\cC$ together and obtain
\[
    X_\cC := (X\setminus N_T)\cup_\phi W_\cC.
\]
Knot surgery is simply the case of concordance surgery when $M=S^3$ and $A=K\times [0,1]$. 

Fintushel and Stern \cite{FS98} showed how knot surgery affects the Seiberg--Witten invariants of a $4$--manifold (under suitable hypothesis), but it was a long-standing question how concordance surgery affects the Seiberg--Witten invariants. In \cite{JZ18}, Juh\'{a}sz and Zemke computed the effects of concordance surgery on the Ozsv\'{a}th--Szab\'{o} invariants of a closed $4$--manifold, which are conjecturally same as the Seiberg--Witten invariants.

In this paper, we will show a similar result for $4$--manifolds with connected boundary. (We can prove our main results using just knot surgery, but as formulas for the effect of knot and concordance surgery on the Heegaard-Floer invariants of manifolds with boundary do not exist and their proofs are almost the same, we prove the results for the more general surgery.) To state the result, we need to define the Ozsv\'{a}th--Szab\'{o} polynomial for $4$--manifolds with boundary; see also \cite{JM08, JZ18} for closed $4$--manifolds. Suppose $X$ is a smooth, compact, oriented $4$--manifold with connected boundary $Y$ and $b^+_2(X)>1$. Generalizing the Ozsv\'{a}th--Szab\'{o} invariant from closed manifolds to manifolds with non-empty boundary we say the Ozsv\'{a}th--Szab\'{o} invariant is a map
\[
    \Phi_X:\Spin^c(X)\rightarrow HF^+(Y).
\]
This is an invariant of $X$ up to automorphisms of $\Spin^c(X)$ and $HF^+(Y)$.
Write $\Phi_{X,\frs}$ for $\Phi_X(\frs)$, which is the image of the bottom-graded generator of $HF^-(S^3)$ under the mixed map associated to the cobordism $\overline{X-B^4}$ thought of as a cobordism from $S^3$ to $Y$. For more details, see Section~\ref{mixed}. Suppose $\b=(b_1, \ldots , b_n)$ is a basis of $H^2(X,\partial{X};\R)$ and $\frs_0$ is a fixed $\Spin^c$ structure on $X$. We define the Ozsv\'{a}th--Szab\'{o} polynomial for $X$ as follows.

\[
    \Phi_{X;\b} := \sum_{\frs \in \Spin^c(X)} z_1^{\langle i_*(\frs-\frs_0)\cup b_1,[X,\partial X] \rangle}\cdots z_n^{\langle i_*(\frs-\frs_0)\cup b_n,[X,\partial X] \rangle} \cdot \Phi_{X,\mathfrak{s}},
\]
which is an element in $\F_2[\Z^n]\otimes_{\F_2} HF^+(Y)$, where $i_*:H^2(X;\Z)\rightarrow H^2(X;\R)$ is a map induced by $i:\Z\rightarrow\R$. If $H^2(X)$ is torsion-free, $\Phi_{X;\b}$ completely encodes $\Phi_{X}$ as in the closed case.

Let $K$ be a knot in a homology sphere $M$ and $\cC$ be a self-concordance of $K$. In \cite{JZ18}, Juh\'{a}sz and Zemke defined the \dfn{graded Lefschetz number of $\cC$} as follows.
\[
    \Lef_z(\cC) := \sum_{i\in\Z}\Lef\left(\widehat{F}_\cC|_{\widehat{HFK}(M,K,i)}:\widehat{HFK}(M,K,i)\rightarrow\widehat{HFK}(M,K,i)\right)\cdot z^i,
\]
where $\widehat{F}_\cC:\widehat{HFK}(M,K)\rightarrow\widehat{HFK}(M,K)$ is a concordance map defined by Juh\'{a}sz in \cite{Juhasz16}, which preserves the Alexander and Maslov gradings. Notice that if $\cC$ is a product concordance, then $\widehat{F_\cC}$ is the identity, so $\Lef_z(\cC)$ is the Alexander polynomial $\Delta_K(z)$. Our main result about knot concordance is the following result that generalizes Juh\'{a}sz and Zemke's result above from closed manifolds to manifolds with boundary. 

\begin{theorem}\label{thm:main1}
    Let $X$ be a smooth, compact, oriented $4$--manifold with connected boundary $Y=\partial X$ such that $b_2^+(X) \geq 2$. Suppose $T$ is an embedded torus in $X$ with trivial normal bundle such that $[T]\neq0\in H_2(X;\R)$ and $\b=(b_1,\ldots,b_n)$ is a basis of $H^2(X,\partial X;\R)$ such that $\langle[T],b_1\rangle=1$ and $\langle[T],b_i\rangle=0$ for $i>1$. Then
    \[
        \Phi_{X_\cC;\b} = \Lef_{z_1}(\cC) \cdot \Phi_{X;\b}.
    \]
\end{theorem}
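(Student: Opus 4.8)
The plan is to reduce the statement to a purely local computation about the torus neighbourhood, which is exactly the computation Juh\'asz and Zemke carry out in \cite{JZ18}; the point is that concordance surgery is supported near $T$ and that the hypothesis $b_2^+(X)\ge 2$ leaves enough room to evaluate the mixed map along a convenient cut. Throughout, write $F^{\pm}$ for the maps a cobordism induces on $HF^{\pm}$, $F^{\mathrm{mix}}$ for the mixed map, and $\overline F_{V,\frt}$ for the pairing of a cobordism map against the relative invariant of a filling $V$ of its incoming end in the $\Spin^c$ structure $\frt$.

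First I would remove a ball from $X$ disjoint from $T$, so that $W:=X\setminus B^4$ is a cobordism $S^3\to Y$ with $b_2^+(W)\ge 2$, and denote by $\widetilde W:=X_\cC\setminus B^4$ the result of concordance surgery. Since $b_2^+(X)\ge 2$ and $[T]^2=0$, there is an admissible cut along a closed $3$--manifold $N$ with $W=A\cup_N B$, $A\colon S^3\to N$, $B\colon N\to Y$, $b_2^+(A)\ge 1$, $b_2^+(B)\ge 1$, and $N_T$ in the interior of $B$. Concordance surgery then modifies only $B$, replacing it by $B_\cC:=(B\setminus N_T)\cup_\phi W_\cC$, so $\widetilde W=A\cup_N B_\cC$; as $b_2^+(W_\cC)=b_2^+(N_T)=0$, concordance surgery preserves $b_2^+$ and the relevant part of $H^2$, so $N$ is still an admissible cut for $\widetilde W$. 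Because $A$, hence $F^-_A$ and the reduction isomorphism $HF^-_{\mathrm{red}}(N)\cong HF^+_{\mathrm{red}}(N)$ used to build $F^{\mathrm{mix}}$, are untouched, the definition of the mixed map gives $\Phi_{X,\frs}=F^+_{B,\frs}(v)$ and $\Phi_{X_\cC,\frs'}=F^+_{B_\cC,\frs'}(v)$ for one and the same $v\in HF^+_{\mathrm{red}}(N)$ (the image of the bottom--graded generator of $HF^-(S^3)$), under the identification $\Spin^c(X)\cong\Spin^c(X_\cC)$. Thus it suffices to prove that
\[
\Phi_{X;\b}=\sum_{\frs}\bigl(\textstyle\prod_{j}z_j^{\langle i_*(\frs-\frs_0)\cup b_j,[X,\partial X]\rangle}\bigr)F^+_{B,\frs}(v),
\]
and the corresponding expression for $\Phi_{X_\cC;\b}$ built from $B_\cC$, differ exactly by the factor $\Lef_{z_1}(\cC)$.

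Next I would cut $B$ along $T^3=\partial N_T$, writing $B=B_0\cup_{T^3}N_T$ with $B_0\colon N\to Y\sqcup T^3$; then $B_\cC=B_0\cup_\phi W_\cC$ with the \emph{same} $B_0$, the two differing only in how the $T^3$--end is capped (trivially for $N_T$, via $\phi$ for $W_\cC$). Since $b_2^+(N_T)=b_2^+(W_\cC)=0$, the composition law for $HF^+$--cobordism maps applies with no vanishing, giving $F^+_{B,\frs}=\overline F_{N_T,\frs}\circ F^+_{B_0,\frs}$ and $F^+_{B_\cC,\frs'}=\overline F_{W_\cC,\frs'}\circ F^+_{B_0,\frs'}$. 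With $F^+_{B_0}$ common, and with $\langle[T],b_i\rangle=0$ for $i>1$ forcing the monomials in $z_2,\dots,z_n$ to agree under the correspondence $\frs\leftrightarrow\frs'$, the theorem reduces to the local identity
\[
\sum_{\frt}z_1^{\,q(\frt)}\,\overline F_{W_\cC,\frt}\ =\ \Lef_{z_1}(\cC)\cdot\sum_{\frt}z_1^{\,q(\frt)}\,\overline F_{N_T,\frt}
\]
between the $z_1$--weighted relative invariants of the two fillings $W_\cC$ and $N_T$ of $T^3$, where $q(\frt)$ records the evaluation of $\frt$ against $[T]=[T_\cC]$. This identity sees only the knot $K\subset M$, the self--concordance $\cC$, and the two standard fillings of $T^3$, and it is precisely the computation behind the closed--manifold formula of \cite{JZ18}: cutting $S^1\times M$ along $T_\cC$ expresses $\overline F_{W_\cC}$ through the mapping torus of the concordance map $\widehat{F}_\cC$ on $\widehat{HFK}(M,K)$, the coefficient of $z_1^{\,i}$ being the Lefschetz number of $\widehat{F}_\cC$ on $\widehat{HFK}(M,K,i)$, that is the $z^i$--coefficient of $\Lef_z(\cC)$, while for $N_T=T^2\times D^2$ the same recipe yields the single term in $z_1$--degree $0$ (the normalisation ``$\Delta_{\mathrm{unknot}}=1$''). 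Feeding this back through the two reductions gives $\Phi_{X_\cC;\b}=\Lef_{z_1}(\cC)\cdot\Phi_{X;\b}$.

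The genuinely four--dimensional input is therefore minimal --- it is the local identity above, quotable from \cite{JZ18} --- and the main obstacle is the bookkeeping in the two reductions: checking that an admissible cut exists with $b_2^+\ge 1$ on both sides and $T$ contained in one side (this is where $b_2^+(X)\ge 2$ enters), that concordance surgery preserves $b_2^+$ and the part of $H^2$ needed for the \emph{same} cut to compute the mixed invariant of $X_\cC$ (so that $F^-_A$ and the reduction isomorphism are genuinely reused), and --- most delicately --- that the correspondence of $\Spin^c$ structures and of homology classes induced by $\phi$ matches the variable $z_1$ on $X_\cC$ with the $[T]$--pairing, so that the local generating--function identity reassembles into the stated identity in $z_1,\dots,z_n$. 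An alternative, avoiding the reductions altogether, would be to re--run the Juh\'asz--Zemke argument verbatim with $Y$ carried through as an inert boundary component, systematically replacing their closed $4$--manifold invariants by the mixed maps landing in $HF^+(Y)$.
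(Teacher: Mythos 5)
Your overall strategy — reduce to an admissible cut, isolate the torus neighbourhood, apply the Juh\'asz--Zemke local computation, reassemble — is the right idea, but you have placed the torus on the \emph{wrong} side of the cut, and this creates a genuine gap in the gluing step. You put $N_T$ in the piece $B\colon N\to Y$. The removed ball (needed to view $X\setminus B^4$ as a cobordism $S^3\to Y$) must lie in $A$, so $N_T\subset B$ has only the $T^3$ boundary and $B_0=B\setminus \operatorname{int}(N_T)$ has \emph{three} boundary components $N$, $T^3$, $Y$. The composition law you invoke (Lemma~\ref{prop:composition} of the paper, Ozsv\'ath--Szab\'o's Theorem~3.4) is stated for a decomposition $W=W_1\cup_N W_2$ of a cobordism between connected ends; it does not directly cover ``glue a filling onto one boundary component of a $3$--ended cobordism.'' Your formula $F^+_{B,\frs}=\overline F_{N_T,\frs}\circ F^+_{B_0,\frs}$ therefore does not follow from the cited lemma, and the notation $\overline F_{N_T,\frs}$ (a map? a pairing?) is not well--defined as you have set things up. Making it rigorous would require either a more general TQFT gluing statement for multi--boundary cobordisms, or moving the ball into $N_T$ — but that forces $T$ onto the $S^3$ side of the cut, which is exactly the paper's choice.

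The paper's proof of Theorem~\ref{thm:main1} instead chooses the cut $N$ so that $T$ lies in $W_1\colon S^3\to N$ and the boundary $Y$ lies in $W_2\colon N\to Y$, and removes the ball from inside $N_T$. Then $W_0=T^2\times D^2\setminus B^4$ is an honest cobordism $S^3\to T^3$, $W=W_1\setminus\operatorname{int}(W_0)$ is an honest cobordism $T^3\to N$, and the composition law applies verbatim: $F^-_{W_1}\doteq F^-_W\circ F^-_{W_0}$. The price is that one needs the effect of concordance surgery on the \emph{minus} map $F^-_{W_\cC}\colon HF^-(S^3)\to HF^-(T^3)$, whereas Juh\'asz--Zemke compute the \emph{plus} map $F^+_{W_\cC}\colon HF^+(-T^3)\to HF^+(S^3)$ (Proposition~\ref{prop:plus-map}). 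The paper derives the minus version (Proposition~\ref{prop:minus-map}) by duality, using the non--degenerate Jabuka--Mark pairing (Theorems~\ref{thm:pairing} and~\ref{thm:duality}) together with the $z\mapsto z^{-1}$ symmetry of $\Lef_z(\cC)$ from Lemma~\ref{lem:symmetry}. That duality argument is what your proposal tries to avoid by using the plus version directly, but what you would save there you lose in the gluing step. If you want to keep $T$ on the $Y$ side, you would need to prove a gluing/capping lemma for multi--ended cobordisms; otherwise, switching sides and proving the dual local formula, as the paper does, is the cleaner route.
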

A corollary of this theorem is the following useful result that will be the key to proving Theorems~\ref{thm:main3} and~\ref{thm:main5}
\begin{corollary}\label{cor:main2}
    Suppose  $\cC$ and $\cC'$ are concordances such that $\Lef_{z}(\cC)\not= \Lef_{z}(\cC')$. If $\Phi_{X;\b} \neq 0$, then $X_\cC$ is not diffeomorphic to $X_{\cC'}$.    
\end{corollary}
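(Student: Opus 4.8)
The plan is to derive the corollary from Theorem~\ref{thm:main1} together with the fact that the Ozsv\'ath--Szab\'o polynomial is, in a suitable sense, a diffeomorphism invariant. Applying Theorem~\ref{thm:main1} to $\cC$ and to $\cC'$, using the same $X$, torus $T$, and basis $\b$, gives
\[
    \Phi_{X_\cC;\b}=\Lef_{z_1}(\cC)\cdot\Phi_{X;\b}
    \qquad\text{and}\qquad
    \Phi_{X_{\cC'};\b}=\Lef_{z_1}(\cC')\cdot\Phi_{X;\b}.
\]
Fixing an $\F_2$--basis of $HF^+(Y)$ exhibits $\F_2[\Z^n]\otimes_{\F_2}HF^+(Y)$ as a free module over the integral domain $\F_2[\Z^n]$, so multiplication by the nonzero element $\Lef_{z_1}(\cC)-\Lef_{z_1}(\cC')$ is injective; since $\Phi_{X;\b}\neq0$, this already gives $\Phi_{X_\cC;\b}\neq\Phi_{X_{\cC'};\b}$.

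To turn this into a statement about diffeomorphism types, recall that $\partial X_\cC=\partial X_{\cC'}=Y$, so a diffeomorphism $f\colon X_\cC\to X_{\cC'}$ restricts to a diffeomorphism $\partial f$ of $Y$ and carries $\Spin^c(X_{\cC'})$ to $\Spin^c(X_\cC)$ compatibly with the $H^2(X;\Z)$--action. Since $b_2^+(X_\cC)=b_2^+(X)\geq2$ (concordance surgery along a torus in an integer homology sphere does not change the rational homology), naturality of the mixed invariant shows that $\Phi_{X_{\cC'};\b}$ is obtained from $\Phi_{X_\cC;\b}$ by postcomposing every coefficient with the automorphism $(\partial f)_*$ of $HF^+(Y)$, multiplying by a Laurent monomial (a change of base $\Spin^c$ structure), and performing a $\GL_n(\Z)$--substitution of $z_1,\dots,z_n$ (a change of basis of $H^2(X,\partial X)$). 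Thus it suffices to show that $\Lef_{z_1}(\cC)\cdot\Phi_{X;\b}$ and $\Lef_{z_1}(\cC')\cdot\Phi_{X;\b}$ lie in distinct orbits of this action.

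For this I would use the \emph{content} of the polynomial. Writing $\Phi_{X_\cC;\b}=\sum_\beta g_\beta\, e_\beta$ for a fixed $\F_2$--basis $\{e_\beta\}$ of $HF^+(Y)$ and $g_\beta\in\F_2[\Z^n]$, set $d(\Phi_{X_\cC;\b})=\gcd_\beta g_\beta$, well defined up to a unit (a monomial). An automorphism of $HF^+(Y)$ replaces $(g_\beta)$ by $\F_2$--linear combinations of its members, and a ring automorphism of $\F_2[\Z^n]$ preserves gcd's, so the class of $d(\Phi_{X_\cC;\b})$ modulo units and $\GL_n(\Z)$ is a genuine diffeomorphism invariant of $X_\cC$. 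Because $\Lef_{z_1}(\cC)$ has scalar coefficients, $d(\Phi_{X_\cC;\b})=\Lef_{z_1}(\cC)\cdot d(\Phi_{X;\b})$, with $d(\Phi_{X;\b})$ depending only on $X$. Now $\F_2[\Z^n]$ is a UFD and $\Lef_{z_1}(\cC)$ is a symmetric Laurent polynomial in the single variable $z_1$ (this refines the symmetry of the Alexander polynomial, coming from the conjugation symmetry $\widehat{HFK}(M,K,i)\cong\widehat{HFK}(M,K,-i)$ respected by $\widehat F_\cC$; one may alternatively treat $\Lef_z$ only up to units). Comparing Newton polytopes, any $\GL_n(\Z)$--substitution realizing an equivalence between $\Lef_{z_1}(\cC)\,d(\Phi_{X;\b})$ and $\Lef_{z_1}(\cC')\,d(\Phi_{X;\b})$ must carry $z_1$ to $z_1^{\pm1}$ up to a monomial, and then symmetry forces $\Lef_z(\cC)=\Lef_z(\cC')$, contrary to hypothesis.

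The main obstacle is precisely this last point: ruling out that the $\GL_n(\Z)$ and $HF^+(Y)$ ambiguities conspire to hide the difference between $\Lef_z(\cC)$ and $\Lef_z(\cC')$. It is cleanest when $\Phi_{X;\b}$ is a monomial times a single class of $HF^+(Y)$, which is exactly what arises in the symplectic-cap constructions feeding Theorems~\ref{thm:main3} and~\ref{thm:main5}: then $d(\Phi_{X;\b})$ is a unit, the invariant is simply the class of $\Lef_{z_1}(\cC)$, and the Newton polytope/symmetry argument applies directly. In general one runs the same argument with the full prime factorization of $d(\Phi_{X_\cC;\b})$ in $\F_2[\Z^n]$, a $\GL_n(\Z)$--equivariant invariant, isolating the symmetric univariate factor contributed by $\Lef_{z_1}(\cC)$; once the Newton polytope pins down the $z_1$--direction, the remaining verification is bookkeeping.
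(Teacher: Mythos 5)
Your reduction to a statement about greatest common divisors follows the paper's strategy closely: you observe that a diffeomorphism $\phi\colon X_\cC\to X_{\cC'}$ relates $\Phi_{X_\cC;\b}$ and $\Phi_{X_{\cC'};\b}$ by an automorphism of $HF^+(Y)$, an overall monomial, and a $\GL_n(\Z)$--substitution of variables, and that the gcd of the $\F_2[\Z^n]$--coordinates (your $d(\Phi_{X_\cC;\b})$, the paper's $f_\cC$) is well-defined up to that action and factors as $\Lef_{z_1}(\cC)\cdot d(\Phi_{X;\b})$. Up to this point you and the paper agree.

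The gap is in the last step. You claim that a Newton-polytope comparison forces the $\GL_n(\Z)$--substitution to carry $z_1$ to $z_1^{\pm1}$ up to a monomial, whence symmetry yields $\Lef_z(\cC)=\Lef_z(\cC')$. This is not justified, and in fact the Newton polytope is blind to exactly the situation the corollary must handle. The Newton polytope of $\Lef_{z_1}(\cC)\cdot d(\Phi_{X;\b})$ is the Minkowski sum of a segment of length $2\deg\Lef_z(\cC)$ along the $z_1$--axis with the polytope of $d(\Phi_{X;\b})$; if $\Lef_z(\cC)$ and $\Lef_z(\cC')$ have the same degree but different coefficients (e.g. $z^2+z+1+z^{-1}+z^{-2}$ versus $z^2+1+z^{-2}$), the two products have \emph{identical} Newton polytopes, so the polytope argument cannot distinguish them, and the substitution is not constrained at all. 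What is actually needed is a multiplicative, $\psi$--equivariant invariant on $\F_2[\Z^n]$ that sees the irreducible factorization rather than just the polytope. This is precisely what the paper imports from Sunukjian: the count $\Gamma_{\alpha,\psi}$ of factors lying in the $\psi$--orbit (together with conjugates) of a fixed irreducible $\alpha$, which is additive, invariant under $\psi$, and --- by Sunukjian's lemma, which uses only the $z\leftrightarrow z^{-1}$ symmetry of the univariate polynomial, a symmetry $\Lef_z(\cC)$ shares by Lemma~\ref{lem:symmetry} --- can always be chosen to separate two distinct symmetric univariate polynomials. Your proposal acknowledges that this last step is ``the main obstacle'' and ``bookkeeping,'' but without Sunukjian's $\Gamma$ (or an equivalent refinement of the factorization data) the argument does not close; the polytope is simply too coarse an invariant here.
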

The proof of this corollary follows an argument of Sunukjian \cite{Sunukjian15}, and the authors are grateful to Gompf for pointing out that we might be able to use such an argument. We note that this strengthens Corollary~1.2 in \cite{JZ18}.

\begin{remark}
To prove our main result about exotic fillings of $3$--manifolds we will only need to use knot surgery and not concordance surgery, but thought it was useful to develop the effects of these surgeries on the Ozsv\'{a}th--Szab\'{o} polynomial in as great a generality as possible. 
\end{remark}
%\begin{corollary}\label{cor:main2.5}
%    Suppose $K$ and $K'$ are two knots with $\Delta_K(t)\not= \Delta_{k'}(t)$.  If $\Phi_{X;\b} \neq 0$, then $X_\cC$ is not absolutely diffeomorphic to $X_{\cC'}$.    
%\end{corollary}

Given the above results our main theorem, Theorem~\ref{thm:main3}, will follow from the following result. 

\begin{theorem}\label{cap}
Given any closed connected contact $3$--manifold $(Y,\xi)$, there is a (strong) symplectic cap $(X,\omega)$ for $(Y,\xi)$ that is simply connected and contains a Gompf nucleus $N_2$ whose regular fiber is symplectic and has simply connected complement. Moreover, $\Phi_{X,\frs_0}=c^+(\xi)\in HF^+(-Y,\frs_0|_Y)$ for the canonical $\Spin^c$ structure $\frs_0$ on $(X,\omega)$.  
\end{theorem}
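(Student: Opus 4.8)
The plan is to prove Theorem~\ref{cap} in two stages: (1) build a simply connected strong symplectic cap $(X,\omega)$ of $(Y,\xi)$ containing a Gompf nucleus $N_2$ with the asserted properties, and (2) identify $\Phi_{X,\frs_0}$ with $c^+(\xi)$. For (1), I would begin from the theorem of Eliashberg and Etnyre that every closed contact $3$--manifold admits a strong symplectic cap, together with Gay's explicit construction of caps by attaching symplectic $2$--handles along the binding of a supporting open book. Such a construction produces a cap $\hat X$ carrying a Lefschetz-fibration-type structure, hence a closed square--zero symplectic surface $\Sigma$ (of genus the support genus of $\xi$) admitting a section, and one may arrange $\hat X$ to be simply connected. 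If the support genus is $1$ this already gives a square--zero symplectic torus in a simply connected cap; otherwise I would reduce to that case by a symplectic fiber sum (Gompf) of $\hat X$, along $\Sigma$, with an auxiliary simply connected closed symplectic manifold $Z$ containing $\Sigma$ as a square--zero symplectic surface together with a disjoint square--zero symplectic torus $T$ --- for instance $Z$ the fiber sum of two copies of $E(2)$ along a genus--$g$ surface built by tubing $g$ parallel elliptic fibers, $T$ a leftover regular fiber. Since the sum is supported away from the concave boundary, the result $X'$ is again a strong cap of $(Y,\xi)$; van Kampen (using that the relevant complements are simply connected and that $[\Sigma],[T]$ can be taken primitive) shows $X'$ is simply connected with $T$ having simply connected complement.

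Given such $X'$, set
\[
    X := X' \#_{T=F} E(2),
\]
the symplectic fiber sum along $T$ and a regular fiber $F$ of the elliptic fibration on $E(2)$, with $F$ chosen disjoint from a fixed Gompf nucleus $N_2 \subset E(2)$. Then $X$ is a strong symplectic cap of $(Y,\xi)$ with the same germ along $\partial X=-Y$, it contains $N_2$, and $b_2^+(X)\geq b_2^+(E(2))=3>1$, so $\Phi_X$ is defined. The regular fiber $F' \subset N_2$ is a square--zero symplectic torus in $X$, and I would check that $X\setminus\nu F'$ is simply connected (whence so is $X$): it is assembled from $X'\setminus\nu T$ and $E(2)\setminus\nu F\setminus\nu F'$ along $T^3$; the fundamental group of the second piece is normally generated by the meridian of $F'$, which is conjugate to the meridian of $F$; the gluing identifies the latter with the meridian of $T$; and that meridian is nullhomotopic in $X'\setminus\nu T$ because $[T]$ is primitive of square $0$. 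Here I use that $E(2)\setminus\nu F$ is simply connected and that the meridian of a regular fiber bounds the disk cut off from a section.

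For (2), note that the canonical $\Spin^c$ structure $\frs_0$ of $(X,\omega)$ restricts on $\partial X=-Y$ to the $\Spin^c$ structure determined by $\xi$, and that deleting a Darboux ball makes $X\setminus B^4$ a strong symplectic cobordism from $(Y,\xi)$ to $(S^3,\xi_{\mathrm{std}})$ with canonical $\Spin^c$ structure $\frs_0$. The equality $\Phi_{X,\frs_0}=c^+(\xi)\in HF^+(-Y,\frs_0|_Y)$ should then follow from the behavior of the Ozsv\'ath--Szab\'o contact invariant under symplectic caps: the cobordism map of a symplectic cap carries the distinguished generator to the contact class. Since $b_2^+(X)\geq 2$ the relevant map is the mixed map, so I would first apply the composition law to split $\overline{X-B^4}$ along a separating $3$--manifold inside the $E(2)$--summand, with the positive side lying in $E(2)\setminus\nu F$ and the complementary side reducing to an ordinary cobordism to which the contact--invariant statement applies, and then verify the $\Spin^c$ bookkeeping; there are no signs to track over $\F_2$.

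The step I expect to be hardest is stage (1): arranging simultaneously that the cap is simply connected, contains $N_2$, and --- most delicately --- that the distinguished regular fiber of $N_2$ has simply connected complement, all while keeping the symplectic form with the prescribed concave boundary; the genus--reduction and the iterated van Kampen computations need care. A secondary obstacle is making the ``a cap computes $c^+$'' statement precise for the mixed ($b_2^+\geq 2$) invariant instead of an ordinary cobordism map, which requires careful use of the composition law and $\Spin^c$ tracking.
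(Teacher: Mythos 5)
Your proposal takes a genuinely different route from the paper, and while the overall plan is plausible, there are gaps that the paper's structure is specifically designed to avoid.

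The paper builds $(X,\omega)$ from the bottom up rather than modifying a pre-existing cap. Step~1 attaches Weinstein $2$--handles to $[0,1]\times Y$ to kill $\pi_1$ and then attaches two more Weinstein $2$--handles in the trefoil/$-2$-unknot pattern (right side of Figure~\ref{nucleus}) to plant $N_2$ directly. Step~2 is another Weinstein $2$--handle cobordism, engineered through open book manipulations and chain relations, whose convex end $(Y'',\xi'')$ is an \emph{integer homology sphere}. Step~3 is Eliashberg's cap of $(Y'',\xi'')$, and Step~4 glues. Passing through the homology sphere $Y''$ is not incidental: it guarantees that $\Spin^c$ structures split uniquely across that cut, so the composition law (Proposition~\ref{prop:composition}) returns exactly the $\frs_0$ component, and it is the setting in which Plamenevskaya's Lemma~1 identifies $\Phi_{X_3,\frs_0}$ with $c^+(\xi'')$. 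Naturality of $c^+$ under the Stein cobordism $X_1\cup X_2$ then pulls this back to $c^+(\xi)$.

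Two gaps in your version. First, in stage~(1) the claim that ``the meridian of $T$ is nullhomotopic in $X'\setminus\nu T$ because $[T]$ is primitive of square~$0$'' is not a valid inference: primitivity and square zero are not enough in general. What actually makes the meridian bound in your construction is the presence of a section disk for the leftover regular fiber $T\subset Z$, and you need to make that explicit and track it through the fiber sums. The paper sidesteps such $\pi_1$ subtleties because everything after the $\pi_1$-killing handles is built from $2$--, $3$--, $4$--handles and a Lefschetz fibration that is visibly simply connected. Second, and more seriously, stage~(2) is under-specified in a way that the paper's homology-sphere detour is designed to handle. Splitting along a hypersurface inside $E(2)\setminus\nu F$ does not produce a cut through a rational homology sphere, so the $\Spin^c$ bookkeeping in the composition law is no longer automatic, and ``a cap computes $c^+$'' is precisely the statement under proof --- invoking it for ``the complementary side'' is circular unless you can pin down which class maps to $c^+(\xi)$. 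The paper resolves this by having the Weinstein part $X_1\cup X_2$ carry $c^+(\xi'')\mapsto c^+(\xi)$ by naturality, and by having Plamenevskaya's lemma identify $\Phi_{X_3,\frs_0}=c^+(\xi'')$ at the homology sphere. Your plan could perhaps be repaired by appealing directly to Ghiggini's more general formulation of the cap/contact-invariant relationship, but as written the $\Spin^c$ decomposition and the identification of the input class are not justified.
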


The first part of the theorem is almost proven in \cite{AO02, EH02a, Gay02} and, in different language, proven in \cite{Yasui11}, and the last part was proven by Plamenevskaya \cite{Plamenevskaya04} and Ghiggini \cite{Ghiggini06}, but as the argument is simple and we need caps with all the listed properties we sketch the proof in Section~\ref{caps}. 

This theorem brings up several interesting questions about how much of the Heegaard Floer homology of a $3$--manifold can be seen from the $4$--manifolds it bounds. More specifically we ask the following questions.
\begin{question}
Given a $3$--manifold $Y$ and a homogeneous element $\eta\in HF^+(Y)$ is there a $4$--manifold $X$ with $\partial X=Y$ and $\Phi_{X,\frs}=\eta$ for some $\Spin^c$ structure $\frs$?
\end{question}
Or more simply we can ask:
\begin{question}\label{q8}
Given a $3$--manifold $Y$ is there any $4$--manifold $X$ such that $\partial X=Y$ and $\Phi_{X,\frs}$ is a non-zero element of $HF^+(Y)$ for some $\Spin^c$ structure $\frs$?
\end{question}
If the answer to either question is yes, then one might ask if one can also control the topology of $X$. For example can it be chosen to be simply connected? Can it be chosen to contain a cusp neighborhood?

One way to approach Question~\ref{q8} is related to a pervious attempt by the authors to show that all closed oriented $3$--manifolds bound a simply connected manifold with infinitely many smooth structures by trying to embed such a manifold in a closed symplectic manifold. While our construction did not work, as pointed out by Gompf, it does lead to an interesting question which we state as a conjecture.
\begin{conjecture}
Any closed, oriented $3$--manifold admits a smooth embedding into a symplectic $4$--manifold. 
\end{conjecture}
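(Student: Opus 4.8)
The conjecture is most interesting when ``symplectic $4$--manifold'' is read as \emph{closed} --- otherwise the symplectization $(Y\times\R,\,d(e^t\alpha))$ of any contact form $\alpha$ (which exists by Martinet) already embeds $Y=Y\times\{0\}$ --- and I would attack the closed version in two regimes. In the \emph{fillable regime}, if $Y$ or $-Y$ carries a weakly symplectically fillable contact structure, one may cap a weak filling off to a closed symplectic $4$--manifold (Eliashberg's capping, together with Etnyre's semifilling-to-filling result), inside which $Y$ sits as a separating hypersurface; alternatively, when $c^+(\xi)\neq 0$ one glues a weak or strong filling to the symplectic cap furnished by Theorem~\ref{cap}. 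Since every closed oriented $3$--manifold admits a contact structure, and (by the results cited in the introduction) all Seifert fibered spaces and all $3$--manifolds carrying taut foliations fall into this regime, the genuine content of the conjecture is concentrated in the non-fillable case, with $P\#-P$ the guiding example --- neither orientation being tight, let alone fillable.

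For the general case I would work from a handle picture. Every closed oriented $3$--manifold $Y$ is surgery on a framed link $L\subset S^3$, hence bounds the simply connected $2$--handlebody $W=B^4\cup(\text{$2$--handles along }L)$; since $\partial W=Y$, it suffices to embed $W$ as a codimension-$0$ submanifold of a closed symplectic $4$--manifold. The mechanism I would try is to absorb each $2$--handle into a smoothly embedded disk of the correct self-intersection lying in the complement: start with an immersed slice disk in $B^4$ for each component $K_i$, blow up to resolve its double points, and insert further blow-ups to correct the induced framing to the surgery coefficient $n_i$. This would realize $W$ inside a connected sum $n\,\mathbb{CP}^2\#\,m\,\overline{\mathbb{CP}^2}$. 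The crucial point is sign-bookkeeping: such a connected sum is symplectic precisely when $n=1$ (it is then a rational surface $\mathbb{CP}^2\#\,m\,\overline{\mathbb{CP}^2}$), so one must show the double points and framing corrections can always be arranged using \emph{exactly one} positive blow-up --- i.e. embed $W$ in $\mathbb{CP}^2\#\,m\,\overline{\mathbb{CP}^2}$ for some $m$. A parallel attempt runs through Lefschetz pencils: take an open book for $Y$ (Alexander), modify the monodromy into a product of positive Dehn twists while controlling the accompanying change of the bounding $3$--manifold, then cap the resulting positive Lefschetz fibration over $D^2$ off to one over $S^2$, whose total space is symplectic by Gompf's theorem and still contains the original $Y$.

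The step I expect to be the real obstacle is exactly the non-fillable case, and in both routes above it reappears as a positivity/sign problem I do not know how to force in general: controlling the number of $\mathbb{CP}^2$ summands in the handle route, or realizing an arbitrary surface mapping class by positive Dehn twists up to a \emph{controlled} surgery on $Y$ in the Lefschetz route (mere positive stabilization of an open book does not change $Y$ and would falsely make everything Stein fillable, so one genuinely must allow $Y$ to change and then bound that change). My best hope for breaking the impasse is to exploit reducibility: for $Y=Y_1\#\cdots\#Y_k$, embed each $Y_i$ in a closed symplectic $Z_i$ and then perform a symplectic sum of the $Z_i$ along suitably positioned balls meeting the hypersurfaces so that the glued-up hypersurface is exactly $Y_1\#\cdots\#Y_k$. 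Turning a $3$--manifold connected sum into a genuine symplectic sum of the ambient $4$--manifolds while preserving the hypersurface is the technical heart of the matter, and for an irreducible $3$--manifold that (together with its orientation reverse) admits no tight contact structure I have no proposal beyond hoping that a direct construction can be found.
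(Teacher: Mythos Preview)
This statement is an open \emph{conjecture} in the paper, not a theorem; there is no proof to compare your proposal against. The paper merely states the conjecture and follows it with a remark that every closed oriented $3$--manifold admits a \emph{topological} embedding into a closed symplectic $4$--manifold which becomes smooth after a single connected sum with $S^2\times S^2$ (attributed to \cite{Mukerjee20pre}). Your proposal is, appropriately, not a proof but a survey of possible attacks, and you are candid about where each one stalls.

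Your diagnosis of the difficulty is accurate: the fillable regime is indeed handled by capping, and the genuine content lies with manifolds like $P\#-P$ that carry no tight contact structure in either orientation. The positivity obstruction you isolate in both the blow-up route (forcing exactly one $\mathbb{CP}^2$ summand) and the Lefschetz route (realizing an arbitrary mapping class by positive Dehn twists up to a controlled modification of $Y$) is exactly the crux, and neither is known to be surmountable in general. One caution on your reducibility idea: there is no ``symplectic sum along balls.'' Ordinary connected sum of closed symplectic $4$--manifolds essentially never yields a symplectic manifold (for instance, $b_2^+$ would be additive while the canonical class would have to behave compatibly; more concretely, Seiberg--Witten/Ozsv\'ath--Szab\'o invariants vanish on such sums), and Gompf's symplectic sum is along symplectic surfaces of matching genus and opposite normal Euler number, not along $3$--spheres. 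So even granting embeddings of the summands $Y_i$, assembling them into an embedding of $Y_1\#\cdots\#Y_k$ inside a single closed symplectic manifold requires a genuinely new construction, not a connected sum of the ambient $Z_i$.
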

\begin{remark}
In the paper \cite{Mukerjee20pre}, the third author showed that any such $3$--manifold admits a topological embedding in a symplectic manifold that can be made to be smooth after a single connected sum with $S^2\times S^2$. %if an $L$-space has such an embedding then it bounds a definite manifold. We conjecture that this is sufficient to ensure a manifold admits a filling with infinitely many smooth structures. %So if the above conjecture is true then all $L$-spaces will have a simply connected filling with infinitely many smooth structures.
\end{remark}

\subsection*{Organization.} In Section~\ref{contactprelim} we review results concerning contact structures and Weinstein cobordisms. Perturbed Heegaard Floer theory is reviewed in Section~\ref{perturbedHF} with a focus on the work of Juh\'{a}sz and Zemke \cite{JZ18}. In Sections~ \ref{KCS} and~\ref{concsurgery}, we review Juh\'{a}sz and Zemke's work \cite{JZ18} on how concordance surgery affects the Ozsv\'ath--Szab\'o invariants of a closed $4$--manifold and prove our Theorem~\ref{thm:main1} on its affect on the invariants of manifolds with boundary. Theorems~\ref{thm:main3} and~\ref{thm:main4} are proven in Section~\ref{caps}, while Theorem~\ref{thm:main5} is proven in Section~\ref{df}.

\subsection*{Acknowledgements}
The authors thank Robert Gompf and Kouichi Yasui for pointing out an error in the first version of the paper. We also thank Tom Mark and Ian Zemke for helpful discussions about the Ozsv\'{a}th--Szab\'{o} invariants, Chris Gerig and Olga Plamenevskaya for helpful discussions about symplectic caps, Jen Hom and Jaewoo Jung for helpful conversations about Heegaard Floer homology, and Michael Klug for asking the question that prompted this research. We also thank Marco Golla and an anonymous referee for helpful comments on an earlier draft of the paper. The authors were partially supported by NSF grant DMS-1608684 and and DMS-1906414.

%%%%%%%%%%%%%%%%%%%%%%%%%%%%%%%%%%%%
\section{Contact structures, Weinstein cobordisms, and open books}\label{contactprelim}
%%%%%%%%%%%%%%%%%%%%%%%%%%%%%%%%%%%%
We assume the reader is familiar with basic results concerning contact and symplectic geometry, convexity and concavity of symplectic manifolds with boundary, and open book decompositions as can be found in \cite{Etnyre06}, but we briefly recall some of this to establish notation and for the convenience of the reader. 

We begin by recalling that a Legendrian knot $L$ in a contact manifold $(Y,\xi)$ has a standard neighborhood $N$ and a framing $fr_\xi$ given by the contact planes. If $L$ is null-homologous then $fr_\xi$ relative to the Seifert framing is the Thurston-Bennequin invariant of $L$.  If one does $fr_\xi-1$ surgery on $L$ by removing $N$ and gluing back a solid torus so as to affect the desired surgery, then there is a unique way to extend $\xi|_{Y-N}$ over the surgery torus so that it is tight on the surgery torus. The resulting contact manifold is said to be obtained from $(Y,\xi)$ by Legendrian surgery on $L$. 

Recall a symplectic cobordism from the contact manifold $(Y_-,\xi_-)$ to $(Y_+,\xi_+)$ is a symplectic manifold $(W,\omega)$ with boundary $-Y_-\cup Y_+$ where $Y_-$ is the the part of the boundary that is concave and $Y_+$ is convex. Here, unless specifically stated otherwise, we mean convex and concave in the strong sense. The first result we will need concerns when symplectic cobordisms can be glued together. 
\begin{lemma}\label{lem:glue}
If $(X_i,\omega_i)$ is a symplectic cobordism from $(Y_i^-,\xi_i^-)$ to $(Y_i^+,\xi_i^+)$, for $i=1,2$, and $(Y_1^+,\xi_1^+)$ is contactomorphic to $(Y_2^-, \xi_2^-)$, then we may use the contactomorphism to glue $X_1$ and $X_2$ together to get a symplectic cobordism from $(Y_1^-,\xi_1^-)$ to $(Y_2^+,\xi_2^+)$.
\end{lemma}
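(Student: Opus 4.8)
The plan is to recognize both symplectic forms as symplectizations near the boundary components being glued, use the contactomorphism $\psi$ to identify a convex hypersurface of $X_1$ with the concave boundary of $X_2$, and then patch the forms there. To set up the normal forms: since $(Y_1^+,\xi_1^+)$ is a convex boundary component of $(X_1,\omega_1)$, there is an outward-pointing Liouville vector field $V_1$ near $Y_1^+$, and with $\alpha_1:=(\iota_{V_1}\omega_1)|_{Y_1^+}$ its flow identifies a collar of $Y_1^+$ in $X_1$ with $\big((-\epsilon_0,0]\times Y_1^+,\,d(e^t\alpha_1)\big)$, where $Y_1^+=\{0\}$ and the rest of $X_1$, in particular $Y_1^-$, lies on the side of decreasing $t$. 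Similarly the inward-pointing Liouville vector field at the concave boundary of $(X_2,\omega_2)$ produces a contact form $\alpha_2$ for $\xi_2^-$ and an identification of a collar of $Y_2^-$ with $\big([0,\epsilon_0)\times Y_2^-,\,d(e^t\alpha_2)\big)$, with $Y_2^-=\{0\}$. Because $\psi$ is a contactomorphism, $\psi^*\alpha_2=f\alpha_1$ for a smooth positive function $f$ on $Y_1^+$; since $f$ is bounded above and below by positive constants on the compact manifold $Y_1^+$, after rescaling $\omega_2$ by a large positive constant (which changes neither the contact structures on the ends of $X_2$ nor the conclusion) I may assume $f\ge 1$.

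Next I would enlarge $X_1$ near its convex end by gluing on a piece of symplectization $\big([0,C]\times Y_1^+,\,d(e^t\alpha_1)\big)$ along $\{0\}\times Y_1^+$, choosing $C>\max_{Y_1^+}\ln f$. As $\omega_1$ already equals $d(e^t\alpha_1)$ on the collar, this yields a symplectic manifold $(\widetilde X_1,\widetilde\omega_1)$ diffeomorphic to $X_1$ and containing the graph $\Gamma=\{\,t=\ln f(y):y\in Y_1^+\,\}$ in the interior of the enlarged collar. The Liouville field $\partial_t$ is transverse to $\Gamma$, and the subregion $X_1'\subset\widetilde X_1$ on the decreasing-$t$ side of $\Gamma$ is again a strong symplectic cobordism with untouched concave end $(Y_1^-,\xi_1^-)$ and new convex boundary $\Gamma$; moreover $X_1'$ is diffeomorphic to $X_1$. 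Flowing $\Gamma$ by $\partial_t$ shows that near $\Gamma$ one has $\widetilde\omega_1=d(e^s\beta)$, where $s$ is signed Liouville time to $\Gamma$ and $\beta$ is the contact form induced on $\Gamma$; and by the choice of $\Gamma$ one computes $\beta=e^{\ln f}\alpha_1=f\alpha_1=\psi^*\alpha_2$ under the obvious identification $\Gamma\cong Y_1^+$.

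Finally I would glue $X_1'$ to $X_2$ along $\psi\colon\Gamma\to Y_2^-$: under this identification the collar $(-\delta,0]\times\Gamma$ of $\Gamma$ in $X_1'$ and the collar $[0,\delta)\times Y_2^-$ of $Y_2^-$ in $X_2$ assemble to $(-\delta,\delta)\times Y_2^-$, on which both symplectic forms equal $d(e^t\alpha_2)$ by the previous paragraph. Hence they patch to a smooth symplectic form $\omega$ on $X_1'\cup_\psi X_2$, which is diffeomorphic to $X_1\cup_\psi X_2$; since neither $(Y_1^-,\xi_1^-)$ nor $(Y_2^+,\xi_2^+)$ was disturbed, these remain a concave and a convex boundary component, so $(X_1\cup_\psi X_2,\omega)$ is the desired symplectic cobordism from $(Y_1^-,\xi_1^-)$ to $(Y_2^+,\xi_2^+)$.

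The step I expect to be the main obstacle is the middle one: one cannot glue directly along $Y_1^+$ and $Y_2^-$ because, although both symplectic forms are of symplectization type there, they are calibrated by the a priori different contact forms $\alpha_1$ and $\psi^*\alpha_2=f\alpha_1$. The enlarge-and-recut trick is precisely what replaces the given convex boundary of $X_1$ by a nearby convex hypersurface whose induced contact form is exactly $\psi^*\alpha_2$; the remaining ingredients — existence of the Liouville collars, independence of the construction from the choices made, and smoothness of the patched form — are routine consequences of the collar neighborhood theorem and the local structure of strong symplectic boundaries.
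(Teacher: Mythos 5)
Your argument is correct and is the standard enlarge-and-recut construction for gluing strong symplectic cobordisms along matching contact ends; the paper itself gives no proof of Lemma~\ref{lem:glue}, deferring to \cite{Etnyre98} with the remark that it is a simple exercise, and your write-up is exactly the argument that reference (and the folklore) supplies. One small simplification you could make: rather than rescaling $\omega_2$, you could just as well extend the collar of $Y_1^+$ on both sides to $[-C,C]\times Y_1^+$ with $C>\max|\ln f|$, so the graph $\Gamma=\{t=\ln f\}$ is automatically interior regardless of the sign of $\ln f$.
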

\noindent 
The proof is a simple exercise, {\em cf.\ }\cite{Etnyre98}. 

Another way to build cobordisms is by Weinstein handle attachment, \cite{GS99, Weinstein91}. One may attach a 0, 1, or 2--handle to the convex end of a symplectic cobordism to get a new symplectic cobordism with the new convex end described as follows. For a 0--handle attachment, one merely forms the disjoint union with a standard $4$--ball and so the new convex boundary will be the old boundary disjoint union with the standard contact structure on $S^3$. For a 1--handle attachment, the convex boundary undergoes a, possibly internal, connected sum. A 2--handle is attached along a Legendrian knot $L$ with framing one less that the contact framing, and the convex boundary undergoes a Legendrian surgery. 

Given a surface $\Sigma$ with boundary and a diffeomorphism $\phi:\Sigma\to \Sigma$ that is the identity near $\partial \Sigma$ we can form a closed $3$--manifold $M_{(\Sigma,\phi)}$ by gluing solid tori to the boundary of the mapping torus
\[
T_\phi = \Sigma\times [0,1]/\sim,
\]
where $(1,x)\sim(0,\phi(x))$, so that the meridians of the solid tori map to $\{p\}\times [0,1]/\sim$ for some $p\in \partial \Sigma$. If $M$ is diffeomorphic to $M_{(\Sigma,\phi)}$ then we say that $(\Sigma, \phi)$ is an open book decomposition for $M$. Following work of Thurston and Winkelnkemper \cite{TW75}, Giroux \cite{Giroux02} showed that there is a unique contact structure associated to an open book decomposition. We say that the contact structure is supported by the open book. Moreover, he also showed that every contact structure is supported by some open book decomposition and if an open book is positively stabilized, then the supported contact structure is the same. A positive stabilization of $(\Sigma, \phi)$ is $(\Sigma',\phi')$ where $\Sigma'$ is obtained form $\Sigma$ by attaching a 2--dimensional 1--handle and $\phi'=\phi\circ \tau_\gamma$, where $\gamma$ is a curve on $\Sigma'$ that intersects the co-core of the 1-handle transversely and exactly once, and $\tau_\gamma$ is a right handed Dehn twist about $\gamma$.

%\begin{lemma}\label{lem:1h}
%Let $(X,\omega)$ be a symplectic cobordism with convex end $(Y,\xi)$ and $(\Sigma, \phi)$ an open book supporting $(Y,\xi)$. If one attaches a Weinstein 1-handle to $(X,\omega)$ then the new convex boundary component will be supported by the open book $(\Sigma',\phi')$ where $\Sigma'$ is obtained from $\Sigma$ by attaching a 1-handle (this may be attached to $\Sigma$ in any way) and $\phi'$ is $\phi$ extended over the 1--handle so that it is the identity there. 
%\end{lemma}
The following theorem is proven in \cite{Etnyre06}.
\begin{theorem}\label{thm:2h}
Let $(Y,\xi)$ be a strongly (respectively weakly) convex boundary component of a symplectic manifold $(X, \omega)$ and $(\Sigma, \phi)$ an open book decomposition supporting $\xi$. If $\gamma$ is a non-separating curve on $\Sigma$, then 
\begin{enumerate}
\item a page of the open book may be isotoped, so that the open book still supports $\xi$, and $\gamma$ becomes a Legendrian curve,
\item a Weinstein 2--handle may be attached to $\gamma$ resulting in a new symplectic manifold $(X',\omega')$ whose new boundary component $(Y',\xi')$ is strongly (respectively weakly) convex and obtained from $(Y,\xi)$ by Legendrian surgery on $\gamma$, and
\item $(Y',\xi')$ is supported by the open book $(\Sigma, \phi\circ \tau_\gamma)$. 
\end{enumerate}
\end{theorem}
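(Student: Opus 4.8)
The plan is to treat the three items in order: item~(1) is purely contact-topological, while items~(2) and~(3) follow from the Weinstein handle attachment recalled above together with an analysis in a local model near a page.

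\emph{Item (1).} Following Thurston--Winkelnkemper and Giroux, I would fix a contact form $\alpha$ on $Y$ adapted to $(\Sigma,\phi)$ which, on a collar $\Sigma\times(-\epsilon,\epsilon)$ of a page $\Sigma\times\{0\}$, has the model form $\alpha = dt + \beta$, with $t$ the collar coordinate and $\beta$ a $1$-form on $\Sigma$ whose differential $d\beta$ is a positive area form; then the Reeb vector field is $\partial_t$, transverse to the pages, and $\xi\cap T\Sigma=\ker\beta$. A curve $\gamma\subset\Sigma$ admits a Legendrian lift $\tilde\gamma(s)=(\gamma(s),h(s))$ exactly when $h'(s)=-\beta(\gamma'(s))$, and this lift closes up precisely when $\oint_\gamma\beta=0$. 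This is where the hypothesis that $\gamma$ is non-separating enters: then $[\gamma]\neq 0$ in $H_1(\Sigma)$, so there is a closed $1$-form $\eta$ on $\Sigma$ with $\oint_\gamma\eta\neq 0$, and subtracting a suitable multiple of $\eta$ from $\beta$ (which leaves $d\beta$, hence the model, unchanged) we may assume $\oint_\gamma\beta=0$. The resulting form still has $d\beta>0$ on every page, so it is adapted to $(\Sigma,\phi)$; by Giroux's uniqueness its kernel is isotopic to $\xi$, and by Gray stability this isotopy is absorbed into an isotopy of the pages that keeps $\xi$ fixed. Now $\gamma$ sits on a page as a Legendrian curve, and a short check in the model — the push-off of $\tilde\gamma$ obtained by displacing $\gamma$ slightly inside $\Sigma$ is $C^1$-close to a genuine Legendrian push-off, since $\alpha$ nearly annihilates its velocity — shows that the contact framing of $\gamma$ equals its page framing.

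\emph{Items (2) and (3).} Because the contact framing of $\gamma$ now equals its page framing, item~(2) is exactly the Weinstein $2$-handle attachment discussed before the statement of the theorem: attaching a $2$-handle along $\gamma$ with framing one less than the contact framing yields a symplectic cobordism $(X',\omega')$ whose new convex boundary $(Y',\xi')$ is obtained from $(Y,\xi)$ by Legendrian surgery on $\gamma$, and this new boundary is strongly convex if $(Y,\xi)$ was and weakly convex if $(Y,\xi)$ was weakly convex. For item~(3), the attaching framing is one less than the page framing, and $(-1)$-surgery with respect to the page framing on a curve lying on a page of $(\Sigma,\phi)$ has, in the mapping-torus picture of the complement of the binding, the effect of composing the monodromy with the right-handed Dehn twist $\tau_\gamma$; hence $Y'$ is diffeomorphic to $M_{(\Sigma,\phi\circ\tau_\gamma)}$, and one checks directly in the local model that $\alpha$ extends over the $2$-handle as a form adapted to $(\Sigma,\phi\circ\tau_\gamma)$, so that $\xi'$ is the contact structure supported by that open book.

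\emph{Main obstacle.} The strong case of all of this is classical. The genuine difficulty is the weak case of item~(2): a weak filling need not be symplectization-like near $Y$, so before attaching the handle one must first deform $\omega$ in a neighborhood of the Legendrian $\gamma$ to a standard (strong) model — which is possible because $\omega|_\xi>0$ and $\gamma$ is isotropic — then attach a standard Weinstein handle there, and finally verify that the reassembled form $\omega'$ still dominates $\xi'$ along $Y'$, which requires tracking the class $[\omega'|_{Y'}]\in H^2(Y';\R)$ through the surgery despite the change in $H_1$. That last step is the one I would expect to have to write out with care.
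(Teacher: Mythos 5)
This theorem is not proven in the paper — it is quoted with the citation ``proven in [Etnyre06]'' — so there is no internal proof to compare against. Evaluating your sketch on its own terms: the overall strategy (Thurston--Winkelnkemper normal form for item~(1), standard Weinstein handle attachment plus the Dehn-twist/surgery correspondence for items~(2) and~(3)) is a reasonable route, and differs from the convex-surface Legendrian realization argument one typically sees cited. But there are two genuine gaps.

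First, in item~(1) you only arrange $\oint_\gamma\beta=0$ and then produce a Legendrian lift $\tilde\gamma(s)=(\gamma(s),h(s))$ with $h$ generally nonconstant. Such a $\tilde\gamma$ does \emph{not} lie on any page $\Sigma\times\{t\}$, so the phrase ``page framing of $\tilde\gamma$'' has no meaning, and the framing comparison (``$C^1$-close push-off'') is not a proof: framings are integers, and closeness of vector fields gives equality only after you exhibit an honest homotopy of framings, which you cannot do while $\tilde\gamma$ wanders off the page. The fix is to push the modification one step further: once $\oint_\gamma\beta=0$, the restriction $\beta|_\gamma$ is exact on $\gamma$, say $\beta|_\gamma=df$, so subtracting $d(\rho\tilde f)$ (with $\rho$ a bump function near $\gamma$ and $\tilde f$ an extension of $f$) from $\beta$ kills $\beta|_\gamma$ pointwise without changing $d\beta$. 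Then $\gamma$ itself is Legendrian on the page $\Sigma\times\{0\}$, $\xi=\ker(dt+\beta)$ is the graph of $-\beta$ over $T\Sigma$ along $\gamma$, and the contact framing is visibly homotopic to the page framing — no $C^1$ estimate needed.

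Second, for the weak case you only outline a plan and explicitly defer the last step. That deferral is where the theorem actually lives: the strong case is classical, and the paper needs the weak case too. Moreover the specific difficulty you flag — tracking $[\omega'|_{Y'}]\in H^2(Y';\R)$ through the surgery — is somewhat beside the point. Weak domination $\omega'|_{\xi'}>0$ is a pointwise open condition, not a cohomological one; the real issue is producing a normal form for $\omega$ in a neighborhood of the Legendrian attaching circle inside the weak filling (Eliashberg's interpolation lemma, or the normal-form arguments in the cap constructions cited as [AO02, EH02a, Gay02]) so that the Weinstein handle's symplectic form glues to $\omega$ smoothly and positivity on $\xi'$ is preserved near the belt sphere. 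As written, your sketch names the right difficulty but misidentifies its nature and leaves it unresolved, so the weak half of the statement is not established.
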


%%%%%%%%%%%%%%%%%%%%%%%%%%%%%%%%%%%%
\section{Perturbed Heegaard Floer homology}
%%%%%%%%%%%%%%%%%%%%%%%%%%%%%%%%%%%%
In order to prove Theorem~\ref{thm:main1} we will need to use perturbed Heegaard Floer homology that was originally defined by Ozsv\'ath and Szab\'o in  \cite{OS04a}, {\em cf}  \cite{JM08}. The dependence of this homology on the data used to define it is rather subtle and was worked out in Juh\'{a}sz and Zemke's paper {\cite[Theorem~3.1]{JZ18}} using the formalism of projective transitive systems that was introduced by Baldwin and Sivek in \cite{BS15} when studying the naturality of sutured monopole and instanton homology.

%%%%%%%---------------------------------------------------------------------
\subsection{Novikov rings and projective transitive systems}\label{novikov}
%%%%%%%---------------------------------------------------------------------
Recall that 
%\begin{definition}
    the \dfn{Novikov ring} over $\F_2$ is a set of formal series 
    \[
        \Lambda = \left\{\sum_{x\in\R}n_x z^x : n_x\in\F_2\right\} 
    \] 
    where the set 
    \[
        \{x \in (-\infty, c] : n_x\neq0\}
    \] 
    is finite for every $c\in\R$. One may easily check that this is a field under the obvious operations. 
%\end{definition}

Let $Y$ be a closed $3$--manifold equipped with a closed $2$-form $\omega\in\Omega^2(Y)$. Then there exists an action of a group ring $\F_2[H^1(Y)] \cong \F_2[H_2(Y)]$ on $\Lambda$, induced by $\omega$, which is defined as follows. For $a \in H_2(Y)$,

\[
    e^a \cdot z^x = z^{x+\int_a\omega}.
\] 

The naturality of perturbed Heegaard Floer homology is conveniently described by projective transitive systems, which were first introduced by Baldwin and Sivek in \cite{BS15}.

\begin{definition}
    Let $\cC$ be a category and $I$ a set. A \dfn{transitive system in $\cC$ indexed by I} consists of 
    \begin{itemize}
        \item a collection of objects $(X_i)_{i \in I}$ in $\cC$ and
        \item distinguished morphisms $\Phi^i_j: X_i \rightarrow X_j$ for $(i,j) \in I \times I$ such that
        \begin{enumerate}
            \item $\Phi^j_k \circ \Phi^i_j = \Phi^i_k$ and
            \item $\Phi^i_i = id_{X_i}$.
        \end{enumerate}
    \end{itemize}
\end{definition}

Let $\cC$ be the projectivized category of $\Lambda[U]$--modules, where $\Lambda$ is the Novikov ring above and $U$ is a formal variable. The objects of $\cC$ are $\Lambda$--modules and the morphism set $\Hom_{\cC}(X_1,X_2)$ is the projectivization of $\Hom_{\Lambda}(X_1,X_2)$ under the action by left multiplication of elements of $\Lambda$ that are of the form $z^x$.

In \cite{BS15, BS16}, Baldwin and Sivek called a transitive system over the projectivized category a \dfn{projective transitive system}. For morphisms $f,g\in \Hom_{\Lambda}(X_1,X_2)$, we write $f \doteq g$ if $f = z^x \cdot g$ for some $x \in \R$. Also, if $X$ is a $\Lambda$--module and $a,b \in X$, we write $a \doteq b$ if $a = z^x \cdot b$. Note that the composition of morphisms in a projective transitive system is well-defined, while the addition of morphisms is not well-defined.

There is also a notion of a morphism between transitive systems. 

\begin{definition} Let $(X_i)_{i\in I}$ and $(Y_j)_{j\in J}$ be transitive systems in the category $\cC$. A \dfn{morphism of transitive systems} is a collection of morphisms
    \[
        F^i_j:X_i\rightarrow Y_j
    \]
    in $\cC$ such that
    \[
        \Phi^j_{j'}\circ F^i_j\circ\Phi^{i'}_i = F^{i'}_{j'}
    \]
    for all $i, i'\in I$ and $j, j'\in J$.
\end{definition}

%%%%%%%---------------------------------------------------------------------
\subsection{Perturbed Heegaard Floer homology}\label{perturbedHF}
%%%%%%%---------------------------------------------------------------------

%%%%%%%%%%%%% definitions %%%%%%%%%%%%%

Let $Y$ be a closed $3$--manifold and $\frs$ a $\Spin^c$ structure on $Y$. Recall that Heegaard Floer homology is a package of $\F_2[U]$-modules $HF^\circ(Y,\frs)$ for $\circ\in\{\infty,+,-,\wedge\}$, which fit into a long exact sequence
\[
    \cdots\xrightarrow{\tau}HF^-(Y,\frs)\rightarrow HF^\infty(Y,\frs)\rightarrow HF^+(Y,\frs)\xrightarrow{\tau} HF^-(Y,\frs)\rightarrow\cdots
\]

In \cite{OS04a}, Ozsv\'{a}th--Szab\'{o} introduced Heegaard Floer homology perturbed by a second real cohomology class, which is more thoroughly discussed in \cite{JM08}. Let $\omega\in\Omega^2(Y)$ be a closed 2-from on $Y$ and $\cH = (\Sigma,\alphas,\betas,w)$ an $\frs$-admissible pointed Heegaard diagram of $Y$. Denote the two handlebodies determined by $\cH$ by $H_{\alphas}$ and $H_{\betas}$ respectively and let $D_{\alphas}$ and $D_{\betas}$ be sets of compressing disks of $H_{\alphas}$ and $H_{\betas}$, respectively, such that $D_{\alphas}$ intersects $\Sigma$ along $\alphas$ and $D_{\betas}$ intersects $\Sigma$ along $\betas$. To define perturbed Heegaard Floer homology, we need to keep track of homotopy data associated to $\phi\in\pi_2(\xs,\ys)$. Note that $\phi$ determines a 2--chain $\cD(\phi)$ on $\Sigma$ with boundary a union the loops in $\alphas\cup \betas$. One may cone these loops in the compressing disks $D_{\alphas}$ and $D_{\betas}$ to obtain a 2--chain $\widetilde{D}(\phi)$. Now we define
\[
    A_\omega(\phi):=\int_{\widetilde{\cD}(\phi)}\omega.
\]

Consider a chain complex $CF^\infty(\cH,\frs;\omega)$ which is a free $\Lambda$-module generated by $U^i\xs$ for $\xs\in T_\alpha\cap T_\beta$, such that the $\Spin^c$ structure associated to $\xs$ is $\frs$, i.e. $\frs(\xs)=\frs$. The differential is defined as follows.

\[
    \partial^\infty(U^i\xs) = \sum_{y\in T_\alpha\cap T_\beta}\sum_{\substack{\phi\in\pi_2(x,y) \\ \mu(\phi)=1}}\#(\cM(\phi)/\R)\cdot z^{A_\omega(\phi)}\cdot U^{i-n_w(\phi)}\ys \quad \text{(mod 2)},
\]
where $n_w(\phi)$ is the algebraic intersection number between $\phi\in\pi_2(\xs,\ys)$ and $\{w\}\times Sym^{g-1}(\Sigma)$ and $\cM(\phi)$ is the space of $J$-holomorphic disks in the homotopy class $\phi$ for $J \in \mathcal{J}$, where $\mathcal{J}$ is a generic family of almost complex structures on $Sym^g(\Sigma)$. For $\circ\in\{+,-,\wedge\}$, $\partial^\circ$ is induced from $\partial^\infty$ in the usual way and $(\partial^\circ)^2=0$ as in the original Heegaard Floer homology. Now we define 
\[
    HF^\circ(Y,\frs;{\omega}):=H_*(CF^\circ(\cH,\frs;\omega),\partial^\circ).
\]
The homology $HF^\circ(Y,\frs;{\omega})$ depends on the choice of $\cH$ and $J$, but it forms a projective transitive system.

\begin{theorem}[Juh\'{a}sz--Zemke \protect{\cite[Theorem~3.1]{JZ18}}]
    Let $Y$ be a $3$--manifold equipped with a closed $2$-form $\omega \in \Omega^2(Y)$ and $\frs \in \Spin^c(Y)$ a $\Spin^c$ structure on $Y$. For $\circ \in \{\infty, +, -, \wedge\}$, $HF^\circ(Y,\frs;{\omega})$ forms a projective transitive system of $\Lambda[U]$-modules, indexed by the set of pairs $(\cH,J)$, where $\cH = (\Sigma, \alphas, \betas, w)$ is an $\frs$-admissible pointed Heegaard diagram of $Y$, and $J$ is a generic almost complex structure on $Sym^g(\Sigma)$.
\end{theorem}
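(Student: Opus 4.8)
The plan is to follow the strategy Ozsv\'ath and Szab\'o used to prove the naturality of ordinary Heegaard Floer homology in \cite{OS04a}, reorganized through the formalism of projective transitive systems of Baldwin and Sivek \cite{BS15, BS16}, while carrying the perturbation weights $A_\omega$ through each step; this is exactly the route taken by Juh\'asz and Zemke, so I would essentially reproduce their argument. The structural input, imported wholesale from the unperturbed theory, is that any two $\frs$--admissible pointed Heegaard diagrams of $Y$ are connected by a finite sequence of \emph{elementary moves} --- isotopies of the $\alphas$-- and $\betas$--curves, handleslides, and (de)stabilizations --- that any two generic almost complex structures on $Sym^g(\Sigma)$ are joined by a generic path, and that any two such sequences with the same endpoints differ by a finite list of ``relations between moves.'' To each elementary move, and to each change of $J$, one attaches a chain map over $\Lambda$; the distinguished morphism $\Phi^i_j$ between the complexes computed from $(\cH_i,J_i)$ and $(\cH_j,J_j)$ is then the composite along any connecting sequence, and the theorem amounts to the assertions that this composite is a well-defined morphism in the projectivized category $\cC$, that $\Phi^j_k\circ\Phi^i_j\doteq\Phi^i_k$, and that $\Phi^i_i\doteq\mathrm{id}$. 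It suffices to treat one flavor, say $\circ=\infty$, since all maps in sight are $\Lambda[U]$--module maps respecting the relevant filtrations and hence pass to $+$, $-$, and $\wedge$.

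First I would define the elementary maps, each decorated with the weight $z^{A_\omega(\cdot)}$. For a change of almost complex structure, or for an isotopy of the attaching curves crossing no handleslide, the continuation map and the chain homotopy exhibiting it as a quasi-isomorphism count the relevant holomorphic disks $\phi$ with coefficient $z^{A_\omega(\phi)}$; checking that these are chain maps and chain homotopies over $\Lambda$ comes down to additivity of $A_\omega$ under juxtaposition of domains. For a handleslide I would use the holomorphic triangle map in the diagram $(\Sigma,\alphas,\alphas',\betas,w)$, paired against the top generator of the perturbed Floer homology of the associated connected sum of copies of $S^1\times S^2$, with each triangle $\psi$ weighted by $z^{A_\omega(\psi)}$, the triangular domain being closed up against compressing disks exactly as $\cD(\phi)$ is closed up to $\widetilde{\cD}(\phi)$. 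For a stabilization, the new handle contributes a single transverse intersection of the new $\alphas$-- and $\betas$--curve and no holomorphic disk with nonzero $n_w$; after isotoping so that $\omega$ vanishes near the stabilization region, $A_\omega$ is unchanged and the stabilization map is a tautological isomorphism of $\Lambda[U]$--complexes.

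The content beyond the unperturbed case, and the source of the word ``projective,'' is the bookkeeping of $A_\omega$. On the one hand $A_\omega(\phi)$, and likewise $A_\omega(\psi)$ for triangles, depends on auxiliary data --- the compressing disk systems $D_{\alphas},D_{\betas}$ and the way domains are capped off to closed $2$--chains; changing these produces canonically isomorphic complexes, and those canonical identifications (which I would compute as diagonal $\Lambda$--linear maps recording how the capping near each generator shifts $A_\omega$) must be folded into the distinguished morphisms. On the other hand the triangle and continuation maps are themselves only well defined up to multiplication by a single $z^x\in\Lambda$, coming from the residual global ambiguity in choosing the top generator and in the capping conventions --- precisely the ambiguity $\cC$ is designed to absorb, so that $\Phi^i_j$ makes sense as a morphism in $\cC$. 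Granting these well-definedness statements, the composition law $\Phi^j_k\circ\Phi^i_j\doteq\Phi^i_k$ reduces, by connectedness of the graph of Heegaard diagrams, to a finite list of relations: associativity of the triangle maps (the usual degeneration argument for holomorphic quadrilaterals, now with $z$--weights, again resting on additivity of $A_\omega$ under polygon gluing), commutation of handleslides supported in disjoint regions of $\Sigma$, commutation of handleslides with stabilization, and that an isotopy followed by its reverse is $\doteq\mathrm{id}$ --- each the perturbed analogue of the corresponding verification in \cite{OS04a, JM08}. The relation $\Phi^i_i\doteq\mathrm{id}$ is then immediate.

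I expect the main obstacle to be exactly this weight-bookkeeping: proving $A_\omega$ is additive under every gluing of domains that arises (disk to disk, triangle to disk, quadrilateral degenerating into two triangles) and, simultaneously, pinning down which choices in the definition of $A_\omega$ and of the polygon maps shift a map by a generator-independent power of $z$ (hence are invisible in $\cC$) versus which produce the diagonal identifications that must be tracked. Everything else --- connectivity of the move graph, the list of relations, and the analytic input on continuation maps and polygon counts --- is inherited verbatim from the unperturbed naturality package, and once the $A_\omega$--bookkeeping is in place the transitive--system axioms follow formally.
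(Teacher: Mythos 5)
The paper does not prove this statement; it is imported verbatim from Juh\'asz--Zemke \cite[Theorem~3.1]{JZ18}, so there is no in-paper argument to compare your sketch against. That said, your sketch is a faithful summary of the strategy used in that reference: weight the continuation, triangle, and stabilization maps by $z^{A_\omega}$, establish additivity of $A_\omega$ under the relevant polygon gluings, and package the resulting data as a Baldwin--Sivek projective transitive system so that the residual overall $z^x$ ambiguity (from capping conventions and choice of top generator) is absorbed by the projectivization. The one place you compress substantially is the underlying naturality machinery: the precise generating set of elementary moves and the complete list of relations among sequences of moves with fixed endpoints is not a light verification, but is supplied by the framework of Juh\'asz--Thurston--Zemke \cite{JTZ12}, on which \cite{JZ18} explicitly relies. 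Your phrase ``a finite list of `relations between moves'\,'' is a placeholder for that entire package rather than something one would re-derive in the course of this proof.
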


We also have functoriality as in unperturbed Heegaard Floer homology. 

\begin{theorem}[Ozsv\'ath--Szab\'o~\protect{\cite[Section~3.1]{OS04c}}]
    Let $W$ be a cobordism from $Y_1$ to $Y_2$. Suppose $\omega$ is a closed $2$-form on $W$ and $\frs\in\Spin^c(W)$ is a $\Spin^c$ structure on $W$. Then the cobordism map
    \[
        F^\circ_{W,\frs;\omega}\colon HF^\circ(Y_1,\frs|_{Y_1};{\omega|_{Y_1}})\to HF^\circ(Y_2,\frs|_{Y_2};{\omega|_{Y_2}})
    \]
    is well-defined up to overall multiplication by $z^x$ for $x\in\R$.
\end{theorem}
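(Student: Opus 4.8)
The plan is to mimic Ozsv\'ath and Szab\'o's construction of cobordism maps from \cite{OS04c}, carrying along the Novikov weights at every step, and to reduce well-definedness to the projective transitive system of the previous theorem, \cite[Theorem~3.1]{JZ18}. First I would fix a handle decomposition of $W$ relative to $Y_1$ and factor $W = W_k\circ\cdots\circ W_1$, where each $W_j$ is obtained by attaching $4$--dimensional handles of a single index $1$, $2$, or $3$. To a $1$--handle (resp.\ $3$--handle) cobordism one assigns the usual algebraic map: at the level of diagrams one connect-sums the Heegaard surface with a genus-one piece carrying a small $\alpha$--$\beta$ pair that cobounds two disks of arbitrarily small $\omega$--area, so that $\xs\mapsto \xs\times\theta^{\pm}$ is a chain map for the perturbed differential — the new periodic and disk domains introduced pick up area contributions that can be made trivial, or else are absorbed into the $z^x$ ambiguity we are allowed. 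For a $2$--handle cobordism, attached along a framed link, one uses the perturbed holomorphic triangle map associated to a subordinate triple diagram $(\Sigma,\alphas,\betas,\boldsymbol{\gamma},w)$: count pseudo-holomorphic triangles $\psi$ in the $\Spin^c$ class determined by $\frs|_{W_j}$, each weighted by $z^{A_\omega(\psi)}$, where $A_\omega(\psi)$ is the $\omega$--integral of the capped $2$--chain $\widetilde{\cD}(\psi)$ built from the domain of $\psi$ and the relevant compressing disks exactly as in Section~\ref{perturbedHF}. Composing these maps (summing over the $\Spin^c$ structures on $W$ restricting to $\frs$ when necessary) produces a candidate for $F^\circ_{W,\frs;\omega}$.

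The first thing to verify is that these building blocks are chain maps. For the triangle map one revisits the standard degeneration and gluing arguments for one-parameter families of holomorphic triangles and checks that the area weights are additive under the relevant gluings: when a triangle degenerates into a triangle and a disk (or into a boundary-degenerate configuration), the capped $2$--chains glue to one whose $\omega$--integral is the sum of the pieces' $\omega$--integrals, so the weighted count of ends of the moduli space still vanishes mod $2$. This is the perturbed analogue of $\partial\circ F = F\circ\partial$, and the only genuinely new ingredient relative to \cite{OS04c} is the bookkeeping of the capping disks, which is precisely the bookkeeping already carried out for the differential in Section~\ref{perturbedHF} and in \cite[\S3]{JZ18}; it goes through with no change. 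Finiteness of the Novikov sums follows from $\frs$--admissibility of the multi-diagram together with the energy filtration on moduli spaces, exactly as in the closed case.

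Invariance is then reduced to a finite list of moves: isotopy and handle slides among handles of a fixed index, handle creation/cancellation between adjacent indices, and changes of the subordinate Heegaard triples (stabilization, isotopy, handle slide, change of generic almost complex structure). For each move one shows the induced change in the composite is multiplication by some $z^x$. The diagram-level moves are handled by \cite[Theorem~3.1]{JZ18}: the identifications of $HF^\circ(Y_i,\frs|_{Y_i};\omega|_{Y_i})$ for different $(\cH,J)$ are canonical only up to $\doteq$, and with respect to these the triangle, $1$--handle and $3$--handle maps intertwine up to $\doteq$. The four-dimensional moves — commuting distant handles, handle cancellation, and the associativity relation among $2$--handle maps needed to reorganize them — reduce to associativity of the weighted triangle and quadrilateral counts, where once more the only point is that the $\omega$--weights add correctly; any residual global area shift is harmless because source and target carry only a projective structure.

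The main obstacle I anticipate is exactly this weight bookkeeping in the four-dimensional invariance moves — in particular the associativity (square) relation for compositions of $2$--handle maps and the handle-cancellation relation, where one must match the $\omega$--area of a capped quadrilateral domain with the sum of the $\omega$--areas of the two capped triangles it splits into, while tracking how the choice of compressing disks on the intermediate handlebody affects the capping. The rest is a routine decoration of the Ozsv\'ath--Szab\'o construction; but this additivity, together with maintaining perturbed admissibility of every multi-diagram used, is where the content sits. It is made manageable by the fact that we only claim well-definedness up to $z^x$: any discrepancy that is a global constant, rather than one depending on the generator or the $\Spin^c$ structure, is automatically absorbed, which is what makes the statement true as phrased.
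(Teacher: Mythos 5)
The paper does not prove this statement; it is quoted as a known result with a citation to Ozsv\'ath--Szab\'o~\cite[Section~3.1]{OS04c}, and the precise projective-naturality formulation is the one worked out by Juh\'asz--Zemke. Your sketch is a reasonable and essentially accurate outline of how that proof goes in the literature --- decorate the handle-by-handle Ozsv\'ath--Szab\'o construction with Novikov weights $z^{A_\omega(\cdot)}$, verify additivity of the capped-domain $\omega$--areas under the degenerations that give the chain-map, associativity, and handle-cancellation relations, and absorb the remaining ambiguity from the choice of capping $2$--chains, Heegaard multi-diagrams, and almost complex structures into the $\doteq$ equivalence using the projective transitive system framework. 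You correctly identify the crux: the only genuinely new content relative to the untwisted setting is that every identity used in invariance must be promoted to a weighted identity with a uniform $z^x$ discrepancy rather than a generator-dependent one. One small caution: for the $1$-- and $3$--handle maps the ``arbitrarily small area'' phrasing is not quite what one wants --- rather, one observes that the new $\alpha$--$\beta$ pair can be arranged to cobound disks on which $\omega$ is exact (indeed one works with a representative $2$--form supported away from the new genus), so the weights introduced there really are trivial, not merely small. Beyond that, your sketch matches the standard argument.
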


Consider $\omega\in\Omega^2(W,\partial{W})$ which is a closed $2$-form on $W$ compactly supported in the interior of $W$. We will say such an $\omega$ is a $2$-form on $(W,\partial{W})$. Let $W$ be a cobordism from $Y_0$ to $Y_1$ equipped with a closed $2$-form $\omega$ and $\frS\subseteq\Spin^c(W)$ a subset of $\Spin^c$ structures on $W$. From now on, we always assume that $(W,\omega,\frS)$ has one of the following properties.
\begin{itemize}
    \item each $\frs\in\frS$ has the same restriction to $\partial{W}$, or
    \item $\omega$ is a closed $2$-form on $(W,\partial{W})$.
\end{itemize}
If $\circ\in\{\infty, -\}$, we further assume that there exists only finitely many $\frs\in\frS$ such that $F^\circ_{W,\frs;\omega}$ is non-vanishing. Then, there exists a cobordism map
\[
    F^\circ_{W,\frS;\omega}: HF^\circ(Y_1;{\omega|_{Y_1}})\rightarrow HF^\circ(Y_2;{\omega|_{Y_2}}),
\]
which is also well-defined up to overall multiplication by $z^x$ for $x\in\R$. Although addition is not well-defined in projective systems, we may find representatives of $F^\circ_{W,\frS;\omega}$ for $\frs\in\frS$ so that
\[
F^\circ_{W,\frS;\omega} \doteq \sum_{\frs\in \frS} F^\circ_{W,\frs;\omega}.
\]

There is also a $\Spin^c$ composition law for cobordism maps.
\begin{lemma}[Ozsv\'{a}th--Szab\'{o} \protect{\cite[Theorem~3.4]{OS06}}]\label{prop:composition}
    Let $W$ be a cobordism which is decomposed into $W = W_1 \cup W_2$. Suppose that $\omega$ is a closed $2$-form on $(W,\partial{W})$, and $\frS_1 \subset \Spin^c(W_1)$ and $\frS_2 \subset \Spin^c(W_2)$ are subsets of $\Spin^c$ structures satisfying the properties above. Let
    \[
    \frS(W,\frS_1,\frS_2) = \{\, \frs \in \Spin^c(W) :
        \frs|_{W_1} \in \frS_1 \text{ and } \frs|_{W_2} \in \frS_2 \,\}.
    \]
    Then
    \[
    F^\circ_{W,\frS(W,\frS_1,\frS_2);\omega}
    \doteq F^\circ_{W_2, \frS_2; \omega|_{W_2}} \circ
    F^\circ_{W_1, \frS_1; \omega|_{W_1}}.
    \]    
\end{lemma}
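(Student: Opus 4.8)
The plan is to follow the proof of the unperturbed $\Spin^c$ composition law \cite[Theorem~3.4]{OS06}, carrying the Novikov weights through it and keeping track of the projective ambiguity. First I would reduce to the single $\Spin^c$ version: it suffices to show that for any $\frs_i\in\Spin^c(W_i)$,
\[
F^\circ_{W_2,\frs_2;\omega|_{W_2}}\circ F^\circ_{W_1,\frs_1;\omega|_{W_1}}\ \doteq\ \sum_{\substack{\frs\in\Spin^c(W)\\ \frs|_{W_1}=\frs_1,\ \frs|_{W_2}=\frs_2}}F^\circ_{W,\frs;\omega}.
\]
Granting this, pick honest $\Lambda$--linear representatives, use bilinearity of composition, and observe that $\frs\mapsto(\frs|_{W_1},\frs|_{W_2})$ is injective on $\frS(W,\frS_1,\frS_2)$, so each such $\frs$ contributes exactly once to the double sum $\sum_{\frs_1\in\frS_1}\sum_{\frs_2\in\frS_2}\bigl(\cdots\bigr)$. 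Combined with the defining relations $F^\circ_{W_i,\frS_i;\omega|_{W_i}}\doteq\sum_{\frs_i\in\frS_i}F^\circ_{W_i,\frs_i;\omega|_{W_i}}$ and $F^\circ_{W,\frS(W,\frS_1,\frS_2);\omega}\doteq\sum_{\frs\in\frS(W,\frS_1,\frS_2)}F^\circ_{W,\frs;\omega}$, this gives the lemma. When $\circ\in\{\infty,-\}$, the standing hypothesis that only finitely many $\frs$ give a non-vanishing map is exactly what makes the displayed sum finite and the rearrangements of these projectively normalized sums legitimate.

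To prove the single $\Spin^c$ identity I would fix a $w$--pointed Heegaard quadruple $(\Sigma,\alphas,\betas,\boldsymbol{\gamma},\boldsymbol{\delta},w)$ adapted to the decomposition $W=W_1\cup_{Y_m}W_2$ along its middle level $Y_m$, so that (after the usual stabilizations) $(\alphas,\betas)$, $(\alphas,\boldsymbol{\gamma})$ and $(\boldsymbol{\gamma},\boldsymbol{\delta})$ are admissible diagrams for the two outer ends of $W$ and for $Y_m$, while $(\betas,\boldsymbol{\gamma})$, $(\alphas,\boldsymbol{\delta})$ and $(\betas,\boldsymbol{\delta})$ present connected sums of copies of $S^1\times S^2$. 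With respect to such a diagram, $F^\circ_{W_1,\frs_1;\omega|_{W_1}}$ is, up to pairing with the distinguished top generators of the $S^1\times S^2$ factors, the count of $\alphas\betas\boldsymbol{\gamma}$--triangles $\psi$ weighted by $z^{A_{\omega|_{W_1}}(\psi)}$, the map $F^\circ_{W_2,\frs_2;\omega|_{W_2}}$ is the analogous count of $\alphas\boldsymbol{\gamma}\boldsymbol{\delta}$--triangles weighted by $z^{A_{\omega|_{W_2}}(\psi)}$, and the right--hand side above is the count of $\alphas\betas\boldsymbol{\delta}$--triangles weighted by $z^{A_\omega(\psi)}$. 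Exactly as in the unperturbed setting, one introduces the map counting holomorphic squares on $\alphas,\betas,\boldsymbol{\gamma},\boldsymbol{\delta}$ weighted by $z^{A_\omega(\psi)}$ together with the one--parameter family of conformal structures on the square: one end of the resulting $1$--dimensional moduli spaces degenerates a square into an $\alphas\betas\boldsymbol{\gamma}$--triangle glued along $Y_m$ to an $\alphas\boldsymbol{\gamma}\boldsymbol{\delta}$--triangle, which after pairing reproduces $F^\circ_{W_2,\frs_2}\circ F^\circ_{W_1,\frs_1}$; the opposite end degenerates a square into an $\alphas\betas\boldsymbol{\delta}$--triangle glued to a $\betas\boldsymbol{\gamma}\boldsymbol{\delta}$--triangle, the latter picking out the top generator, which reproduces $\sum_{\frs}F^\circ_{W,\frs}$; and the remaining boundary strata (broken strips and disk bubbles) assemble into a chain homotopy. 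Counting ends then gives the single $\Spin^c$ identity, up to the overall factor $z^x$ built into the projective maps, with the $\Spin^c$ bookkeeping word for word as in \cite{OS06}.

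The step I expect to be the main obstacle---and the only genuinely new point beyond the unperturbed proof---is the additivity of the Novikov weight under these degenerations. Concretely, if a square domain $\psi$ for the quadruple breaks as $\psi=\psi_1\ast\psi_2$ with $\psi_1$ an $\alphas\betas\boldsymbol{\gamma}$--triangle and $\psi_2$ an $\alphas\boldsymbol{\gamma}\boldsymbol{\delta}$--triangle meeting along a common intermediate intersection point on the handlebody $H_{\boldsymbol{\gamma}}$, one must show that
\[
A_\omega(\psi)=A_{\omega|_{W_1}}(\psi_1)+A_{\omega|_{W_2}}(\psi_2),
\]
and similarly for the opposite degeneration. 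To see this I would choose the systems of compressing disks $D_{\alphas},D_{\betas},D_{\boldsymbol{\gamma}},D_{\boldsymbol{\delta}}$ compatibly, so that the closed $2$--chains $\widetilde{\cD}$ obtained by capping the triangle and square domains are related by cutting and pasting along $\boldsymbol{\gamma}$ and the disks $D_{\boldsymbol{\gamma}}$: then $\widetilde{\cD}(\psi)$ is homologous in $W$ to the union of $\widetilde{\cD}(\psi_1)$, pushed into $W_1$, and $\widetilde{\cD}(\psi_2)$, pushed into $W_2$, and additivity of $\psi\mapsto\int_{\widetilde{\cD}(\psi)}\omega$ follows from $d\omega=0$ together with the facts that $\omega|_{W_1}$ and $\omega|_{W_2}$ agree with $\omega$ on their respective pieces and restrict to the common form $\omega|_{Y_m}$ on $Y_m$. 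Here the hypothesis $\omega\in\Omega^2(W,\partial W)$ enters: it guarantees that these restrictions are honest closed forms and rules out a boundary correction term. Once this additivity is in place the rest is a weighted transcription of the classical argument, and passing from honest maps to the projective transitive system changes nothing beyond replacing equalities by $\doteq$.
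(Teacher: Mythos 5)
The paper does not prove this lemma: it is cited to \cite[Theorem~3.4]{OS06}, with the perturbed-coefficient extension implicit in \cite{OS04a,JM08,JZ18}, so there is no internal proof to compare your sketch against. Taken on its own terms, your outline is sound and correctly isolates the one genuinely new step beyond the untwisted composition law, namely additivity of the Novikov exponent $A_\omega$ under concatenation of domains at the middle level $Y_m$. A few caveats are worth recording. The reduction to single $\Spin^c$ structures contains a small slip: the map $\frs\mapsto(\frs|_{W_1},\frs|_{W_2})$ is \emph{not} injective on $\frS(W,\frS_1,\frS_2)$ in general, since its fibers are affine over the image of $H^1(Y_m)\to H^2(W)$; fortunately your bookkeeping does not need injectivity, only the tautology that each $\frs$ determines a unique pair $(\frs|_{W_1},\frs|_{W_2})$, so the rearrangement of the double sum is still valid. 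The proof of \cite[Theorem~3.4]{OS06} is also not literally a single quadruple argument: $W_1$ and $W_2$ must first be decomposed into $1$-, $2$-, and $3$-handle cobordisms, the corresponding maps commuted and collected, and only then is associativity-via-quadruples invoked for the $2$-handle pieces. Your sketch jumps straight to the quadruple, which is fine for identifying where the Novikov weights actually intervene, but elides most of the length of the OS06 argument. Finally, the role you assign to the hypothesis $\omega\in\Omega^2(W,\partial W)$ is off: restrictions of closed forms are automatically closed, so that is not what the hypothesis buys. Its purpose, stated just above the lemma in the paper, is one of two alternative conditions making the summed map $F^\circ_{W,\frS;\omega}$ well-defined at all: if $\omega$ vanishes near $\partial W$ the boundary Floer groups are unperturbed, so one may sum over $\frs\in\frS$ with varying restriction to $\partial W$ without ambiguity in the target. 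The additivity of $A_\omega$ at a degeneration along $Y_m$ is a local statement about cappings that does not depend on the behavior of $\omega$ near $\partial W$.
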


The cobordism map is unchanged when we replace $\omega$ by $\omega+d\eta$ for any $\eta$.
\begin{lemma}[Juh\'{a}sz--Zemke \protect{\cite[Lemma~3.3]{JZ18}}]\label{lem:changing2form}
    Let $W$ be a cobordism from $Y_1$ to $Y_2$, $\frS \subset \Spin^c(W)$ a subset of $\Spin^c$ structures on $W$, $\omega$ a closed $2$-form on $(W,\partial{W})$, and $\eta$ a 1-form on $(W,\partial{W})$. Then
    \[
    F^\circ_{W, \frS; \omega} \dot{=} F_{W, \frS; \omega + d\eta}.
    \]
    If $\omega$ does not vanish on $Y_1$ and $Y_2$, then the above equation holds when restricted to fixed $\Spin^c$ structures on $Y_1$ and $Y_2$.
\end{lemma}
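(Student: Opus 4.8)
\emph{Proof proposal.} The plan is to trace exactly where the $2$--form enters the construction of $F^\circ_{W,\frS;\omega}$ and to check that replacing $\omega$ by $\omega+d\eta$ leaves every weighted holomorphic--curve count unchanged, so that the two maps coincide even at chain level for fixed auxiliary data. First I would note that, since $\eta$ is supported in the interior of $W$, the forms $\omega$ and $\omega+d\eta$ agree near $\partial W=-Y_1\cup Y_2$; in particular they induce literally the same perturbed groups $HF^\circ(Y_i;\omega|_{Y_i})$, so both sides of the asserted relation are morphisms between the same objects and the statement is meaningful.

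Next I would recall how $\omega$ enters the map. Fixing a handle decomposition of $W$ and an associated Heegaard multi-diagram, $F^\circ_{W,\frS;\omega}$ is a composite whose $1$-- and $3$--handle factors are $\omega$--independent algebraic maps, while the remaining, $\omega$--dependent, ingredient is a count of holomorphic polygons $\psi$ with weights $z^{A_\omega(\psi)}$; here $A_\omega(\psi)=\langle\omega,\widetilde{\cD}(\psi)\rangle$, where $\widetilde{\cD}(\psi)$ is the $2$--chain in $W$ obtained from the polygon domain on $\Sigma$ by capping off with the cores of the handlebodies. (Alternatively one can peel off the $2$--handles one at a time using the composition law of Lemma~\ref{prop:composition}, at the cost of carrying along a splitting of the $\Spin^c$ sets over the intermediate levels and an adapted decomposition of $\eta$, which does not split conveniently; I would avoid this.)

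The heart of the argument is then the identity $A_{\omega+d\eta}(\psi)=A_\omega(\psi)$ for every polygon class $\psi$, which amounts to $\langle d\eta,\widetilde{\cD}(\psi)\rangle=0$. Since $\eta$ is a $1$--form compactly supported in the interior of $W$, the class $[d\eta]$ vanishes in $H^2(W;\R)$ (it is exact) and also in $H^2(W,\partial W;\R)\cong H^2_c(\mathrm{int}\,W;\R)$ (it is the differential of a compactly supported $1$--form); equivalently, $\int_{\widetilde{\cD}(\psi)}d\eta=\int_{\partial\widetilde{\cD}(\psi)}\eta=0$ by Stokes' theorem, the boundary $1$--cycle being pushed into $\partial W$ where $\eta$ vanishes. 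Hence, for any fixed choice of Heegaard data, $F^\circ_{W,\frS;\omega}$ and $F^\circ_{W,\frS;\omega+d\eta}$ are given by the identical chain--level formula, and in particular $F^\circ_{W,\frS;\omega}\doteq F^\circ_{W,\frS;\omega+d\eta}$ as morphisms in the projective transitive system.

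For the refinement I would redo the computation while keeping track of the boundary $1$--cycles $\partial\widetilde{\cD}(\psi)\subset\partial W$: once one restricts to fixed $\Spin^c$ structures $\frt_i$ on $Y_i$, the remaining ambiguity in $\psi$ is modification by a periodic domain of $Y_1$ or $Y_2$, which changes $A_{\omega+d\eta}(\psi)-A_\omega(\psi)$ only by a period of $d\eta|_{Y_i}=d(\eta|_{Y_i})=0$; the hypothesis $\omega|_{Y_i}\neq0$ is precisely what makes the $\Spin^c$--decomposition of $HF^\circ(Y_i;\omega|_{Y_i})$ compatible with these cycles, the relevant periods of $\omega$ itself being already absorbed into the $\Lambda$--module structure. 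I expect the main obstacle to be exactly this last point together with the bookkeeping of the $2$--chain $\widetilde{\cD}(\psi)$ in the cobordism (rather than closed) setting: one must verify that the pairing with a form supported in the interior factors through ordinary, respectively relative, homology as needed, and that the $\Spin^c$--splitting one uses does not secretly reintroduce a $\psi$--dependent power of $z$. The analytic input is unchanged from the unperturbed theory and enters only through the definition of $F^\circ_{W,\frS;\omega}$ already in place.
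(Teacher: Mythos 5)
The paper does not prove this lemma; it is quoted directly from Juh\'asz--Zemke \cite[Lemma~3.3]{JZ18}, so there is no in-paper argument to compare against. Your strategy is nevertheless the natural one and likely matches the spirit of the cited proof: the perturbed cobordism maps weight counts by $z^{A_\omega}$, and $A_{\omega+d\eta}-A_\omega$ is an integral of $d\eta$ over a $2$--chain whose boundary lies in $\partial W$, where $\eta$ vanishes, so Stokes kills the difference.

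Two points need attention, though. First, you announce that you will ``avoid'' the handle-by-handle composition, but $F^\circ_{W,\frS;\omega}$ is \emph{defined} as a composite of $1$--, $2$-- and $3$--handle maps (cf.\ Lemma~\ref{prop:composition}); there is no one-shot holomorphic-polygon formula for a general cobordism. So you cannot sidestep the composition law --- you must run the Stokes argument one elementary cobordism at a time and verify, for each triangle class occurring in a $2$--handle map, that the capped domain $\widetilde{\cD}(\psi)$ can genuinely be arranged to have boundary contained in $\partial W$. You flag exactly this at the end but do not settle it, and that is where the real work in the proof lies. Second, your treatment of the refinement is tautological: since $\eta$ is compactly supported in the interior, $\eta|_{Y_i}=0$ identically, so ``$d(\eta|_{Y_i})=0$'' is empty and says nothing about the role of the hypothesis $\omega|_{Y_i}\neq 0$. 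In fact, as transcribed in the present paper the refinement clause is vacuous, because $\omega$ is also required to lie in $\Omega^2(W,\partial W)$ and hence already vanishes on $Y_1$ and $Y_2$; the clause has content only in the original formulation in \cite{JZ18}, where $\omega$ is an arbitrary closed form on $W$. In that setting the genuine issue --- whether the $\Spin^c$--graded components of the two cobordism maps might differ by $\Spin^c$--dependent monomials rather than by a single overall $z^x$ --- is what the hypothesis is controlling, and your argument does not engage with it.
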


If $\omegas=\{\omega_1,\ldots,\omega_n\}$ is an $n$-tuple of closed $2$-forms on a $3$--manifold $Y$, we can define a $\Lambda_n[U]$-module $HF^\circ(Y,\frs;{\omegas})$ as above, where $\Lambda_n$ is the $n$-variable Novikov ring over $\F_2$. All the theorems and lemmas in this section hold for this version.

Let $\a = (a_1,\ldots,a_n)$ be an $n$-tuple of integers. We will use the notation
\[
    \z^{\a} := z_1^{a_1} \cdots z_n^{a_n}.
\]

\begin{lemma}[Juh\'{a}sz--Zemke \protect{\cite[Lemma~3.4]{JZ18}}]\label{lem:triviallytwisted}
    Let $W$ be a cobordism from $Y_1$ to $Y_2$ and $\omegas=\{\omega_1,\ldots,\omega_n\}$ be an $n$-tuple of closed $2$-forms on $(W,\partial{W})$. Suppose $\frS \subseteq \Spin^c(W)$ is a subset of $\Spin^c$ structures on $W$. If $\circ \in \{-,\infty\}$, we further assume that there are only finitely many $\frs \in \frS$ where $F^\circ_{W,\frs} \neq 0$. Fix an arbitrary $\Spin^c$ structure $\frs_0 \in \Spin^c(W)$. Then,
    \[
        F^\circ_{W,\frS;\omegas}\doteq\sum_{s\in\frS} \z^{\langle i_*(\frs-\frs_0)\cup[\omegas],[W,\partial W]\rangle} \cdot F^\circ_{W,\frs},
    \]
where $i_*:H^2(W;\Z)\to H^2(W;\R)$ is induced by the inclusion $i:\Z\to\R$. 
\end{lemma}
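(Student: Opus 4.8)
The plan is to reduce the statement, via the composition law for cobordism maps and the definition of the $4$--manifold maps through handle attachments, to a single computation on a $2$--handle cobordism, for which the essential input is that the $\omega$-area function $A_\omega$ is affine over $\Spin^c(W)$.

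First, since each $\omega_i$ is a closed $2$--form on $(W,\partial W)$ it restricts to $0$ on $Y_1$ and on $Y_2$, so $HF^\circ(Y_k;\omegas|_{Y_k})$ is simply $HF^\circ(Y_k)\otimes_{\F_2}\Lambda_n$ as a projective transitive system; the claim is thus a comparison between the $\omegas$-twisted cobordism map and the untwisted map tensored up. Using the decomposition of the total map into its $\Spin^c$ pieces (recalled just above in the form $F^\circ_{W,\frS;\omega}\doteq\sum_{\frs\in\frS}F^\circ_{W,\frs;\omega}$, with a compatible choice of representatives), and likewise for the untwisted maps, it is enough to prove for each fixed $\frs\in\frS$ that
\[
F^\circ_{W,\frs;\omegas}\ \doteq\ \z^{\langle i_*(\frs-\frs_0)\cup[\omegas],[W,\partial W]\rangle}\cdot F^\circ_{W,\frs}.
\]
It costs nothing to set $n=1$ and write $\omega=\omega_1$, the general case being identical with $\Lambda$ replaced by $\Lambda_n$.

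Next I would invoke that $F^\circ_{W,\frs}$ (and its $\omega$-twisted analogue) is, by definition, a composition of the maps induced by the $1$--, $2$--, and $3$--handles of a handle decomposition of $W$ rel $Y_1$. Applying Lemma~\ref{prop:composition} $\Spin^c$-structure by $\Spin^c$-structure, and using that the pairing $\langle i_*(\frs-\frs_0)\cup[\omega],[W,\partial W]\rangle$ is additive under a splitting $W=W'\cup W''$ (naturality of cup products together with the fact that $[\omega]$ is supported in the interior), one reduces the displayed formula to the case of an elementary cobordism. For a $1$--handle or $3$--handle the induced map is the standard ($S^1\times S^2$-)connected sum map and the claimed twisting factor is checked by direct inspection; the substantive case is a single $2$--handle. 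Presenting it by a Heegaard triple $(\Sigma,\alphas,\betas,\gamma)$ subordinate to a bouquet for the attaching link, the $\omega$-twisted $2$--handle map sends $\xs\mapsto\sum_{\ys}\sum_{\psi}(\#\cM(\psi))\,z^{A_\omega(\psi)}U^{n_w(\psi)}\,\ys$, with $A_\omega(\psi)=\int_{\widetilde{\cD}(\psi)}\omega$. The key point is that two homotopy classes of triangles $\psi,\psi'$ carrying the same $\Spin^c$ structure $\frs$ differ by a closed $2$--chain $\widetilde{\cD}(\psi)-\widetilde{\cD}(\psi')$ whose homology class is torsion, since its Poincar\'e dual equals $\frs_w(\psi)-\frs_w(\psi')=0$, and hence has zero $\omega$-period; so $A_\omega(\psi)$ depends only on $\frs$, say $A_\omega(\psi)=c(\frs)$, the accompanying $U$-powers agree, and $z^{c(\frs)}$ can be pulled out of the sum to give $F^\circ_{W,\frs;\omega}\doteq z^{c(\frs)}F^\circ_{W,\frs}$, with $c(\frs)$ well defined modulo the $\doteq$ ambiguity.

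It then remains to identify $c(\frs)$ topologically. Fixing a reference triangle $\psi_0$ with $\frs_w(\psi_0)=\frs_0$, the standard identity $\frs_w(\psi)-\frs_w(\psi_0)=\mathrm{PD}[\widetilde{\cD}(\psi)-\widetilde{\cD}(\psi_0)]$ gives
\[
c(\frs)-c(\frs_0)=\int_{\widetilde{\cD}(\psi)-\widetilde{\cD}(\psi_0)}\omega=\langle i_*(\frs-\frs_0)\cup[\omega],[W,\partial W]\rangle,
\]
using $[\omega]\in H^2(W,\partial W;\R)$, commutativity of cup products of degree-$2$ classes, and that the capping $2$--chains lie in the interior of $W$; since the left side is ambiguous precisely by the additive constant $c(\frs_0)$, which is exactly the $\doteq$ indeterminacy, this gives the per-$\Spin^c$ formula, and assembling the elementary pieces (exponents add, untwisted composition law) completes the proof. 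The main obstacle is the middle step: making rigorous that within a fixed $\Spin^c$ structure all contributing holomorphic triangles—and, in the $\infty$ and $-$ flavors, all the $U$-powers accompanying them—carry the same $\omega$-weight, so that $z^{c(\frs)}$ genuinely factors out. This is the usual bookkeeping with periodic domains, basepoints, and torsion classes; the holomorphic-curve input is entirely standard and the affineness of $A_\omega$ over $\Spin^c(W)$ is already built into the twisted-coefficient formalism of Ozsv\'ath--Szab\'o and Jabuka--Mark. One should also verify in the $\circ\in\{\infty,-\}$ cases that the given finiteness of $\{\frs\in\frS:F^\circ_{W,\frs}\neq0\}$ persists for each elementary piece so that every composed map is defined.
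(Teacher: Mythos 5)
The paper does not prove this lemma: it is cited directly from Juh\'asz--Zemke as \cite[Lemma~3.4]{JZ18} without an argument, so there is no in-paper proof to compare yours against. Your proposed proof nonetheless has the right general shape --- reduce to a single $\Spin^c$ structure and $n=1$, split along a handle decomposition, and analyze the $2$--handle triangle map by showing $A_\omega$ is constant on the fibers of $\frs_w$ and then computing the shift.

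There is, however, a genuine error in the pivotal step. You assert that two triangles $\psi,\psi'$ with $\frs_w(\psi)=\frs_w(\psi')$ differ by a closed $2$--chain whose homology class is \emph{torsion} because its Poincar\'e dual vanishes, and that the $\omega$-period therefore vanishes. For a $4$--manifold with boundary this is false. Lefschetz duality identifies $H_2(W;\Z)$ with $H^2(W,\partial W;\Z)$, and $\frs_w(\psi)-\frs_w(\psi')=0$ forces only the image of $\mathrm{PD}[\widetilde{\cD}(\psi)-\widetilde{\cD}(\psi')]$ under the natural map $H^2(W,\partial W;\Z)\to H^2(W;\Z)$ to vanish; equivalently, $[\widetilde{\cD}(\psi)-\widetilde{\cD}(\psi')]$ lies in the image of $H_2(\partial W;\Z)\to H_2(W;\Z)$, which in general has infinite order (take $W=\Sigma\times[0,1]$ for a closed surface $\Sigma$). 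The $\omega$-period does vanish, but the correct reason is that $[\omega]$ is a \emph{relative} class in $H^2(W,\partial W;\R)$: pairing it against a cycle pushed in from $\partial W$ gives zero. This is precisely where the hypothesis that $\omega$ is a $2$--form on the pair $(W,\partial W)$ enters, and it must be invoked here; without it the lemma is simply false. The appeal to torsion both is wrong and obscures the role of the boundary condition. With that correction, your step identifying $c(\frs)-c(\frs_0)$ is the same pairing computation applied to a difference not in the kernel, and the rest of the outline --- the handle-by-handle reduction via the composition law, the additivity of the exponent across a splitting, and the need to fix one reference triangle $\psi_0$ with $\frs_w(\psi_0)=\frs_0$ so that the $\doteq$ indeterminacy is uniform across all $\frs\in\frS$ before summing --- is a reasonable, if still sketchy, plan.
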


%%%%% Parings and duality %%%%%%%%

In \cite{OS06}, Ozsv\'{a}th--Szab\'{o} defined a pairing for Heegaard Floer homology

\[
    \langle\cdot,\cdot\rangle: HF^+(Y,\frs)\otimes HF^-(-Y,\frs) \rightarrow \Z,
\]
which is non-degenerate for torsion $\Spin^c$ structures and satisfies a certain duality property.

In \cite[Sections~4 and~8.1]{JM08}, Jabuka and Mark extended this pairing to Heegaard Floer homology with twisted coefficients and to perturbed Heegaard Floer homology. %We slightly change the statement for perturbed Heegaard Floer homology. 
Let $M$ be a $\Lambda$-module. We denote the additive group $M$ equipped with conjugate module structure by $\overline{M}$, where multiplication is given by
\[
    z \otimes a \mapsto z^{-1} \cdot a,    
\]
for $z\not= 0 \in \Lambda$ and $a \in \overline{M}$.

\begin{theorem}[Jabuka--Mark \cite{JM08}]\label{thm:pairing}
    There is a non-degenerate pairing
    \[
        \langle\cdot,\cdot\rangle:HF^+(Y,\frs;\omega)\otimes_{\Lambda}\overline{HF^-(-Y,\frs;\omega)}\rightarrow\Lambda.
    \]     
    The pairing satisfies $\langle g a, b\rangle = g\langle a, b\rangle =\langle  a, {g} b\rangle$ for $g\in \Lambda$. 
\end{theorem}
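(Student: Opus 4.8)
The plan is to adapt Ozsv\'ath--Szab\'o's construction of the duality pairing for ordinary Heegaard Floer homology \cite{OS06}, carrying the Novikov coefficients along at the chain level. First I would fix an $\frs$-admissible pointed Heegaard diagram $\cH=(\Sigma,\alphas,\betas,w)$ for $Y$ together with a generic $J$. The \emph{conjugate} diagram $\overline{\cH}=(-\Sigma,\alphas,\betas,w)$, equipped with the conjugate almost complex structure $\overline{J}$, is an $\frs$-admissible diagram for $-Y$ with the same generators $T_\alpha\cap T_\beta$; moreover each $\phi\in\pi_2(\xs,\ys)$ for $\cH$ corresponds to $\overline{\phi}\in\pi_2(\ys,\xs)$ for $\overline{\cH}$ with $n_w(\overline{\phi})=n_w(\phi)$, $\mu(\overline{\phi})=\mu(\phi)$, the same modulo-$2$ count of the moduli space $\cM(\phi)/\R$, but with $A_\omega(\overline{\phi})=-A_\omega(\phi)$, because reversing the orientation of $\Sigma$ negates the integral of $\omega$ over the closed $2$-chain $\widetilde{\cD}(\phi)$. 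This sign reversal is exactly what forces the conjugate module structure $\overline{HF^-(-Y,\frs;\omega)}$ to appear in the statement.

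Next I would define a chain-level pairing on $CF^\infty(\cH,\frs;\omega)\otimes_\Lambda\overline{CF^\infty(\overline{\cH},\frs;\omega)}$ by setting $\langle U^i\xs,U^j\ys\rangle$ equal to $1\in\Lambda$ when $\xs=\ys$ and $i+j$ equals the fixed shift dictated by the $U$-grading conventions of \cite{OS06}, and $0$ otherwise, extended $\Lambda$-linearly in the first factor and conjugate-$\Lambda$-linearly in the second (so that $z^x$ acting on the second factor comes out as $z^{-x}$), giving $\langle z^x a,b\rangle=z^x\langle a,b\rangle=\langle a,z^x b\rangle$. The crux is the adjointness $\langle\partial^\infty a,b\rangle=\langle a,\overline{\partial}^\infty b\rangle$, where $\overline{\partial}^\infty$ is the differential of $CF^\infty(\overline{\cH},\frs;\omega)$; this follows term by term from the correspondence $\phi\leftrightarrow\overline{\phi}$, the equality of modulo-$2$ moduli counts, and the cancellation $z^{A_\omega(\phi)}\cdot z^{-A_\omega(\phi)}=1$. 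Passing to $CF^+(\cH,\frs;\omega)$ in the first factor and to $CF^-(\overline{\cH},\frs;\omega)$ in the second, the pairing restricts consistently and the adjointness relating $\partial^+$ and $\partial^-$ persists, so it descends to a well-defined pairing $HF^+(Y,\frs;\omega)\otimes_\Lambda\overline{HF^-(-Y,\frs;\omega)}\to\Lambda$; the stated relations $\langle ga,b\rangle=g\langle a,b\rangle=\langle a,gb\rangle$ for $g\in\Lambda$ are then immediate from the chain-level formula together with the definition of $\overline{M}$. Independence of the choice of $(\cH,J)$ follows by checking that the pairing intertwines the projective transitive-system isomorphisms of \cite[Theorem~3.1]{JZ18} for $Y$ with those for $-Y$, so it is canonical up to the overall $z^x$ ambiguity built into that formalism.

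The main obstacle is non-degeneracy. Here I would exploit that the Novikov ring $\Lambda$ is a \emph{field}, so the relevant duality arguments carry no derived terms. At the chain level the pairing is perfect: it identifies $CF^-(\overline{\cH},\frs;\omega)$ with the appropriate $\Lambda$-linear dual complex of $CF^+(\cH,\frs;\omega)$, carrying $\partial^-$ to the transpose of $\partial^+$. Over a field, homology commutes with this dualization, and from this one reads off that the adjoint maps $HF^-(-Y,\frs;\omega)\to\Hom_\Lambda(HF^+(Y,\frs;\omega),\Lambda)$ and $HF^+(Y,\frs;\omega)\to\Hom_\Lambda(HF^-(-Y,\frs;\omega),\Lambda)$ are injective, i.e.\ the pairing has no nontrivial one-sided kernel. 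The delicate, genuinely non-formal point is the interaction of the $U$-action with the $HF^+$ and $HF^-$ truncations --- precisely the place where the hypothesis that $\frs$ be torsion enters in the untwisted integral setting of \cite{OS06}: one must use $\frs$-admissibility to bound the complexes suitably over $\Lambda[U]$ and feed the long exact sequence relating $HF^\infty$, $HF^+$, $HF^-$ through a five-lemma argument so that the truncation step closes up. This is cleanest when $HF^\infty(Y,\frs;\omega)$ happens to be finite-dimensional over $\Lambda$ (e.g.\ in the presence of a suitably non-degenerate $[\omega]|_Y$), where the pairing becomes ordinary finite-dimensional vector-space duality, while in the exact case the argument reduces to \cite{OS06}.
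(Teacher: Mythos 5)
The paper does not actually prove this statement: Theorem~\ref{thm:pairing} is imported wholesale from Jabuka and Mark (the preceding paragraph says explicitly that ``Jabuka and Mark extended this pairing to Heegaard Floer homology with twisted coefficients and to perturbed Heegaard Floer homology,'' pointing to Sections~4 and~8.1 of their paper). So there is no argument in the present paper to compare your proposal against; I can only evaluate the proposal on its own terms. The first two-thirds of your sketch is a sound reconstruction of the expected argument: the conjugate diagram for $-Y$, the sign reversal $A_\omega(\overline{\phi})=-A_\omega(\phi)$ (which is exactly what forces the conjugate module structure $\overline{(\cdot)}$ into the statement), the chain-level adjointness of $\partial^\pm$, and the formal properties $\langle ga,b\rangle = g\langle a,b\rangle = \langle a,gb\rangle$.

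The non-degeneracy step, however, has a genuine gap. You assert that ``at the chain level the pairing is perfect: it identifies $CF^-(\overline{\cH},\frs;\omega)$ with the appropriate $\Lambda$-linear dual complex of $CF^+(\cH,\frs;\omega)$.'' This is false. As $\Lambda$-modules, $CF^+$ and $CF^-$ are each countably infinite direct sums of copies of $\Lambda$; the $\Lambda$-linear dual of a countable direct sum is a countable direct \emph{product}, which is strictly larger. The adjoint map $CF^-\to\Hom_\Lambda(CF^+,\Lambda)$ is injective but far from surjective, so the pairing is non-degenerate at the chain level but not perfect, and ``over a field, homology commutes with dualization'' buys you nothing: an injective chain map need not be injective on homology (consider the inclusion of the complex $0\to\Lambda\to 0$, concentrated in degree $1$, into the acyclic complex $\Lambda\xrightarrow{\mathrm{id}}\Lambda$). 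So the claimed conclusion that the two adjoint maps on homology are injective simply does not follow. The correct argument must use finiteness in an essential way: either the tower decomposition $HF^\pm\cong\mathcal{T}^\pm\oplus HF^\pm_{\mathrm{red}}$ for torsion $\frs$, with the reduced summands finite-rank over $\Lambda$ and the towers paired by hand, or the observation that a nondegenerate $[\omega]|_Y$ kills $HF^\infty$ so that $HF^\pm$ are themselves finite-rank. Your closing sentence points in this direction, but the ``five-lemma argument'' is named rather than carried out, so the central claim of non-degeneracy remains unestablished.
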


The pairing also satisfies the following duality property.

\begin{theorem}[Jabuka--Mark \cite{JM08}]\label{thm:duality}
    Let $W$ be a cobordism from $Y_1$ to $Y_2$, $\omega$ a closed $2$-form on $W$ and $\frs$ a $\Spin^c$ structure on $W$. For $a\in HF^+(Y_1,\frs|_{Y_1};{\omega|_{W_1}})$ and $b\in HF^-(-Y_2,\frs|_{Y_2};{\omega|_{W_2}})$, we have
    \[
        \langle F^+_{W,\frs;\omega}(a),b\rangle=\langle a,F^-_{W',\frs;\omega}(b)\rangle,
    \]
    where we consider $W'$ is $W$ viewed as a cobordism from $-Y_2$ to $-Y_1$. 
\end{theorem}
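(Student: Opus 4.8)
The plan is to prove the duality at the chain level for a fixed Heegaard diagram and then propagate it through a handle decomposition of the cobordism, tracking Novikov weights at every stage; this follows the template of the unperturbed argument of Ozsv\'ath--Szab\'o \cite{OS06}, with the perturbation absorbed exactly as in \cite{JM08}. First I would fix an $\frs$-admissible pointed Heegaard diagram $\cH=(\Sigma,\alphas,\betas,w)$ for $Y$ and note that $\bar{\cH}=(-\Sigma,\alphas,\betas,w)$ is admissible for $-Y$ with the same generators $\xs\in T_\alpha\cap T_\beta$. A domain $\phi\in\pi_2(\xs,\ys)$ for $\cH$ corresponds to a domain $\bar\phi\in\pi_2(\ys,\xs)$ for $\bar{\cH}$ with $\cD(\bar\phi)=-\cD(\phi)$, so $A_\omega(\bar\phi)=-A_\omega(\phi)$, and reversing the almost complex structure identifies $\cM(\phi)$ with $\cM(\bar\phi)$. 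Thus a differential term of $CF^\circ(-Y,\frs;\omega)$ running from $\ys$ to $\xs$ carries the Novikov weight $z^{-A_\omega(\phi)}$, conjugate to the weight $z^{A_\omega(\phi)}$ appearing in $CF^\circ(Y,\frs;\omega)$. Declaring $\langle\xs,\ys\rangle$ to equal $1$ (times the appropriate power of $U$) when $\xs=\ys$ and $0$ otherwise and extending $\Lambda$-sesquilinearly, this sign reversal is exactly what the conjugate module structure on $\overline{HF^-(-Y,\frs;\omega)}$ absorbs, so $\langle\partial^+a,b\rangle=\langle a,\partial^-b\rangle$ and the pairing of Theorem~\ref{thm:pairing} descends to homology; I recall this chain-level description because the duality argument rests on it.

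Next I would choose a handle decomposition $W=W_1\cup W_2\cup W_3$ of the cobordism into $1$--, $2$--, and $3$--handles and use the composition law (Lemma~\ref{prop:composition}) to factor $F^\circ_{W,\frs;\omega}$ accordingly; turning $W$ around sends each $i$--handle to a $(4-i)$--handle, so $W'=W_3'\cup W_2'\cup W_1'$. It then suffices to verify the adjointness relation for each handle type. For $1$-- and $3$--handles the associated maps are the standard (de)stabilization maps, one may arrange $\omega$ to vanish on these pieces, and the adjointness is an immediate chain-level computation. For $2$--handles the map is a sum of holomorphic-triangle counts $f_{\alphas\betas\boldsymbol{\gamma}}(-\otimes\Theta)$; orientation reversal carries a triangle class $\psi$ to a triangle class $\bar\psi$ in the reversed diagram with the three boundary families appropriately relabeled and the vertices permuted, satisfying $A_\omega(\bar\psi)=-A_\omega(\psi)$ and with identified moduli spaces, and this yields the adjointness $\langle f(x\otimes\Theta),y\rangle\doteq\langle x,f(\Theta'\otimes y)\rangle$ of the triangle maps. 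Composing the handle-wise statements gives $\langle F^+_{W,\frs;\omega}(a),b\rangle\doteq\langle a,F^-_{W',\frs;\omega}(b)\rangle$, which is the asserted identity since the pairing and the cobordism maps are only claimed up to multiplication by some $z^x$. When $\omega$ does not vanish on $Y_1$ or $Y_2$ one first normalizes $\omega$ near $\partial W$ using Lemma~\ref{lem:changing2form} (this changes neither side), and then the identity holds on fixed $\Spin^c$ structures on $Y_1$ and $Y_2$.

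The substantive point, and the main obstacle, is the $2$--handle step: one must set up the orientation-reversal correspondence for holomorphic \emph{triangles} (rather than disks), verify the area identity $A_\omega(\psi)=-A_\omega(\bar\psi)$ at the level of triangle classes together with the relabeling of the fixed generators $\Theta$, and check that the $\Spin^c$ structure on $W$ restricting to $\frs$ matches the one on $W'$ under the correspondence. Granting this, the remaining verifications are formal, and the perturbed setting differs from the unperturbed one only in the bookkeeping of Novikov exponents, which throughout is governed uniformly by the conjugate module structure on $\overline{HF^-}$.
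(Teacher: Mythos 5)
The paper does not prove this statement --- it is quoted as a theorem of Jabuka--Mark \cite{JM08}, which extends the unperturbed duality of Ozsv\'ath--Szab\'o \cite{OS06} to the perturbed setting. Your sketch faithfully reconstructs that argument (chain-level adjointness via orientation reversal of the Heegaard diagram, handle decomposition with the composition law, triangle-count adjointness for $2$--handles, Novikov weights tracked by the conjugate module structure) and correctly flags the $2$--handle step as the substantive one, so it follows essentially the approach of the cited source.
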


%%%%%%%---------------------------------------------------------------------
\subsection{Ozsv\'{a}th--Szab\'{o} mixed invariants}\label{mixed}
%%%%%%%---------------------------------------------------------------------

Let $W$ be a cobordism from $Y_1$ to $Y_2$ with $b_2^+(W)>1$. In \cite{OS06}, Ozsv\'{a}th and Szab\'{o} defined the \dfn{mixed map}

\[
  F^{mix}_{W,\frs}: HF^-(Y_1,\frs|_{Y_1}) \rightarrow HF^+(Y_2,\frs|_{Y_2}).
\]

To define the mixed map, we choose a codimension one submanifold $N$ which separates $W$ into two cobordisms $W_1$ and $W_2$ and satisfies
\begin{itemize}
    \item $b_2^+(W_i)>0$ for $i=1,2$ and
    \item $\delta: H^1(N) \rightarrow H^2(W,\partial W)$ vanishes
\end{itemize}
Such an $N$ is called an \dfn{admissible cut}. In \cite{OS06}, Ozsv\'{a}th and Szab\'{o} proved that $F^\infty_{W_1,\frs|_{W_1}}=0$ and $F^\infty_{W_2,\frs|_{W_2}}=0$ (this is also true for perturbed cobordism maps). Thus, we may restrict the codomain of $F^-_{W_1,\frs|_{W_1}}$ to 
\[
    HF^-_{red}(N,\frs|_N) := \ker\left(HF^-(N,\frs|_N) \rightarrow HF^\infty(N,\frs|_N)\right),
    \]
    and thus get a map
    \[
    F^-_{W_1,\frs|{W_1}}:HF^-(Y_1,\frs|_{W_1}) \rightarrow HF^-_{red}(N,s|_N).
\]
Moreover, %we may factor $F^+_{W_2,\frs|_{W_2}}$ through the projection of $HF^+(N,\frs|_N)$ to its cokernel. That is 
if 
\[
    HF^+_{red}(N,\frs|_N) := \mathrm{coker}\left(HF^\infty(N,\frs|_N) \rightarrow HF^+(N,\frs|_N)\right),
    \]
    then we may factor $F^+_{W_2,\frs|_{W_2}}$ through $ HF^+_{red}(N,\frs|_N)$ to get
    \[
    F^+_{W_2,\frs|_{W_2}}:HF^+_{red}(N,\frs|_N) \rightarrow HF^+(Y_2,\frs|_{W_2}).
\]

The boundary map $\tau$ in the long exact sequence of Heegaard Floer homology (see Section~\ref{perturbedHF}) induces an isomorphism between  $H^+_{red}(N,\frs|_N)$ and $H^-_{red}(N,\frs|_N)$.

Now the mixed map is defined as follows:
\[
    F^{mix}_{W,\frs} := F^+_{W_2,\frs|_{W_2}} \circ \tau^{-1} \circ F^-_{W_1,\frs|_{W_1}}.
\]
In \cite{OS06}, Ozsv\'{a}th and Szab\'{o} proved that the mixed map is independent of the admissible cut $N$.

Let $X$ be a compact $4$--manifold with connected boundary $Y$ and $b_2^+(X)>1$. We consider $X$ as a cobordism from $S^3$ to $Y$. We define the \dfn{Ozsv\'{a}th--Szab\'{o} invariant of $X$} to be the map
\[
    \Phi_{X} : \Spin^c(X) \rightarrow HF^+(Y) 
\]
that sends $\frs$ to
\[
    \Phi_{X,\frs} := F^{mix}_{X,\frs}(\theta^-) 
\]
where $\theta^-$ is the top-graded generator of $HF^-(S^3)$.

\begin{remark}
    In \cite{OS06} Ozsv\'{a}th and Szab\'{o} defined $\Phi_{X,\frs}$ as a numerical invariant for a closed $4$--manifold $X$ by pairing $F^{mix}_{X,\frs}(\theta^-)$ with $\theta^+$, which is the bottom-graded generator for $HF^+(S^3)$. However, since there is no canonical element for general $HF^+(Y)$, we define $\Phi_{X,\frs}$ as an element in $HF^+(Y)$. 
\end{remark}

When $X$ is a closed $4$--manifold, Jabuka and Mark in \cite{JM08}, and Juh\'{a}sz and Zemke in \cite{JZ18} computed the Ozsv\'{a}th--Szab\'{o} invariant of $X$ using perturbed cobordism maps. The following lemmas and properties are proved by Juh\'{a}sz and Zemke in \cite{JZ18} when $X$ is a closed $4$--manifold. The proofs are identical for the relative case.

\begin{lemma}[Juh\'{a}sz--Zemke \protect{\cite[Lemma~4.1]{JZ18}}]\label{lem:modifying2form}
    Let $X$ be a $4$--manifold with connected boundary $Y$ and $b_2^+(X)>1$, and $N$ an admissible cut of $X$ separating $X$ into $W_1 \cup W_2$. Suppose $b$ is an element in $H^2(X,\partial X; \R)$. Then there is a closed 2-from $\omega$ on $(X,\partial{X})$ such that
    \begin{itemize}
        \item $\omega$ vanishes on $N$, and
        \item $[\omega] = b$. %compactly supported cohomology
    \end{itemize}
\end{lemma}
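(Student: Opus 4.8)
The plan is to reduce the statement to a single cohomological input about the admissible cut and then produce the form by a routine cut-off near $N$. First I would recall that the relative de Rham cohomology $H^2(X,\partial X;\R)$ is computed by the complex of closed forms vanishing near $\partial X$ (equivalently, compactly supported in $\mathrm{int}(X)$) --- exactly what the lemma calls a ``$2$-form on $(X,\partial X)$'' --- so I may pick a closed $\beta\in\Omega^2(X)$ supported in $\mathrm{int}(X)$ with $[\beta]=b$. Its restriction $\iota_N^*\beta$ is a closed $2$-form on the closed $3$--manifold $N$, whose de Rham class is the image of $b$ under $H^2(X,\partial X;\R)\to H^2(X;\R)\to H^2(N;\R)$. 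The only place the hypothesis enters is the assertion that this restriction map is zero: writing $X=W_1\cup_N W_2$, restriction to $N$ is, under Poincar\'e--Lefschetz duality on $X$ and Poincar\'e duality on $N$, dual to the relative Mayer--Vietoris connecting homomorphism $\delta\colon H^1(N)\to H^2(X,\partial X)$ of this splitting --- concretely, $PD[\Sigma]$ restricts to $PD_N[\Sigma\cap N]$, and $[\Sigma\cap N]\in H_1(N)$ is precisely the $N$-part of the boundary of $\Sigma\cap W_1$. Since $N$ is an admissible cut, $\delta$ vanishes integrally, hence over $\R$, hence so does its dual, the restriction map. Consequently $\iota_N^*\beta=d\gamma$ for some $\gamma\in\Omega^1(N)$.

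Next I would cut $\gamma$ off across $N$ and subtract it. As $N$ separates $X$ it is two-sided, so it has a tubular neighborhood $\nu N\cong N\times(-1,1)\subset\mathrm{int}(X)$ with $N=N\times\{0\}$; let $t$ be the coordinate on $(-1,1)$, $\pi\colon\nu N\to N$ the projection, and $\chi\colon(-1,1)\to[0,1]$ a smooth compactly supported function that is identically $1$ near $0$. Put $\eta:=\chi(t)\,\pi^*\gamma$ on $\nu N$ and extend it by $0$; this is a $1$-form compactly supported in $\mathrm{int}(X)$. Then $\omega:=\beta-d\eta$ is closed, compactly supported in $\mathrm{int}(X)$, and $[\omega]=[\beta]=b$ in $H^2(X,\partial X;\R)$ since $d\eta$ is exact with compact interior support. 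Finally, along $N=N\times\{0\}$ the $dt$-term of $d\eta=\chi'(t)\,dt\wedge\pi^*\gamma+\chi(t)\,\pi^*d\gamma$ pulls back to $0$ and $\chi(0)=1$, so $\iota_N^*d\eta=d\gamma=\iota_N^*\beta$ and therefore $\iota_N^*\omega=0$, as required.

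The cut-off part is entirely routine; the step I expect to be the main obstacle is the cohomological input above, namely identifying the admissible-cut condition with the vanishing of the restriction $H^2(X,\partial X;\R)\to H^2(N;\R)$ via the duality between the Mayer--Vietoris connecting map and the restriction map. Since $N$ lies in the interior of $X$, this --- and hence the whole argument --- is insensitive to $\partial X$, which is exactly why the proof can be quoted verbatim from Juh\'{a}sz--Zemke's treatment of closed $4$--manifolds.
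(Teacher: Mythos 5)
Your argument is correct, and it is essentially the proof the paper is implicitly invoking: the paper does not write one out, but instead cites Juh\'asz--Zemke \cite[Lemma~4.1]{JZ18} and asserts that ``the proofs are identical for the relative case,'' and what you have reconstructed is exactly that cutoff-and-subtract argument, together with the observation (which you correctly emphasize at the end) that $N$ lies in $\mathrm{int}(X)$ so the boundary plays no role. One small imprecision worth flagging: the map that is literally adjoint (under the pairings $H^2(X,\partial X;\R)\times H^2(X;\R)\to\R$ and $H^1(N;\R)\times H^2(N;\R)\to\R$) to $\delta\colon H^1(N;\R)\to H^2(X,\partial X;\R)$ is the \emph{absolute} restriction $H^2(X;\R)\to H^2(N;\R)$, not the composite $H^2(X,\partial X;\R)\to H^2(N;\R)$ itself; but since the latter factors through the former, $\delta=0$ still forces the composite to vanish, so the conclusion and the rest of your argument stand.
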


We also have the following. 
\begin{lemma}[Juh\'{a}sz--Zemke \protect{\cite[Lemma~4.2]{JZ18}}]\label{lem:finitespinc}
    Let $X$ be a $4$--manifold with connected boundary $Y$ and $b_2^+(X)>1$, and $N$ an admissible cut of $X$ separating $X$ into $W_1 \cup W_2$. If $\omega$ is a closed $2$-form on $X$ that is exact on $Y$ and $N$, then $F^-_{W_1,\frt;\omega|_{W_1}}$ and $F^+_{W_2,\fru;\omega|_{W_2}}$ are non-vanishing for only finitely many $\frt\in\Spin^c(W_1)$ and $\fru\in\Spin^c(W_2)$.  
\end{lemma}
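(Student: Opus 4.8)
The plan is to reduce everything to the closed--manifold case treated by Juh\'{a}sz and Zemke in \cite[Lemma~4.2]{JZ18} and to isolate the single place where the hypothesis that the outgoing end is $S^3$ enters there. Cutting $\overline{X-B^4}$, viewed as a cobordism from $S^3$ to $Y$, along the admissible cut $N$ produces $W_1$, a cobordism from $S^3$ to $N$, and $W_2$, a cobordism from $N$ to $Y$. The piece $W_1$ is the same kind of cobordism as in the closed case, with the same cut $N$, so the finiteness of $\{\frt\in\Spin^c(W_1):F^-_{W_1,\frt;\omega|_{W_1}}\neq 0\}$ is \emph{verbatim} \cite[Lemma~4.2]{JZ18}. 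For $W_2$ one runs the identical argument with the outgoing $S^3$ replaced by $Y$; the only ingredient of \cite{JZ18} that is special to $S^3$ is that $HF^+$ of the outgoing end is supported in finitely many $\Spin^c$ structures, which holds equally for $Y$.

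First I would record the effect of the exactness hypothesis. Since $\omega|_N$ and $\omega|_Y$ are exact, the closed $2$-chain $\widetilde{\cD}(\phi)$ associated to any $\phi$ is a cycle in $N$ (resp.\ $Y$), so $A_{\omega|_N}(\phi)=0$ and $A_{\omega|_Y}(\phi)=0$ for every $\phi$; hence for every $\Spin^c$ structure one has $HF^\circ(N,\frs';\omega|_N)\cong HF^\circ(N,\frs')\otimes_{\F_2}\Lambda$ and $HF^\circ(Y,\frs'';\omega|_Y)\cong HF^\circ(Y,\frs'')\otimes_{\F_2}\Lambda$. In particular $HF^-_{red}(N;\omega|_N)$ and $HF^+_{red}(N;\omega|_N)$ are finitely generated over $\Lambda$ and nonzero for only finitely many $\frs'\in\Spin^c(N)$, and $HF^+(Y;\omega|_Y)$ is nonzero for only finitely many $\frs''\in\Spin^c(Y)$ --- these being the standard finiteness statements for Heegaard Floer homology of a closed $3$--manifold. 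Next, since $b_2^+(W_1),b_2^+(W_2)>0$ we have $F^\infty_{W_1,\frt;\omega|_{W_1}}=0$ and $F^\infty_{W_2,\fru;\omega|_{W_2}}=0$, so $F^-_{W_1,\frt;\omega|_{W_1}}$ has image in $HF^-_{red}(N,\frt|_N;\omega|_N)$, while $F^+_{W_2,\fru;\omega|_{W_2}}$ factors through $HF^+_{red}(N,\fru|_N;\omega|_N)$ and has image in $HF^+(Y,\fru|_Y;\omega|_Y)$. Hence these maps vanish unless $\frt|_N$ (resp.\ $\fru|_N$ and $\fru|_Y$) lie in the finite sets of the previous sentence, and it remains only to bound, for each fixed choice of the restrictions to the boundary, the number of extensions over $W_1$ (resp.\ $W_2$) with nonvanishing cobordism map.

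The hard part is precisely this last bound, which is the heart of \cite[Lemma~4.2]{JZ18} and ultimately of Ozsv\'{a}th--Szab\'{o}'s proof that the closed $4$--manifold invariant is supported on finitely many $\Spin^c$ structures \cite{OS06}. The reduction I would use is: since $\omega$ is exact on $\partial W_i$, its class lies in the image of $H^2(W_i,\partial W_i;\R)\to H^2(W_i;\R)$, so by Lemmas~\ref{lem:changing2form} and~\ref{lem:triviallytwisted} applied with a single $\Spin^c$ structure, the map $F^\circ_{W_i,\frt;\omega|_{W_i}}$ agrees, after restriction to fixed $\Spin^c$ structures on $\partial W_i$, with $z^{c(\frt)}\cdot F^\circ_{W_i,\frt}$ for some scalar $c(\frt)$; in particular it is nonzero exactly when the untwisted cobordism map $F^\circ_{W_i,\frt}$ is nonzero. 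One then invokes the finiteness of the set of $\Spin^c$ structures with fixed boundary restriction and nonvanishing untwisted cobordism map, which is where the dimension formula and an energy bound for the relevant holomorphic triangles enter, exactly as in the closed case; because $W_1$ and $W_2$ are ordinary cobordisms with $b_2^+>0$, that argument applies without change. Combining this with the previous paragraph gives that $F^-_{W_1,\frt;\omega|_{W_1}}$ and $F^+_{W_2,\fru;\omega|_{W_2}}$ are nonvanishing for only finitely many $\frt\in\Spin^c(W_1)$ and $\fru\in\Spin^c(W_2)$. The only point genuinely beyond \cite{JZ18} is the finite support of $HF^+(Y;\omega|_Y)$, supplied by the exactness of $\omega$ on $Y$ together with the finiteness of $HF^+$ of a closed $3$--manifold.
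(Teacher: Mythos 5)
Your argument is correct and matches the paper's intent exactly: the paper's entire justification is the one-line observation that $HF^+(Y)$ is supported on finitely many $\Spin^c$ structures, so the closed-case argument of Juh\'asz--Zemke applies unchanged to $W_2$ once the outgoing $S^3$ is replaced by $Y$. You have simply spelled out the details that the paper leaves implicit (the vanishing of $A_\omega$ for exact $\omega$, the factoring through $HF^\pm_{\mathrm{red}}(N)$, and the reduction to the untwisted finiteness for fixed boundary restrictions).
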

The argument is almost exactly the same as in \cite[Lemma~4.2]{JZ18}. More precisely, in \cite[Lemma~4.2]{JZ18}, $X$ was a closed manifold so $W_1$ was a cobordism from $N$ to $S^3$ and $W_2$ was a cobordism form $S^3$ to $N$. In our situation $W_2$ is the same sort of cobordism so the argument for $F^+_{W_2,\fru;\omega|_{W_2}}$ is the same as in \cite[Lemma~4.2]{JZ18}, but our $W_1$ is a cobordism from $N$ to $Y$. So we need to consider applying the map $F^-_{W_1,\frt;\omega|_{W_1}}$ to a finite set of generators of $HF^+(Y)$ instead of the unique generator of $HF^+(S^3)$, otherwise the argument is identical. 
%Since $HF^+(Y)$ is non-vanishing for only finitely many $\Spin^c$ structures on $Y$, we can easily extend \cite[Lemma~4.2]{JZ18} to the relative case.

Recall in the introduction we defined 
\[
\Phi_{X; \b} = \sum_{s\in\Spin^c(X)}  \Phi_{X,\frs} \cdot z_1^{\langle i_*(\frs-\frs_0)\cup b_1 ,[W,\partial W]\rangle} \cdots z_n^{\langle i_*(\frs-\frs_0)\cup b_n ,[W,\partial W]\rangle},
\]
which encodes much of the information in the Ozsv\'{a}th--Szab\'{o} invariant $\Phi_{X}$.
With this notation we can see how to compute the Ozsv\'{a}th--Szab\'{o} invariant using perturbed Heegaard Floer homology. 

\begin{proposition}[Juhasz--Zemke \protect{\cite[Proposition~4.3]{JZ18}}]\label{compute}
    Let $X$ be a $4$--manifold with connected boundary $Y$ and $b_2^+(X)>1$, and $N$ an admissible cut of $X$ separating $X$ into $W_1 \cup W_2$. Suppose $\b=\{b_1,\ldots,b_n\}$ is a basis of $H^2(X,\partial X;\R)$ and $\omegas=\{\omega_1,\ldots,\omega_n\}$ is a set of $2$-forms on $(X,\partial{X})$ such that $[\omega_i] = b_i$ and each $\omega_i$ vanishes on $N$. Then 
    \[
        \Phi_{X;\b} \doteq F^+_{W_2;\omegas|_{W_2}} \circ \tau^{-1} \circ F^-_{W_1;\omegas|_{W_1}}(\theta^-).
    \]
\end{proposition}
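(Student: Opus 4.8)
The plan is to reduce Proposition~\ref{compute} to the definitions of the mixed map and the Ozsv\'ath--Szab\'o polynomial, using Lemma~\ref{lem:triviallytwisted} to pass between perturbed cobordism maps and the weighted sum of ordinary cobordism maps over $\Spin^c$ structures. First I would recall that by Lemma~\ref{lem:modifying2form} we may indeed choose the $2$-forms $\omega_i$ with $[\omega_i]=b_i$ vanishing on $N$; this is exactly the hypothesis, so the perturbed maps $F^-_{W_1;\omegas|_{W_1}}$ and $F^+_{W_2;\omegas|_{W_2}}$ make sense, and by Lemma~\ref{lem:finitespinc} the relevant $\Spin^c$-restricted maps are non-vanishing for only finitely many $\Spin^c$ structures, so the infinite sums occurring below are all finite and the maps $F^\circ_{W_i;\omegas|_{W_i}}$ are defined even for $\circ\in\{-,\infty\}$.

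The core computation is then: apply Lemma~\ref{lem:triviallytwisted} to $W_1$ (with $\frS=\Spin^c(W_1)$, target $3$--manifold $N$) and to $W_2$ (with $\frS=\Spin^c(W_2)$, target $Y$) to get, up to $\doteq$,
\[
F^-_{W_1;\omegas|_{W_1}}(\theta^-)\ \doteq\ \sum_{\frt\in\Spin^c(W_1)} \z^{\langle i_*(\frt-\frt_0)\cup[\omegas|_{W_1}],[W_1,\partial W_1]\rangle}\cdot F^-_{W_1,\frt}(\theta^-),
\]
and similarly for $W_2$. Since each $\omega_i$ vanishes on $N$, the map $\tau^{-1}$ is $\Lambda_n[U]$-linear and commutes past the monomial weights, so composing the two expressions and grouping the terms coming from a single $\frs\in\Spin^c(X)$ with $\frs|_{W_i}=\frt,\fru$ gives a sum over $\Spin^c(X)$ whose $\frs$-term is $\z^{E(\frs)}\cdot F^+_{W_2,\frs|_{W_2}}\circ\tau^{-1}\circ F^-_{W_1,\frs|_{W_1}}(\theta^-) = \z^{E(\frs)}\cdot F^{mix}_{X,\frs}(\theta^-) = \z^{E(\frs)}\cdot\Phi_{X,\frs}$, where $E(\frs)$ is an exponent vector. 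Here one uses the $\Spin^c$ composition law (Lemma~\ref{prop:composition}) together with the independence of the mixed map on the admissible cut to identify $F^+_{W_2,\frs|_{W_2}}\circ\tau^{-1}\circ F^-_{W_1,\frs|_{W_1}}$ with $F^{mix}_{X,\frs}$.

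The one genuinely fiddly point — and the step I expect to be the main obstacle — is verifying that the exponent $E(\frs)$ coming from the Mayer--Vietoris combination of $\langle i_*(\frt-\frt_0)\cup[\omegas|_{W_1}],[W_1,\partial W_1]\rangle$ and the $W_2$ analogue equals $\langle i_*(\frs-\frs_0)\cup[\omegas],[X,\partial X]\rangle$, i.e.\ that the pairings add correctly across the cut $N$. This is where the hypothesis $\delta\colon H^1(N)\to H^2(X,\partial X)$ vanishing in the definition of an admissible cut, and the fact that $\omega_i$ vanishes on $N$, are used: they ensure $[\omegas|_{W_i}]$ glue to a well-defined class, that the fundamental classes $[W_i,\partial W_i]$ add to $[X,\partial X]$ under the Mayer--Vietoris boundary, and that the base $\Spin^c$ structures can be chosen compatibly ($\frt_0=\frs_0|_{W_1}$, $\fru_0=\frs_0|_{W_2}$). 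Once the additivity of these pairings under the decomposition is checked — essentially a naturality diagram chase in relative cohomology — the identity $\Phi_{X;\b}\doteq F^+_{W_2;\omegas|_{W_2}}\circ\tau^{-1}\circ F^-_{W_1;\omegas|_{W_1}}(\theta^-)$ follows by matching the $\frs$-indexed terms on both sides. Since all of this is carried out exactly as in \cite[Proposition~4.3]{JZ18} for closed $X$, with the only change being that the target is $HF^+(Y)$ rather than $HF^+(S^3)\cong\F_2$, I would simply indicate these modifications rather than reproduce the argument in full.
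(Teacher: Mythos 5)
Your proposal is correct and follows essentially the same route as the paper, which simply states that the Juh\'asz--Zemke proof of the closed case \cite[Proposition~4.3]{JZ18} goes through verbatim once $HF^+(S^3)$ is replaced by $HF^+(Y)$ and Lemma~\ref{lem:finitespinc} is invoked to keep the sums finite. Your elaboration of that argument --- expanding both perturbed maps via Lemma~\ref{lem:triviallytwisted}, composing through $\tau^{-1}$ using the $\Spin^c$ composition law and the vanishing of $\omegas$ on $N$, and matching exponents via Mayer--Vietoris with the admissible-cut condition guaranteeing compatibility of the base $\Spin^c$ structures --- is exactly what is being deferred to in the paper.
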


%%%%%%%%%%%%%%%%%%%%%%%%%%%%%%%%%%%%
\section{Knot and concordance surgery}\label{KCS}
%%%%%%%%%%%%%%%%%%%%%%%%%%%%%%%%%%%%

%Knot surgery is an operation modifying $4$--manifolds, introduced by Fintushel and Stern in \cite{FS98}. Let $X$ be a $4$--manifold and $T$ a torus embedded in $X$ with trivial normal bundle. Thus $T$ has a neighborhood $N_T=T\times D^2$. Let $K$ be a knot in $S^3$ and $N_K=S^1\times D^2$ a neighborhood of $K$ in $S^3$. Notice that $(S^3\setminus N_K)\times S^1$ is a homology $D^2\times T^2$ and has boundary $T^3$. Thus we may glue $X\setminus N_T$ and $(S^3\setminus N_K)\times S^1$ together so that $\partial D^2$ in $N_T$ is glued to the longitude for $K$ in $(S^3\setminus N_K)\times S^1$. The resulting manifold is denoted $X_K$ and is called the result of \dfn{$K$-knot surgery on $X$}. One may easily check that the homology and intersection pairing of $X_K$ and $X$ are the same. We note that there is some ambiguity in the diffeomorphism type of $X_K$ as our gluing data does not uniquely specify a diffeomorphism, but this ambiguity will not affect the result below. 
%Fintushel and Stern also showed that how the Seiberg--Witten invariants change under knot surgery.

%In \cite{Akbulut02}, Akbulut introduced concordance surgery using self-concordance in $S^3$. 

Concordance surgery is a generalization of Fintushel-Stern's knot surgery \cite{FS98}; see \cite{Akbulut02, Tange05}. 
Here we discuss the generalized version of  concordance surgery using self-concordance  in a homology sphere, introduced by Juh\'{a}sz and Zemke in \cite{JZ18}.

Let $X$ be a $4$--manifold with the same conditions containing a torus $T$ with trivial normal bundle and let $N_T=T\times D^2$ be a neighborhood of $T$ in $X$. Suppose $M$ is an integer homology $3$--sphere and $K$ is a knot in $M$. Consider a self-concordance  $\cC = (I \times M, A)$ from $(M,K)$ to itself. We may glue the ends of $I\times M$ and obtain $S^1\times M \cong I\times M/\sim$. We also glue the ends of $A$ and obtain an embedded torus $T_\cC\subset S^1 \times M$. Remove a neighborhood of $T_\cC$ from $S^1\times M$ to obtain a $4$--manifold $W_\cC$ with boundary $T^3$. Now we may glue $X\setminus N_T$ and $W_\cC$ so that $\partial D^2$ in $N_T$ is glued to the longitude for $K$ in $M\setminus N_K$. We write the resulting manifold as $X_\cC$ and call it a result of \dfn{concordance surgery on $X$}. Note that if we use a product concordance in $S^3$, then concordance surgery and knot surgery are same. 

Since $W_\cC$ is a homology $T^2\times D^2$ and the fact that we are gluing the null-homologous curve in $W_\cC$ to the meridian of $T$ we see that the homology and intersection pairing of $X$ and $X_\cC$ are the same. Moreover, if $\cC$ is a self-concordance of $(S^3,K)$ and $X\setminus N_T$ is simply connected, then so is $X_\cC$ and hence by Freedman \cite{Freedman82} $X$ and $X_\cC$ will be homeomorphic (this is because the fundamental group of the complement of $T_\cC$ in $S^1\times S^3$ is generated by meridians and $S^1\times \{pt\}$ and so will be killed when glued to $X\setminus N_T$). 

In \cite{JZ18}, Juh\'{a}sz and Zemke showed how the Ozsv\'{a}th--Szab\'{o} invariants change under concordance surgery. To state this, we recall the graded Lefschetz number of the concordance map on knot Floer homology.
Let $\cC = (I\times M, A)$ be a self-concordance of a knot $K$ in a homology $3$--sphere $M$ and let $a$ be a pair of properly embedded parallel arcs on $A$ connecting the boundary components of $A$. In \cite{Juhasz16}, Juh\'{a}sz showed that there is a map induced by $(\cC, a)$ on knot Floer homology:
\[
    \widehat{F}_{\cC,a}: \widehat{HFK}(M,K) \rightarrow \widehat{HFK}(M,K).
\]

In \cite{JM18}, Juh\'{a}sz and Marengon showed that the map $\widehat{F}_{\cC,a}$ preserves the Alexander and Maslov gradings when $M=S^3$ and in \cite{Zemke19}, Zemke extended this result to general $3$--manifolds. Thus, we can define the graded Lefschetz number of $\widehat{F}_{\cC,a}$:

\[
    \Lef_z(\cC) := \sum_{i\in\Z}\Lef\left(\widehat{F}_{\cC,a}|_{\widehat{HFK}(M,K,i)}:\widehat{HFK}(M,K,i) \rightarrow \widehat{HFK}(M,K,i) \right)\cdot z^i.
\]

In \cite{JZ18}, Juh\'{a}sz and Zemke proved that the graded Lefschetz number of $\widehat{F}_{\cC,a}$ is independent of the parallel arcs $a$.

\begin{lemma}[Juh\'{a}sz--Zemke \cite{JZ18}]\label{lem:symmetry}
The graded Lefschetz number    $\Lef_z(\cC)$ does not depend on the choice of arcs $a$. Moreover, $\Lef_z(\cC)$ is symmetric under the conjugation $z\mapsto z^{-1}$.
\end{lemma}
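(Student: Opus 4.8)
The plan is to prove the two assertions of Lemma~\ref{lem:symmetry} separately, as they have rather different flavors.

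For the independence on the choice of parallel arcs $a$, I would first recall that any two choices $a_0$ and $a_1$ of parallel arc pairs connecting the two boundary components of $A$ differ, up to isotopy rel the boundary knots, by a sequence of finger moves together with a global ambient isotopy; more precisely, the difference between the two induced maps on $\widehat{HFK}(M,K)$ is governed by the action of $\pi_1$ of the complement of the concordance surface, acting by basepoint-moving maps. The key point, due to Juh\'asz--Marengon (cited just above) and Zemke's general theory of graded maps, is that each such basepoint-moving map is filtered of degree zero and, on the associated graded, acts as multiplication by a unit, hence has Lefschetz number equal (up to sign, which is irrelevant over $\F_2$) to that of the identity in each Alexander grading. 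So I would argue that $\widehat{F}_{\cC,a_0}$ and $\widehat{F}_{\cC,a_1}$ agree on the associated graded of the Alexander filtration up to an overall unit scalar in each grading, and since the graded Lefschetz number only sees the trace of the associated graded piece in each Alexander degree, $\Lef_z(\cC)$ is unchanged. The main obstacle here is bookkeeping: one needs to be careful that the ``unit scalar'' ambiguity genuinely does not affect the trace, which over $\F_2$ with $U$-torsion-free associated graded pieces is automatic, but I would want to invoke the precise statement from \cite{Juhasz16, JM18, JZ18} rather than reprove it.

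For the conjugation symmetry $\Lef_z(\cC) = \Lef_{z^{-1}}(\cC)$, I would use the symmetry of knot Floer homology under orientation reversal of the knot (equivalently, the conjugation symmetry $\widehat{HFK}(M,K,i)\cong\widehat{HFK}(M,K,-i)$, which at the chain level comes from swapping the two basepoints of a doubly-pointed Heegaard diagram). Reversing the orientation of $K$ carries the self-concordance $\cC$ to a self-concordance $\overline{\cC}$ of $(M,\overline{K})$, and the naturality of the concordance map under this symmetry (again a statement in Zemke's functoriality package) gives a commutative square identifying $\widehat{F}_{\cC,a}$ in Alexander grading $i$ with $\widehat{F}_{\overline\cC,\overline a}$ in Alexander grading $-i$ up to conjugation by the symmetry isomorphism. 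Since conjugation does not change the Lefschetz number (trace of $g f g^{-1}$ equals trace of $f$), this yields $\Lef_z(\cC) = \Lef_{z^{-1}}(\overline\cC)$; and because $\overline\cC$ is isotopic to $\cC$ as an unoriented concordance (orientation of the knot is not part of the data of $\cC$ up to the relevant equivalence, or one simply observes that the two produce the same torus $T_\cC$ and the same surgered manifold), one concludes $\Lef_z(\cC)=\Lef_{z^{-1}}(\cC)$. I expect the conjugation-symmetry half to be the more delicate of the two, the subtle point being to track exactly which orientation conventions enter the definition of $\widehat{F}_{\cC,a}$ and to make sure the symmetry isomorphism intertwines the two concordance maps on the nose (up to the inconsequential unit scalar); here too I would lean on the statement as recorded in \cite{JZ18} rather than redo Zemke's naturality arguments from scratch.
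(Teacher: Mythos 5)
The paper does not actually give a proof of this lemma: it is stated as a result imported from Juh\'asz--Zemke \cite{JZ18} and no argument is supplied in this paper, so there is no ``paper's own proof'' to compare your outline against. With that caveat, a few remarks on the substance of your proposal.

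Your treatment of the conjugation symmetry $z\mapsto z^{-1}$ is on the right track: the graded Lefschetz number only records traces in each Alexander grading, and the conjugation symmetry $\widehat{HFK}(M,K,i)\cong\widehat{HFK}(M,K,-i)$ together with functoriality of the concordance map should intertwine $\widehat{F}_{\cC,a}$ in gradings $i$ and $-i$ up to an isomorphism, whence equality of traces. You are right to flag that the delicate point is pinning down the exact commutative square (including the Maslov-grading shift and the orientation conventions on the decorated concordance), which is genuinely nontrivial in Zemke's functoriality package; deferring to \cite{JZ18} there is reasonable. The arc-independence half is shakier as written. The assertion that basepoint-moving maps ``act as multiplication by a unit'' on the associated graded is not correct in general --- the Sarkar basepoint-moving map is a nontrivial automorphism of knot Floer homology for many knots, and over $\F_2$ ``unit scalar'' would force your two maps to be literally equal, which there is no reason to expect. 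The robust statement, and the one that actually suffices, is weaker: changing the arcs \emph{conjugates} $\widehat{F}_{\cC,a}$ by a filtered degree-zero automorphism in each Alexander grading, and since $\mathrm{tr}(gfg^{-1})=\mathrm{tr}(f)$ the graded Lefschetz number is unchanged. Your last sentence about the Lefschetz number only seeing the trace is the correct idea, so the salvage is already implicit in what you wrote, but the ``agree up to an overall unit scalar'' phrasing should be replaced by ``are conjugate'' to make the argument both true and complete.
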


They also proved a concordance surgery formula for closed $4$--manifolds.
\begin{theorem}[Juh\'{a}sz--Zemke \cite{JZ18}]\label{consurg}
    Let $X$ be a smooth, oriented and closed $4$--manifold with $b_2^+(X) > 1$. Suppose $T$ is an embedded torus in $X$ with trivial normal bundle such that $[T]\neq0\in H_2(X;\R)$ and $\b=(b_1,\ldots,b_n)$ is a basis of $H^2(X;\R)$ such that $\langle[T],b_1\rangle=1$ and $\langle[T],b_i\rangle=0$ for $i>1$. Then
    \[
        \Phi_{X_\cC;\b} = \Lef_z(\cC)\cdot\Phi_{X;\b}
    \]        
\end{theorem}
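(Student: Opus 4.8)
The plan is to follow the strategy of Juh\'asz and Zemke, organized as a reduction by an admissible cut to a single ``cap'' computation for $T^3$ that is in the end a Lefschetz fixed-point count; this is also the scheme we adapt for Theorem~\ref{thm:main1}. First, using $b_2^+(X)>1$, I would pick an admissible cut $N$ of $X$ that is disjoint from $T$, with the torus neighborhood $N_T=T\times D^2$ on one side, so $X=W_1\cup_N W_2$ (viewed as a cobordism from $S^3$ to $S^3$) with $N_T\subset W_2$. Since $X$ and $X_\cC$ have the same homology and intersection form, the same $N$ is an admissible cut of $X_\cC=W_1\cup_N W_2^\cC$ with $W_2^\cC=(W_2\setminus\mathrm{int}\,N_T)\cup W_\cC$. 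Choosing closed $2$-forms $\omegas=(\omega_1,\dots,\omega_n)$ with $[\omega_i]=b_i$ and $\omega_i|_N=0$ (Lemma~\ref{lem:modifying2form}; the closed case is \cite[Lemma~4.1]{JZ18}), note $\int_T\omega_1=\langle[T],b_1\rangle=1$ and $\int_T\omega_i=0$ for $i>1$. By the computation of the mixed invariant through perturbed cobordism maps (the closed-manifold version of Proposition~\ref{compute}, \cite[Proposition~4.3]{JZ18}) together with Lemma~\ref{lem:finitespinc},
\[
\Phi_{X;\b}\doteq F^+_{W_2;\omegas}\circ\tau^{-1}\circ F^-_{W_1;\omegas}(\theta^-),\qquad
\Phi_{X_\cC;\b}\doteq F^+_{W_2^\cC;\omegas}\circ\tau^{-1}\circ F^-_{W_1;\omegas}(\theta^-).
\]
As $W_1$ and the input are unchanged, it would remain to show $F^+_{W_2^\cC;\omegas}\doteq\Lef_z(\cC)\cdot F^+_{W_2;\omegas}$, with $z$ identified with $z_1$.

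Next I would factor out the torus region. Writing $V=W_2\setminus\mathrm{int}\,N_T$, a cobordism with an extra $T^3$ boundary component, we have $W_2=V\cup_{T^3}N_T$ and $W_2^\cC=V\cup_{T^3}W_\cC$, with $N_T$ and $W_\cC$ regarded as (homology) caps for $T^3$. By the $\Spin^c$ composition law (Lemma~\ref{prop:composition}), together with the vanishing of $F^\infty$ on the pieces of an admissible cut and the finiteness statement of Lemma~\ref{lem:finitespinc}, the difference between $F^\pm_{W_2^\cC;\omegas}$ and $F^\pm_{W_2;\omegas}$ is carried entirely by the inserted cap, and the problem reduces to proving
\[
F_{W_\cC;\omegas}(1)\doteq\Lef_z(\cC)\cdot F_{N_T;\omegas}(1)
\]
in the appropriate reduced perturbed Floer group of $T^3$, compatibly with the $U$-action. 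Only $z_1$ occurs here because the core torus class is preserved by the surgery, $[T]=[T_\cC]$, so each $\omega_i$ restricts to $N_T$ and to $W_\cC$ with the same cohomology class, nontrivially only for $i=1$.

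The heart of the argument is the cap map of $W_\cC=(S^1\times M)\setminus N_{T_\cC}$, where $T_\cC$ is obtained by gluing the ends of the concordance annulus $A\subset I\times M$ and the surgery identifies $\partial D^2$ with the longitude of $K$. Cutting $S^1=\R/\Z$ at a point presents $I\times M\setminus N_A$ as a cobordism $\Gamma_\cC$ from the knot complement $M\setminus N_K$ to itself, with $W_\cC$ the result of re-gluing its two ends (and $\Gamma_\cC$ the product cobordism, $W_\cC=N_T=T^2\times D^2$, when $\cC$ is a product). I would then (i) describe the perturbed cobordism maps of such $S^1$-families of knot complements through the Ozsv\'ath--Szab\'o knot surgery (mapping cone) formula, extended to homology spheres, under which the $z_1$-grading---measuring $\int\omega_1$ over surfaces, hence intersection number with $[T]=[T_\cC]$, via the longitudinal gluing---matches the Alexander grading of $\widehat{HFK}(M,K)$; (ii) identify the endomorphism of $\widehat{HFK}(M,K)$ induced by $\Gamma_\cC$, in Juh\'asz's sutured and Zemke's graph cobordism TQFT, with the concordance map $\widehat{F}_{\cC,a}$ up to $\doteq$; and (iii) apply the Lefschetz fixed-point principle, whereby closing up a cobordism that induces a graded endomorphism $f$ of a finite-dimensional graded vector space contributes the graded trace $\sum_i\Lef(f|_i)\,z_1^i$ as its cap invariant. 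Together these give $F_{W_\cC;\omegas}(1)\doteq\Lef_z(\cC)\cdot F_{N_T;\omegas}(1)$---with the product concordance ($\widehat F=\mathrm{id}$) recovering the Euler characteristic polynomial $\Delta_K(z)$ and hence the Heegaard Floer avatar of the Fintushel--Stern knot surgery formula \cite{FS98}---and substituting back yields $\Phi_{X_\cC;\b}\doteq\Lef_z(\cC)\,\Phi_{X;\b}$. Finally I would remove the residual overall $\z^{\a}$- and $z^x$-ambiguity (from the choice of $\frs_0$ and the projectivity of perturbed maps) using the conjugation symmetry of $\Lef_z$ (Lemma~\ref{lem:symmetry}) and of $\Phi_{X;\b}$ under $\frs\mapsto\overline{\frs}$, obtaining the stated equality.

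The main obstacle is making (i)--(iii) precise for \emph{perturbed} cobordism maps: setting up the knot-surgery/product description of $W_\cC$ over the Novikov ring, matching the concordance-complement cobordism map with Juh\'asz's $\widehat{F}_{\cC,a}$ through the graph cobordism formalism so that the arcs $a$ play no role (as in Lemma~\ref{lem:symmetry}), and carefully tracking the $U$-module structure, the vanishing of $F^\infty$, and the interchange between the $+$ and $-$ flavors needed to assemble the mixed map. By contrast, the reductions in the first two steps are essentially formal consequences of the properties of perturbed cobordism maps collected in Section~\ref{mixed}.
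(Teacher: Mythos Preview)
The paper does not give its own proof of this statement: Theorem~\ref{consurg} is quoted from \cite{JZ18} and used as a black box. Your outline is essentially the Juh\'asz--Zemke argument, and in particular your reduction via an admissible cut and the composition law to the local cap computation $F_{W_\cC;\omegas}\doteq\Lef_z(\cC)\cdot F_{N_T;\omegas}$ is exactly the content of Proposition~\ref{prop:plus-map} (their Corollary~5.5), which you then propose to reprove via the knot-surgery/mapping-cone and graph-cobordism machinery. That is the correct scheme.

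One organizational point worth noting, since you say this is ``also the scheme we adapt for Theorem~\ref{thm:main1}'': you place the torus neighborhood on the $W_2$ side of the cut and use the $+$ flavor cap map. For the closed case this is fine and is what \cite{JZ18} does. In the paper's proof of the relative Theorem~\ref{thm:main1}, however, the boundary $Y$ must sit on the $+$ side, so the torus is placed in $W_1$ and one needs the $-$ flavor of the local formula. The paper obtains this (Proposition~\ref{prop:minus-map} and Corollary~\ref{cor:localsurgery}) by pairing-duality from the $+$ statement rather than by rerunning the Lefschetz/graph-cobordism computation; this is a shorter route than redoing your steps (i)--(iii) in the minus setting.
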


A simple, but insightful, observation of Fintushel and Stern about knot surgery, which is naturally extended to concordance surgery, is the following.
\begin{lemma}[Fintushel and Stern 1998, \cite{FS98}]\label{lfs}
Let $(X,\omega)$ be a symplectic $4$--manifold and $T$ a symplectically embedded torus with trivial normal bundle. If $K$ is a fibered knot in a homology sphere $M$ and $\cC$ the trivial self-concordance of $(M,K)$, then $X_\cC$ may be constructed so that it has a symplectic structure $\omega_\cC$. Moreover, in the complement of the surgery region of $X_\cC$ and the neighborhood of the torus in $X$ the symplectic structures $\omega_\cC$ and $\omega$ agree.  
\end{lemma}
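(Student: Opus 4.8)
\textbf{Proof proposal for Lemma~\ref{lfs}.} The plan is to realize the knot-surgery construction symplectically by the standard Fintushel--Stern-type argument, exploiting the fact that a fibered knot gives the complement of the surgery torus the structure of a surface bundle over $S^1$ with symplectic fibers. First I would recall that if $K\subset M$ is a fibered knot with fiber surface $F$ and monodromy $h$, then the knot exterior $M\setminus N_K$ is the mapping torus of $(F,h)$, so $S^1\times(M\setminus N_K)$ fibers over $T^2$ with fiber $F$. Thomas's construction (or the Thurston trick, as in Gompf--Stipsicz) then equips $S^1\times(M\setminus N_K)$ with a symplectic form for which the fibers $F$ are symplectic, and near the boundary $T^3 = S^1\times \partial(M\setminus N_K)$ this form can be taken in a standard product-like model. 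Since $\cC$ is the trivial self-concordance, $W_\cC = S^1\times(M\setminus N_K)$ and $T_\cC = S^1\times(S^1\times\{pt\})$, so $W_\cC$ carries such a symplectic form with convex (or at least standard) behavior near its $T^3$ boundary.

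Next I would arrange that the symplectic structure on $X$ near the torus $T$ also has a standard model. Because $T$ is symplectically embedded with trivial normal bundle, the symplectic neighborhood theorem gives $N_T\cong T^2\times D^2$ with $\omega|_{N_T}$ the sum of area forms, and near $\partial N_T = T^3$ this agrees with the standard $T^3$-model up to scaling. The key point is that the gluing diffeomorphism $\phi$ prescribed in the construction matches the $S^1$ and meridional directions correctly: it sends $\partial D^2$ (the meridian of $T$) to the longitude of $K$, so it identifies the $T^3$ boundaries in a way compatible with the two standard models. After rescaling one of the forms by a constant so that the areas match along $T^3$, one can interpolate the two symplectic forms in a collar neighborhood of the gluing hypersurface using a convexity/Moser argument (or simply note that both forms are standard there, up to scale, so a linear interpolation stays symplectic). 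This produces a closed $2$-form $\omega_\cC$ on $X_\cC = (X\setminus N_T)\cup_\phi W_\cC$ that is symplectic, agrees with $\omega$ outside $N_T$, and agrees with the fibered form outside the surgery region of $W_\cC$; this gives the ``moreover'' clause.

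The main obstacle I expect is purely bookkeeping about the gluing map and orientations: one must check that the natural symplectic models on the two copies of $T^3$ really do agree (after constant rescaling) under the specific $\phi$ used in concordance surgery, in particular that the meridian-to-longitude identification is compatible with the fibered structure of the knot exterior and with the product structure of $N_T$, and that orientations match so that the interpolated form does not degenerate. A secondary technical point is ensuring the symplectic form on $S^1\times(M\setminus N_K)$ coming from Thurston's trick can indeed be normalized to be a genuine product near the boundary (rather than merely symplectic there), so that the collar interpolation is straightforward; this is standard but should be stated carefully. Once these model-matching issues are handled, the conclusion follows, and I would simply cite \cite{FS98} (and \cite{GS99} for the surface-bundle symplectic form) for the details rather than reproducing the Moser-type computation.
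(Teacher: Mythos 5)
The paper itself gives no proof of this lemma: it is stated as an observation of Fintushel and Stern and the citation \cite{FS98} is supplied in lieu of an argument, since the result is used as a black box in the proofs of Theorems~\ref{thm:main3} and~\ref{thm:main4}. So there is no ``paper's own proof'' to match against; you are reconstructing the cited one.

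Your reconstruction is essentially the correct Fintushel--Stern argument, and you correctly identify the one place that requires real care, namely matching the two standard $T^3$ boundary models under the gluing map $\phi$. (Aside: you wrote ``Thomas's construction'' where you mean Thurston's.) Two small comments. First, the crux of the bookkeeping you flag is that the two ``symplectic $T^2$'s'' in the local models are not the same subtorus of $T^3$: on the $N_T$ side the symplectic torus is $T$ itself, while on the $W_\cC$ side the symplectic base of the Thurston fibration is $S^1\times \ell$ (the circle factor times the longitude direction). The gluing $\phi$ sends $\partial D^2$ to $\ell$ and $T$ to $S^1\times m$, and it is precisely this exchange that makes the two product models line up after one arranges for $S^1\times m$ to be a symplectic section of the Thurston form; your sketch should make that exchange explicit rather than leaving it as ``check the models agree.'' Second, there is a cleaner and standard way to package the whole argument that avoids the hand interpolation: replace the knot exterior by the $0$-surgery $M_K$, which is a closed surface bundle over $S^1$ since $K$ is fibered; give $S^1\times M_K$ the Thurston symplectic form, in which the section $T' = S^1\times m$ (with $m$ the surgery dual knot) is a self-intersection-zero symplectic torus; and observe that $X_\cC = X\#_{T=T'}(S^1\times M_K)$ is exactly the Gompf symplectic fiber sum. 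Then the existence of $\omega_\cC$, and the fact that it agrees with $\omega$ away from the surgery region, are consequences of Gompf's fiber sum theorem rather than of an ad hoc collar interpolation. Either route is acceptable; the fiber-sum formulation is what is usually cited and is the cleanest way to avoid exactly the orientation/model-matching pitfalls you are worried about.
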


While the concordance surgery construction is very general, we will consider it in the case that $T$ sits nicely in a cusp neighborhood. A cusp neighborhood $C$ is the neighborhood of a cusp singular fiber in an elliptic fibration, see \cite{GS99} for more details. A handle picture for $C$ is given in Figure~\ref{nucleus}. 
\begin{figure}[ht]
\begin{overpic}%[grid,tics=10] 
{}
\put(108, 58){$0$}
\put(63, 2){$-n$}
\end{overpic}
\caption{The surgery diagram on the left without the unknot is a cusp neighborhood. A regular torus fiber can be seen in the neighborhood by taking the punctured torus Seifert surface for the trefoil and capping it off with the core of the 2-handle. The diagram on the left is a Gompf nucleus $N_n$ which clearly contains a cusp neighborhood. On the right is a Weinstein diagram for $N_2$.}
\label{nucleus}
\end{figure}  
The complement of the singular fiber in the cusp neighborhood is a $T^2$ fibration over a punctured disk. We will consider concordance surgery along tori that are fibers in this fibration. 

In our applications below we will actually want the tori along which we do concordance surgery to be slightly more constrained. Specifically we would like them to lie in Gompf nuclei $N_n$  \cite{Gompf91}. This is an enlargement of a cusp neighborhood and can be thought of as a neighborhood of a cusp fiber and a section in an elliptic fibration. See Figure~\ref{nucleus} for a picture of $N_n$.

%%%%%%%%%%%%%%%%%%%%%%%%%%%%%%%%%%%%
\section{Concordance surgery formula}\label{concsurgery}
%%%%%%%%%%%%%%%%%%%%%%%%%%%%%%%%%%%%

In this section, we will prove Theorem~\ref{thm:main1} and Corollary~\ref{cor:main2}. 

We begin with a model case. Let $W_0=T^2\times D^2$ and $\cC=(I\times M, A)$ be a self-concordance of a knot in a homology sphere. Set $W_\cC$ to be the result of concordance surgery on $W_0$ along the torus $T^2\times \{0\}$. 
We will use Proposition~\ref{compute} to compute the Ozsv\'{a}th--Szab\'{o} invariant  of $W_\cC$, and thus we begin by computing 
the cobordism map $F^-_{W_\cC;\omega_\cC}$. Let $W_0$ be a $4$--manifold diffeomorphic to $T^2\times D^2$ and $\omega_0\in\Omega^2(W_0)$ a closed $2$-form on $W_0$ which is Poincar\'{e} dual to $\{p\}\times D^2$. Let $\omega_\cC\in\Omega^2(W_\cC)$ be a closed $2$-form on $W_\cC$ which is Poincar\'{e} dual to $\{p\}\times\Sigma_K$ where $\Sigma_K$ is a Seifert surface of $K$ in $Y\setminus N_K$. Since $\Spin^c(W_0)\cong\Spin^c(W_\cC)\cong\Z$ we can list the $\Spin^c$ structures of $W_0$ and $W_\cC$ as follows. Let $\frt_k\in\Spin^c(W_0)$ be the $\Spin^c$ structure on $W_0$ such that $c_1(\frt_k)=2k\cdot PD[\{p\}\times D^2]$. Similarly, let $\frt'_k\in\Spin^c(W_\cC)$ be the $\Spin^c$ structure on $W_\cC$ such that $c_1(\frt'_k) = 2k\cdot PD[\{p\}\times \Sigma_K]$. 

In \cite{JZ18}, Juh\'{a}sz and Zemke computed $F^+_{W_\cC;\omega_{\cC}}$.

\begin{proposition}[Juh\'{a}sz--Zemke \protect{\cite[Corollary~5.5]{JZ18}}]\label{prop:plus-map}
    Consider $W_\cC$ as a cobordism from $-T^3$ to $S^3$. Then
    \[
        F^+_{W_\cC,\frt_0';\omega_{\cC}} \doteq \Lef_z(\cC) \cdot F^+_{W_0,\frt_0;\omega_0}.
    \]
    For $k \neq 0$, the cobordism map $F^+_{W_\cC,t_k';\omega_{\cC}}$ vanishes.
\end{proposition}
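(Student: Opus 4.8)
The plan is to compute $F^+_{W_\cC;\omega_\cC}$ by recognizing $W_\cC$ as an ``$S^1$-close-up'' of the concordance $\cC$ and then identifying its cobordism map with a \emph{graded Lefschetz trace} of the map that $\cC$ induces on knot Floer homology. Write $S^1=[0,1]/(0\sim1)$, so that $S^1\times M=(I\times M)/{\sim}$ and the torus $T_\cC$ is the image of the concordance annulus $A\subset I\times M$. Deleting a neighborhood $\nu(T_\cC)$, the $4$--manifold $W_\cC$ is obtained from the concordance-complement cobordism $V_\cC:=(I\times M)\setminus\nu(A)$ --- a sutured cobordism from the knot exterior $(M\setminus\nu(K),\gamma)$ to itself, where $\gamma$ is a pair of meridional sutures --- by gluing its two ends together; filling the resulting toroidal boundary piece by a solid torus turns the ends into a fixed closed $3$--manifold $Z$ (a Dehn surgery on $K$), so that, up to this filling, $W_\cC$ is the mapping torus of the induced cobordism $\widehat{V}_\cC\colon Z\to Z$. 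The perturbation $\omega_\cC$, being Poincar\'e dual to $\{p\}\times\Sigma_K$, is chosen precisely so that the Novikov variable $z$ records the Alexander grading coming from $\Sigma_K$.

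Granting this picture, the heart of the proof is a ``close-up $=$ trace'' computation. Using the $\Spin^c$ composition law (Lemma~\ref{prop:composition}) together with the Jabuka--Mark duality pairing (Theorems~\ref{thm:pairing} and~\ref{thm:duality}), closing a cobordism $\widehat{V}_\cC\colon Z\to Z$ up into its mapping torus converts its cobordism map into $\sum_\alpha\langle F^+_{\widehat{V}_\cC}(e_\alpha),\,e_\alpha^{\vee}\rangle$ for dual bases $\{e_\alpha\}$, $\{e_\alpha^{\vee}\}$ of the relevant finitely generated $\Lambda$-modules, that is, into a trace, and the $z$-grading promotes this to a graded trace. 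One then invokes Juh\'asz's identification of the reduced part of the relevant Floer homology of $Z$ with the knot Floer homology $\widehat{HFK}(M,K)$ --- via sutured Floer homology and the Ozsv\'ath--Szab\'o surgery formulas --- under which $F^+_{\widehat{V}_\cC}$ becomes exactly the concordance map $\widehat{F}_{\cC,a}$; that this map is filtered and grading-preserving is due to Juh\'asz--Marengon, and that its graded Lefschetz number is arc-independent is Lemma~\ref{lem:symmetry}. Thus the graded trace equals $\sum_i\Lef\big(\widehat{F}_{\cC,a}|_{\widehat{HFK}(M,K,i)}\big)z^i=\Lef_z(\cC)$.

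To read off the stated identity one compares with the trivial model: $W_0=T^2\times D^2$ is exactly the instance $M=S^3$, $K=U$ the unknot, $\cC$ the product concordance, for which $\widehat{F}_{\cC,a}=\mathrm{id}$ and $\Lef_z(\cC)=\Delta_U(z)=1$. Running the computation above relative to this base case shows that replacing the identity in the trace by $\widehat{F}_{\cC,a}$ multiplies the whole cobordism map by $\Lef_z(\cC)$; translating the $\omega_\cC$-perturbed map into its $\frt'_k$-indexed pieces via Lemma~\ref{lem:triviallytwisted} then isolates the $\frt'_0$ summand and gives $F^+_{W_\cC,\frt'_0;\omega_\cC}\doteq\Lef_z(\cC)\cdot F^+_{W_0,\frt_0;\omega_0}$. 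Since everything lives in the projective transitive system, this identity is only claimed up to multiplication by some $z^x$, which is already built into the symbol $\doteq$; the conjugation symmetry of $\Lef_z$ (Lemma~\ref{lem:symmetry}) is what aligns its Alexander grading with the $\Spin^c$ indexing of $W_\cC$ used on the left.

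The vanishing for $k\neq0$ is the soft part. The class $c_1(\frt'_k)=2k\cdot\mathrm{PD}[\{p\}\times\Sigma_K]$ restricts to the $T^3$ boundary of $W_\cC$ as $2k$ times the Poincar\'e dual of $[\{p\}\times\lambda_K]$, and this class is nonzero in $H_1(T^3)$ even though it bounds inside $W_\cC$; hence $\frt'_k$ restricts to a non-torsion $\Spin^c$ structure on $T^3$. Since $HF^+$ of $T^3$ vanishes in every non-torsion $\Spin^c$ structure (the Ozsv\'ath--Szab\'o computation, which survives passing to Novikov coefficients), the domain of $F^+_{W_\cC,\frt'_k;\omega_\cC}$ is zero, so the map vanishes. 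The step I expect to be the main obstacle is the middle one: making the ``close-up $=$ graded Lefschetz trace'' principle precise in the perturbed, projectivized Heegaard Floer setting, and in particular pinning down the identification of the relevant part of $HF^+(Z)$ with $\widehat{HFK}(M,K)$ on which $\cC$ acts by Juh\'asz's concordance map.
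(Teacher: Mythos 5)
The paper does not prove this proposition itself: it is cited verbatim from Juh\'asz--Zemke \cite[Corollary~5.5]{JZ18}, and the only argument actually carried out in this paper is the subsequent derivation of the minus version (Proposition~\ref{prop:minus-map}) from it by duality. With that caveat, your sketch does capture the broad architecture of the genuine [JZ18] proof --- viewing $W_\cC$ as an $S^1$-close-up of the concordance, interpreting the resulting cobordism map as a graded Lefschetz trace of the induced map on knot Floer homology, and calibrating against the product model $W_0=T^2\times D^2$ --- and your $k\neq0$ vanishing argument is sound: $c_1(\frt'_k)$ restricts to a non-torsion class on the $T^3$ boundary, so the (perturbed) $HF^+$ of $T^3$ in that $\Spin^c$ structure vanishes, killing the domain of the cobordism map.

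The problem is that the ``close-up $=$ graded Lefschetz trace'' step, which you yourself flag as ``the main obstacle,'' is precisely where all the content lies, and the proposal does not supply it. In [JZ18] this occupies their entire Section~5 and requires the surgery formula relating $HF^+$ of the closed manifold to $\widehat{HFK}(M,K)$, a comparison between sutured and closed cobordism maps, and a trace computation done carefully in the projectivized $\Lambda[U]$-module category; it does not follow just from Lemma~\ref{prop:composition} together with the Jabuka--Mark pairing, as your middle paragraph suggests. Saying ``granting this picture'' and then deferring the crux means the proposition is described rather than proved. A smaller point: the role you assign to Lemma~\ref{lem:symmetry}, namely that the conjugation symmetry of $\Lef_z(\cC)$ ``aligns its Alexander grading with the $\Spin^c$ indexing,'' is not how that lemma is used --- in this paper the symmetry enters only in the duality argument for Proposition~\ref{prop:minus-map}, not in the plus version under discussion.
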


By duality, we can prove the minus version of the above proposition.

\begin{proposition}\label{prop:minus-map}
    Consider $W_{\cC}$ as a cobordism from $S^3$ to $T^3$. Then
    \[
        F^-_{W_\cC,\frt_0';\omega_{\cC}}(\theta^-) \doteq \Lef_z(\cC) \cdot F^-_{W_0,\frt_0;\omega_0}(\theta^-).
    \]
    For $k\neq0$, the cobordism map $F^-_{W_\cC,\frt_k';\omega_{\cC}}$ vanishes.
\end{proposition}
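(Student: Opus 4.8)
The plan is to deduce Proposition~\ref{prop:minus-map} from Proposition~\ref{prop:plus-map} by applying the duality pairing of Jabuka--Mark (Theorems~\ref{thm:pairing} and~\ref{thm:duality}) together with Lemma~\ref{lem:symmetry}, which records that $\Lef_z(\cC)$ is symmetric under $z\mapsto z^{-1}$. First I would set up the right cobordisms: viewing $W_\cC$ as a cobordism from $S^3$ to $T^3$, the reversed cobordism $W_\cC'$ (in the sense of Theorem~\ref{thm:duality}) is $W_\cC$ regarded as a cobordism from $-T^3$ to $-S^3 = S^3$, and similarly for $W_0$ regarded as a cobordism from $S^3$ to $T^3$. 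Since $T^3$ with its torsion $\Spin^c$ structures and $S^3$ admit a non-degenerate pairing $\langle\cdot,\cdot\rangle\colon HF^+(\cdot;\omega)\otimes_\Lambda \overline{HF^-(-\cdot;\omega)}\to\Lambda$, it suffices to check equality of $F^-_{W_\cC,\frt_0';\omega_\cC}(\theta^-)$ and $\Lef_z(\cC)\cdot F^-_{W_0,\frt_0;\omega_0}(\theta^-)$ after pairing with an arbitrary element $b\in HF^-(-T^3,\frs|_{T^3};\omega)$.

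The key computation then runs as follows. By Theorem~\ref{thm:duality}, for $a = \theta^+\in HF^+(S^3)$ (here we use that $\theta^-$ and $\theta^+$ are the top/bottom generators and that $HF^\pm(S^3)$ pair non-degenerately), we have
\[
\langle F^-_{W_\cC,\frt_0';\omega_\cC}(\theta^-), b\rangle
= \langle \theta^-, F^+_{W_\cC',\frt_0';\omega_\cC}(b)\rangle.
\]
Now $W_\cC'$ is exactly $W_\cC$ viewed as a cobordism from $-T^3$ to $S^3$, so Proposition~\ref{prop:plus-map} applies and gives $F^+_{W_\cC',\frt_0';\omega_\cC}(b) \doteq \Lef_z(\cC)\cdot F^+_{W_0',\frt_0;\omega_0}(b)$ on the $\frt_0'$ part, and $0$ on each $\frt_k'$ with $k\neq 0$. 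Since the pairing is $\Lambda$-linear (Theorem~\ref{thm:pairing}) and, by Lemma~\ref{lem:symmetry}, conjugating $\Lef_z(\cC)$ via the conjugate module structure on $\overline{HF^-}$ returns $\Lef_z(\cC)$ back (up to the overall $z^x$-ambiguity that $\doteq$ already absorbs), we get
\[
\langle F^-_{W_\cC,\frt_0';\omega_\cC}(\theta^-), b\rangle
\doteq \Lef_z(\cC)\cdot \langle \theta^-, F^+_{W_0',\frt_0;\omega_0}(b)\rangle
\doteq \Lef_z(\cC)\cdot \langle F^-_{W_0,\frt_0;\omega_0}(\theta^-), b\rangle,
\]
where the last step is Theorem~\ref{thm:duality} applied in reverse to $W_0$. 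Non-degeneracy of the pairing over $\Lambda$ (for the relevant torsion $\Spin^c$ structures on $T^3$) then forces $F^-_{W_\cC,\frt_0';\omega_\cC}(\theta^-)\doteq \Lef_z(\cC)\cdot F^-_{W_0,\frt_0;\omega_0}(\theta^-)$. The same argument with $F^+_{W_\cC',\frt_k';\omega_\cC} = 0$ for $k\neq 0$ gives the vanishing statement for $\frt_k'$.

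The main obstacle I anticipate is bookkeeping rather than substance: one must be careful that the $\Spin^c$ structures $\frt_k'$ on $W_\cC$ really do restrict to torsion $\Spin^c$ structures on $T^3$ (so that the Jabuka--Mark pairing is non-degenerate there), and that under orientation reversal the labelling $\frt_k'\mapsto\frt_k'$ is the correct matching for the duality theorem — in particular that $\frt_0'$ is self-conjugate, which is what lets the single $\Spin^c$ term on each side correspond. One also needs to confirm that the $2$-form $\omega_\cC$ restricts compatibly under the orientation reversal so that Theorem~\ref{thm:duality} applies with the same $\omega$ on both sides; since $\omega_\cC$ is compactly supported away from the boundary $T^3$ this is automatic, but it should be checked. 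Finally, the manipulation $\langle \theta^-, F^+_{W_0',\frt_0;\omega_0}(b)\rangle \doteq \langle F^-_{W_0,\frt_0;\omega_0}(\theta^-), b\rangle$ is again Theorem~\ref{thm:duality} for the simpler product-type cobordism $W_0 = T^2\times D^2$, where everything is explicitly computable, so no difficulty is expected there.
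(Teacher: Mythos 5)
Your strategy is the paper's: apply the Jabuka--Mark duality to reduce the minus-map statement to the known plus-map formula of Proposition~\ref{prop:plus-map}, and use the $z\mapsto z^{-1}$ symmetry of $\Lef_z(\cC)$ from Lemma~\ref{lem:symmetry} to absorb the conjugation coming from the anti-linearity of the pairing. However, your central display does not typecheck. You propose to pair $F^-_{W_\cC,\frt_0';\omega_\cC}(\theta^-)\in HF^-(T^3)$ against $b\in HF^-(-T^3,\dots)$, but the Jabuka--Mark pairing of Theorem~\ref{thm:pairing} is
\[
\langle\cdot,\cdot\rangle\colon HF^+(Y,\frs;\omega)\otimes_{\Lambda}\overline{HF^-(-Y,\frs;\omega)}\longrightarrow\Lambda,
\]
so exactly one slot must carry a plus-flavor class, while both of your arguments are minus-flavor. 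Relatedly, the aside ``for $a=\theta^+$'' is a red herring: $\theta^+$ plays no role in the argument, and the duality identity you write with $F^-$ on the left of a pairing and $F^+$ on the right is not the form given in Theorem~\ref{thm:duality}.

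The fix is to pair $F^-_{W_\cC,\frt_0';\omega_\cC}(\theta^-)$ against an arbitrary $a\in HF^+(T^3;\tau)$ rather than against $b\in HF^-(-T^3)$, and to apply Theorem~\ref{thm:duality} to $W_\cC$ read as a cobordism from $-T^3$ to $S^3$, the orientation already used in Proposition~\ref{prop:plus-map}. This gives
\[
\langle a, F^-_{W_\cC,\frt_0';\omega_\cC}(\theta^-)\rangle \;=\; \langle F^+_{W_\cC,\frt_0';\omega_\cC}(a),\theta^-\rangle
\;\doteq\; \Lef_z(\cC)\,\langle F^+_{W_0,\frt_0;\omega_0}(a),\theta^-\rangle
\;=\; \Lef_z(\cC)\,\langle a, F^-_{W_0,\frt_0;\omega_0}(\theta^-)\rangle ,
\]
and by anti-linearity and Lemma~\ref{lem:symmetry} the right-hand side equals $\langle a,\Lef_z(\cC)\cdot F^-_{W_0,\frt_0;\omega_0}(\theta^-)\rangle$. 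Non-degeneracy of the pairing then gives the claimed formula, and the same argument with the vanishing of $F^+_{W_\cC,\frt_k'}$ for $k\neq 0$ gives the vanishing of $F^-_{W_\cC,\frt_k'}$. So your plan is correct in outline and identical in spirit to the paper's proof; the error is which side of the pairing each term must live on.
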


\begin{proof}
    Let $\tau = \omega_{\cC}|_{T^3}$ and $a$ an element in $HF^+(T^3;\tau)$. Then
    \begin{align*}
        \langle a, F^-_{W_\cC,\frt_0';\omega_\cC}(\theta^-) \rangle &= \langle F^+_{W_\cC,\frt_0';\omega_\cC}(a), \theta^- \rangle\\
        &= \Lef_z(\cC) \langle F^+_{W_0,\frt_0;\omega_0}(a), \theta^- \rangle\\
        &= \Lef_z(\cC) \langle a, F^-_{W_0,\frt_0;\omega_0}(\theta^-) \rangle\\
        &= \langle a, \Lef_{z^{-1}}(\cC) \cdot F^-_{W_0,\frt_0;\omega_0}(\theta^-) \rangle\\
        &= \langle a, \Lef_{z}(\cC) \cdot F^-_{W_0,\frt_0;\omega_0}(\theta^-) \rangle
    \end{align*}
%    We may justify the above equation as follows. 
The first and third equality follows from the duality, Theorem~\ref{thm:duality}. The second equality follows from Proposition~\ref{prop:plus-map}. The fourth equality follows from the anti-linearity of the pairing. The last equation follows from the fact that $\Lef_z(\cC)$ is symmetric under the conjuation $z\mapsto z^{-1}$ as noted in Lemma~\ref{lem:symmetry}. Now the desired formula follows from the nondegeneracy of the pairing given in Theorem~\ref{thm:pairing}. The same proof works for $k\neq0$.
\end{proof}

Now we can obtain the minus version of \cite[Corollary~5.6]{JZ18}. The proof is identical.

\begin{corollary}\label{cor:localsurgery}
    Suppose $\omegas = (\omega_1,\ldots,\omega_n)$ is a collection of closed $2$-forms on $(X,\partial X)$ satisfying
    \[
        \int_T \omega_1 =1 \quad\text{and}\;\; \int_T \omega_i =0 \quad\text{for $i>1$},
    \]
    and $\omegas' = (\omega'_1,\ldots,\omega'_n)$ is the collection of induced $2$-forms on $(X_\cC, \partial X_\cC)$ under the canonical isomorphism $H^2(X_\cC, \partial X_\cC;\R)\cong H^2(X, \partial X;\R)$. Then
    \[
        F^-_{X_\cC,\frt'_0;\omegas'|_{X_\cC}}\doteq\Lef_z(\cC)\cdot F^-_{X_0,\frt_0;\omegas|_{X_0}}.
    \]
    For other $\Spin^c$ structures, both cobordism maps vainish.
\end{corollary}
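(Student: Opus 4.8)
The plan is to deduce Corollary~\ref{cor:localsurgery} from the local computation in Proposition~\ref{prop:minus-map} by excising the surgery region and invoking the $\Spin^c$ composition law. First I would write $X = (X\setminus N_T)\cup_\partial W_0$ with $W_0 = T^2\times D^2 = N_T$, and correspondingly $X_\cC = (X\setminus N_T)\cup_\partial W_\cC$. Viewing everything as a cobordism from $S^3$ to $Y$, I would cut $X$ (resp.\ $X_\cC$) along $\partial N_T = T^3$, so that $X = V \cup W_0$ and $X_\cC = V \cup W_\cC$ where $V := \overline{(X\setminus N_T) - B^4}$ is the common piece, thought of as a cobordism from $S^3$ to $T^3$, and $W_0$ (resp.\ $W_\cC$) is a cobordism from $T^3$ to $Y$. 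Wait — I need to be careful about orientations: in the model Proposition~\ref{prop:minus-map} $W_\cC$ is a cobordism from $S^3$ to $T^3$, so in the application the roles are reversed and $W_0$, $W_\cC$ are cobordisms from $T^3$ to $Y$; the relevant map is $F^-_{W_\cC}\colon HF^-(T^3;\cdot)\to HF^-(Y;\cdot)$, but the computation of Proposition~\ref{prop:minus-map} still applies by the same duality argument since it only used the structure of $W_\cC$ near the surgery torus (and the fact that $Y$ here plays the role of $S^3$ there only entered through finiteness of the relevant $\Spin^c$ sum). Actually the cleanest route is to keep $W_0,W_\cC$ as cobordisms from $S^3$ to $T^3$ and let $V$ be the cobordism from $T^3$ to $Y$, so $F^-_{X_\cC} = F^-_V\circ F^-_{W_\cC}$ and likewise for $X$.

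The key steps, in order: (1) Apply the $\Spin^c$ composition law, Lemma~\ref{prop:composition}, to the decompositions $X = W_0\cup V$ and $X_\cC = W_\cC\cup V$, with the $2$-forms $\omegas$ (resp.\ $\omegas'$) restricted to each piece; this gives $F^-_{X,\frS;\omegas}\doteq F^-_{V;\omegas|_V}\circ F^-_{W_0,\frt_0;\omegas|_{W_0}}$ after restricting to the $\Spin^c$ structure $\frt_0$ on $W_0$, and similarly for $X_\cC$ with $\frt'_0$. (2) Observe that since $\int_T\omega_1 = 1$ and $\int_T\omega_i = 0$ for $i>1$, the restriction $\omegas|_{W_0}$ on $W_0 = T^2\times D^2$ is cohomologous to $(\mathrm{PD}[\{p\}\times D^2],0,\ldots,0)$ (each $\omega_i$ being exact on $T^3$ for $i>1$ and $\omega_1$ pairing to $1$ with $T$), and likewise $\omegas'|_{W_\cC}$ is cohomologous to $(\mathrm{PD}[\{p\}\times\Sigma_K],0,\ldots,0)$ under the canonical identification; by Lemma~\ref{lem:changing2form} the cobordism maps are unchanged under such exact modifications, so we reduce to exactly the forms $\omega_0,\omega_\cC$ of Proposition~\ref{prop:minus-map} in the first variable (and trivially-twisted in the others, where Lemma~\ref{lem:triviallytwisted} lets one read off the $z_i$-exponents, all zero here). (3) Plug in Proposition~\ref{prop:minus-map}: $F^-_{W_\cC,\frt'_0;\omega_\cC}(\theta^-)\doteq\Lef_z(\cC)\cdot F^-_{W_0,\frt_0;\omega_0}(\theta^-)$, and the maps vanish for $\frt'_k$, $k\neq 0$. (4) Post-compose with $F^-_{V;\omegas|_V}$, which is $\Lambda$-linear, so the scalar $\Lef_z(\cC)\in\Lambda$ pulls through: $F^-_{X_\cC,\frt'_0;\omegas'|_{X_\cC}}\doteq\Lef_z(\cC)\cdot F^-_{X_0,\frt_0;\omegas|_{X_0}}$. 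The vanishing for other $\Spin^c$ structures on $X_\cC$ follows because any $\Spin^c$ structure on $X_\cC$ not restricting to $\frt'_0$ on $W_\cC$ already has $F^-_{W_\cC}$ vanishing, hence the composite vanishes.

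The main obstacle I anticipate is not the composition-law bookkeeping but making sure the hypotheses needed to invoke Lemma~\ref{prop:composition} and Lemma~\ref{lem:changing2form} are genuinely met in the $\circ = -$ setting: one needs a \emph{finiteness} condition (only finitely many $\frs$ in the relevant $\Spin^c$ subset with nonvanishing cobordism map) for the minus-flavored composition to be well-defined. This is where the ambient hypothesis on $X$ — "$4$--manifold with the same conditions," i.e.\ connected boundary and $b_2^+\geq 2$, together with Lemma~\ref{lem:finitespinc} applied to the admissible cut — must be threaded through: the cut giving $V$ and $W_0$ (resp.\ $W_\cC$) should be arranged to be an admissible cut, or at least to have $b_2^+$ positive on the $V$ side so that only finitely many $\Spin^c$ structures on $V$ contribute; on the $W_0$ side the argument of Proposition~\ref{prop:minus-map} already shows only $\frt_0$ survives. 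The secondary subtlety is the canonical isomorphism $H^2(X_\cC,\partial X_\cC;\R)\cong H^2(X,\partial X;\R)$ and its compatibility with $\Spin^c(W_\cC)\cong\Spin^c(W_0)\cong\Z$ and with restriction to $V$ — this is exactly the content of the discussion in Section~\ref{KCS} that concordance surgery preserves homology and the intersection form, so the identification of $2$-forms and $\Spin^c$ structures on the two sides is the natural one, and $\frt'_0$ corresponds to $\frt_0$. Once these compatibilities are pinned down the rest is a direct citation of the model case, just as in \cite[Corollary~5.6]{JZ18}.
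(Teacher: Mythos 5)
The paper's own ``proof'' is a one-liner: it says the argument is identical to \cite[Corollary~5.6]{JZ18}, which is the plus-flavored version. What that amounts to is exactly your steps (2)--(3): observe that because $\int_T\omega_1=1$ and $\int_T\omega_i=0$ for $i>1$, the restriction of the $n$-tuple $\omegas$ to the model piece $T^2\times D^2$ is, up to exact forms (which Lemma~\ref{lem:changing2form} shows do not change the cobordism map), equal to $(\omega_0,0,\dots,0)$; the corresponding statement holds on the surgered piece; and then Proposition~\ref{prop:minus-map}, together with Lemma~\ref{lem:triviallytwisted} to handle the trivially-twisted coordinates $z_2,\dots,z_n$, gives the Lefschetz-number identity and the vanishing for $k\neq0$. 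So the mathematical core of your proposal is right.

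The place where you and the paper part ways is in what the corollary is a statement \emph{about}. The notation $F^-_{X_\cC,\frt'_0;\omegas'|_{X_\cC}}$ and $F^-_{X_0,\frt_0;\omegas|_{X_0}}$ is a typographical slip for $F^-_{W_\cC,\frt'_0;\omegas'|_{W_\cC}}$ and $F^-_{W_0,\frt_0;\omegas|_{W_0}}$: the $\Spin^c$ structures $\frt_0,\frt'_0$ were defined only on $W_0=T^2\times D^2$ and $W_\cC$, the restriction notation $\omegas'|_{X_\cC}$ is vacuous for the global manifold, and in the proof of Theorem~\ref{thm:main1} the corollary is invoked precisely to compare $F^-_{W_0;\omegas|_{W_0}}$ with $F^-_{W_\cC;\omegas'|_{W_\cC}}$ after the composition law has already been applied. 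So the corollary is a purely local statement about the model pieces with ambient multi-variable twisting. Your steps (1) and (4) --- excising $W_0$, invoking Lemma~\ref{prop:composition}, and pushing the scalar $\Lef_z(\cC)$ through $F^-_V$ --- are not part of this corollary's proof; they are exactly the content of the proof of Theorem~\ref{thm:main1} that follows it. In effect you have proved the stronger statement (the theorem) rather than the lemma used inside it. That is not wrong, but it does mean your proposal leans on finiteness and admissible-cut hypotheses that the corollary itself does not need, and it obscures that the corollary is nothing more than the change-of-twisting reduction to Proposition~\ref{prop:minus-map}.

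One small point worth tightening in step (2): when you say $\omega_i|_{W_0}$ is ``exact on $T^3$'' for $i>1$, what you actually use is that $[\omega_i|_{W_0}]=0$ in $H^2(W_0;\R)\cong\R$ because it is detected by $\int_T\omega_i$; exactness on the boundary is then automatic but not the thing being invoked. With that rephrasing and with steps (1) and (4) deleted, your argument matches the intended proof exactly.
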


%\begin{proof}
%    This immediately follows from the fact that $\omega|_{W_0}$ and $\omega'|_{W_\cC}$ can be arranged to be $\omega_0$ and $\omega_\cC$ respectively.
%\end{proof}

Now we are ready to prove Theorem~\ref{thm:main1} that says if $X_\cC$ is the result of $\cC$ concordance surgery on $X$ then the Ozsv\'ath-Szab\'o polynomial of $X_\cC$ is that of $X$ multiplied by the graded Lefschetz number of $\cC$.

\begin{proof}[Proof of Theorem~\ref{thm:main1}]
Given our $4$--manifold with boundary $X$ that contains a torus $T$ with trivial normal bundle that is non-trivial in homology, choose an admissible cut $N$ the breaks $X$ into two pieces $W_1$ and $W_2$ where $W_1$ contains $T$ and $W_2$ contains the boundary $\partial X$. 

Let $\omegas = (\omega_1,\ldots,\omega_n)$ be a collection of closed $2$-forms on $(X,\partial X)$ such that $\int_T \omega_1=1$ and $\int_T \omega_i=0$ for $i>1$. Moreover, suppose that $\b=(b_1,\ldots,b_n)$, where $b_i=[\omega_i]\in H^2(X,\partial X;\R)$, is a basis of $H^2(X,\partial X)$. By Lemma~\ref{lem:modifying2form}, we can assume that $\omega_i|_N = 0$. From Proposition~\ref{compute}, %Lemma~\ref{lem:triviallytwisted}, 
we have
\[
    \Phi_{X;\b} \doteq F^+_{W_2;\omegas|_{W_2}} \circ \tau^{-1} \circ F^-_{W_1;\omegas|_{W_1}}(\theta^-).
\]

We can decompose $W_1$ as  $W \cup W_0$ where $W_0$ is a neighborhood of $T$ and $W = W_1\setminus \textrm{int}(W_0)$. The composition law (Proposition~\ref{prop:composition}) gives
\[
    F^-_{W_1;\omegas|_{W_1}}\doteq F^-_{W;\omegas|_{W}}\circ F^-_{W_0;\omegas|_{W_0}}.
\]
Let $W_1'$ be $W \cup W_\cC$. Then $X_\cC = W_2\cup W_1'$ and 
\[
    \Phi_{X_\cC;\omegas'} \doteq  F^+_{W_2;\omegas'|_{W_2}} \circ \tau^{-1} \circ F^-_{W_1';\omegas'|_{W_1'}}(\theta^-).
\]
We may apply the composition law again and obtain
\[
    F^-_{W_1';\omegas'|_{W_1'}}\doteq F^-_{W;\omegas'|_{W}}\circ F^-_{W_\cC;\omegas'|_{W_\cC}} 
\]
Since the surgery procedure does not modify $W$ and $W_2$, $\omegas'|_{W} = \omegas|_{W}$ and $\omegas'|_{W_2} = \omegas|_{W_2}$. Now the theorem follows from Corollary~\ref{cor:localsurgery}. 
\end{proof}

Now we prove Corollary~\ref{cor:main2}, following the proof of Corollary~1.2 in \cite{JZ18}. Recall this corollary says that the result of two different concordance surgeries on a $4$--manifold $X$ with non-trivial Ozsv\'ath-Szab\'o polynomial are not diffeomorphic if the their graded Lefschetz numbers are different. 

\begin{proof}[Proof of Corollary~\ref{cor:main2}]
    We will show that $X_{\cC}$ and $X_{\cC'}$ are not diffeomorphic if $\Lef_z(\cC)\neq \Lef_z(\cC')$. 
    Suppose $\phi:X_\cC\rightarrow X_{\cC'}$ is a diffeomorphism. By the naturality of Heegaard Floer homology, $\phi|_Y$ induces an automorphism on $HF^+(Y)$; see \cite{JTZ12}. We denote this automorphism by  $\phi_*$. Then we have
    \[
        \Phi_{X_{\cC'},\frs} = \phi_*(\Phi_{X_\cC,\phi^*(\frs)})
    \] 
Notice that $\phi_*$ naturally extends to a $\F_2[\Z^n]$-module automorphism on $\F_2[\Z^n]\otimes_{\F_2} HF^+(Y)$. 
After identifying $H^2(X_\cC,\partial X_\cC)$ and $H^2(X_{\cC'},\partial X_{\cC'})$ with $\Z^n$ via the basis $\b$ the action of $\phi^*$ on cohomology can be thought of as an element $M(\phi^*)$ of $\GL_n(\Z)$. We write $e^{M(\phi^*)^t}$ for the endomorphism of $\F_2[\Z^n] \otimes_{\F_2} HF^+(Y)$ given by $e^{M(\phi^*)^t} \cdot \z^\a \otimes b = \z^{M(\phi^*)^t \cdot \a} \otimes b$, where we view $\a$ as a column vector and $b \in HF^+(Y)$. With this notation we have the following equalities: 
    \begin{align*}
        \Phi_{X_{\cC'};\b} &= \sum_{\frs\in\Spin^c(X)} \z^{\langle i_*(\frs-\frs_0)\cup \b,[X,\partial X]\rangle}\cdot\Phi_{X_{\cC'},\frs}\\
        &= \sum_{\frs\in\Spin^c(X)} \z^{\langle \phi^*i_*(\frs-\frs_0)\cup\phi^*(\b),[X_\cC,\partial X_\cC]\rangle}\cdot\phi_*\left(\Phi_{X_\cC,\phi^*(\frs)}\right)\\
        &= \sum_{\frs\in\Spin^c(X)} \z^{\langle i_*(\frs-\phi^*(\frs_0))\cup\phi^*(\b),[X_\cC,\partial X_\cC]\rangle}\cdot\phi_*\left(\Phi_{X_\cC,\frs}\right)\\
        &\doteq \sum_{\frs\in\Spin^c(X)} \z^{\langle i_*(\frs-\frs_0)\cup\phi^*(\b),[X_\cC,\partial X_\cC]\rangle}\cdot\phi_*\left(\Phi_{X_\cC,\frs}\right)\\
        &= e^{M(\phi^*)^t}\cdot\sum_{\frs\in\Spin^c(X)} \z^{\langle i_*(\frs-\frs_0)\cup \b,[X_\cC,\partial X_\cC]\rangle}\cdot\phi_*\left(\Phi_{X_\cC,\frs}\right)\\
        &= \phi_*(e^{M(\phi^*)^t}\cdot\Phi_{X_\cC;\b}).
    \end{align*}
The first equality follows from the definition. The second equality follows from the naturality of the Heegaard Floer maps and of cohomology. The third equality follows from rearranging the sum and noting that as the sum goes over all of $\Spin^c(X)$ the $\phi^*(\frs)$ also ranges over all of $\Spin^c(X)$. The fourth equality follows since $\Phi_{X;\b}$ is invariant up to overall multiplication by a monomial and of the choice of base $\Spin^c$ structure $\frs_0$. The fifth equality is essentially the definition of $e^{M(\phi^*)^t}$.  The last equality follows from the definition of $\Phi_{X_\cC;\b}$ and the linearity of $\phi_*$.

The invariant $\Phi_{X_{\cC};\b}$ as an element of $\F_2[\Z^n]\otimes_{\F_2} HF^+(Y)$ can be written 
\[
\sum_{i=1}^k f_i\otimes x_i
\]
where $f_i\in \F_2[\Z^n]$ is a polynomial in $n$ variables and $x_i\in HF^+(Y)$. So we see that $\Phi_{X_{\cC'};\b}$ has the form
\[
\sum_{i=1}^k (e^{M(\phi^*)^t}\cdot f_i)\otimes \phi_*(x_i).
\]
Let $f_\cC$ be the greatest common divisor of $\{f_1, \ldots, f_k\}$. %Then $e^{M(\phi^*)^t} \cdot f$ is the greatest common divisor of the $\F_2[\Z^n]$ terms in $\Phi_{X_{\cC'};\b}$. 
We claim that $f_{\cC'}=e^{M(\phi^*)^t} \cdot f_\cC$ is the greatest common divisor of the $\F_2[\Z^n]$ terms in $\Phi_{X_{\cC'};\b}$. Since $e^{M(\phi^*)^t}\cdot (fg) = (e^{M(\phi^*)^t}\cdot f)(e^{M(\phi^*)^t}\cdot g)$, clearly $f_{\cC'}$ is a common divisor of $\{e^{M(\phi^*)^t} \cdot f_1, ..., e^{M(\phi^*)^t} \cdot f_k\}$. Since $\phi_*$ is an automorphism of $\F_2[\Z^n] \otimes_{\F_2} HF^+(Y)$, $e^{M(\phi^*)^t}$ is also an automorphism of $\F_2[\Z^n]$. Thus $f_{\cC'}$ is the greatest common divisor of $\{e^{M(\phi^*)^t} \cdot f_1, ..., e^{M(\phi^*)^t} \cdot f_k\}$. %However, since $\Lef_z(\cC)\neq 1$, the greatest common divisor of the $\F_2[\Z^n]$ terms in $\Phi_{X_\cC;\b}$ has more common irreducible factors than that of $\Phi_{X;\b}$ by Theorem~\ref{thm:main1}. Thus $X$ and $X_\cC$ are not diffeomorphic. 

Let $\alpha\in \F_2[\Z^n]$ be an irreducible element and let $f\in \F_2[\Z^n]$ be any element and $\psi$ an automorphism of $\F_2[\Z^n]$. 
In \cite{Sunukjian15}, Sunukjian defined the invariant $\Gamma_{\alpha, \psi}(f)$ to be the  the number of elements of the form $\psi^n(\alpha)$ and $\psi^n(\overline \alpha)$ that can be factored out of $f$, where $\overline \alpha$ just negates the $\Z^n$ elements of $\F_2[\Z^n]$.

In \cite{Sunukjian15}, Sunukjian showed that $\Gamma_{\alpha, \psi}(f)=\Gamma_{\alpha, \psi}(f)\circ \psi$  and that $\Gamma_{\alpha, \psi}(fg)=\Gamma_{\alpha, \psi}(f)+\Gamma_{\alpha, \psi}(g)$. He also showed that if $\Delta_K(z)\not=\Delta_{K'}(z)$, then there is some $\alpha\in \F_2[\Z^n]$ such that $\Gamma_{\alpha, \psi}(\Delta_K(z))>\Gamma_{\alpha, \psi}(\Delta_{K'}(z))$. The proof only used the fact that the Alexander polynomial is symmetric under sending $z$ to $z^{-1}$. But as noted in Lemma~\ref{lem:symmetry} the graded Lefschetz number $\Lef_z(\cC)$ has this same symmetry. So there is some $\alpha\in \F_2[\Z^n]$ such that $\Gamma_{\alpha, \psi}(\Lef_z(\cC))>\Gamma_{\alpha, \psi}(\Lef_z(\cC'))$.

%The existence of the diffeomorphism $\phi$ implies that if $f_\cC$ is the greatest common divisor of the $\F_2[\Z^n]$ terms in $\Phi_{X_\cC;\b}$ and $f_{\cC'}$ is the greatest common divisor of the $\F_2[\Z^n]$ terms in $\Phi_{X_{\cC'};\b}$, 

As noted above $f_{\cC'}=e^{M(\phi^*)^t} \cdot f_\cC$ so we have that $\Gamma_{\alpha,\phi^*}(f_\cC)=\Gamma_{\alpha, \phi^*}(f_{\cC'})$ (since $e^{M(\phi^*)^t}$ is the action of $\phi^*$ on cohomology in the basis above). However, Theorem~\ref{thm:main1} says that  $\Phi_{X_\cC;\b} =\Lef_{z_1}(\cC) \cdot \Phi_{X;\b}$ and $\Phi_{X_{\cC'};\b} = \Lef_{z_1}(\cC) \cdot \Phi_{X;\b}.$ Thus if $f$ is the greatest common divisor of the $\F_2[\Z^n]$ terms in $\Phi_{X;\b}$, then $f_\cC=\Lef_{z_1}(\cC)  f$ and $f_{\cC'}=\Lef_{z_1}(\cC') f$ and we see that 
\[
\Gamma_{\alpha, \phi^*}(f)+ \Gamma_{\alpha, \phi^*}(\Lef_{z_1}(\cC))=\Gamma_{\alpha, \phi^*}(f_\cC)=\Gamma_{\alpha, \phi^*}(f_{\cC'})=\Gamma_{\alpha, \phi^*}(f)+ \Gamma_{\alpha, \phi^*}(\Lef_{z_1}(\cC')).
\]
So we see that $ \Gamma_{\alpha, \phi^*}(\Lef_{z_1}(\cC))= \Gamma_{\alpha, \phi^*}(\Lef_{z_1}(\cC'))$, contradicting the choice of $\alpha$ above. Hence the diffeomorphism $\phi$ cannot exist. 
\end{proof}

\section{Symplectic caps}\label{caps}
%%%%%%%%%%%%%%%%%%%%%%%%%%%%%%%%%%%%

We begin by proving Theorem~\ref{cap} that says any closed contact $3$--manifold $(Y,\xi)$ has a (strong) symplectic cap $(X,\omega)$ that is simply connected and contains a Gompf nucleus $N_2$ whose regular fiber is symplectic and has simply connected complement. Moreover, $\Phi_{X,\frs_0}=c^+(\xi)\in H^+(-Y,\frs_0|_Y)$ for the canonical $\Spin^c$ structure $\frs_0$ for $(X,\omega)$.  

\begin{proof}[Proof of Theorem~\ref{cap}]
    We will build the strong symplectic cap $(X,\omega)$ in four steps.
    
    \noindent{\bf Step 1.} {\em Construct a simply connected cobordism $(X_1,\omega_1)$ from $(Y,\xi)$ to another contact manifold $(Y',\xi')$, that contains $N_2$.}

    The cobordism $(X_1,\omega_1)$ is built by adding Weinstein 2--handles to the convex end of the trivial symplectic cobordism $([0,1]\times Y, d(e^t\alpha))$, where $\alpha$ is a contact form for $\xi$ and $t$ is the coordinate on $[0,1]$. We begin by noting that one may attach a sequence of 2--handles to $[0,1]\times Y$ to kill the fundamental group as each 2--handle adds a relation to the fundamental group. The attaching circles of the 2--handles may be made Legendrian and the framings can be taken to be one less that the contact framings, thus we can take the handle attachments to be Weinstein 2--handle attachments. We finally attach two more Weinstein 2--handles as shown on the right hand side of Figure~\ref{nucleus}. The resulting cobordism is $(X_1,\omega_1)$.
    
    \noindent{\bf Step 2.} {\em Construct a cobordism $(X_2,\omega_2)$ consisting of Weinstein $2$--handle attachments from $(Y',\xi')$ to the contact manifold $(Y'',\xi'')$ where $Y''$ is a homology sphere.}

    A more detailed version of this argument may be found in \cite{Etnyre06}, but we sketch it here for the reader's convenience. 
    Let $(\Sigma,\phi)$ be an open book supporting the contact structure $(Y',\xi')$. By stabilizing the open book we can assume that $\Sigma$ has a single boundary component. Let $g$ be the genus of $\Sigma$. It is well-known that the mapping class group of $\Sigma$ is generated by Dehn twists about $\alpha_1,\ldots, \alpha_{2g+1}$ shown in Figure~\ref{surface}. All facts we use about diffeomorphisms of surfaces are well-known and can be found, for example, in \cite{FM12}.
    \begin{figure}[ht]
    \begin{overpic}%[grid,tics=10] 
    {}
   % \put(109, 0){$\Sigma$}
    \put(10, 73){$\alpha_1$}
    \put(73, 73){$\alpha_3$}
    \put(55, 53){$\alpha_2$}
    \put(123, 33){$\alpha_4$}
    \put(115, 92){$\beta$}
    \put(115, 14){$\alpha_{2g+1}$}
    \put(184, 80){$\alpha_{2g}$}
    \put(240, 54){$\gamma$}
   % \put(361, 73){$\gamma'$}
    \end{overpic}
    \caption{The surface $\Sigma$.}
    \label{surface}
    \end{figure}  
%        \begin{figure}[ht]
%    \begin{overpic}[grid,tics=10] 
%    {surface}
%   % \put(109, 0){$\Sigma$}
%    \put(10, 90){$\alpha_1$}
%    \put(66, 90){$\alpha_3$}
%    \put(55, 53){$\alpha_2$}
%    \put(105, 53){$\alpha_4$}
%    \put(103, 110){$\beta$}
%    \put(103, 35){$\alpha_{2g+1}$}
%    \put(168, 96){$\alpha_{2g}$}
%    \put(212, 73){$\gamma$}
%    \put(361, 73){$\gamma'$}
%    \end{overpic}
%    \caption{The surface $\Sigma$.}
%    \label{surface}
%    \end{figure}  
    Fix some factorization of $\phi$ in terms of these Dehn twists. We will begin Step 2 by attaching $2$--handles so that the upper boundary has monodromy that is a composition of Dehn twist about only the curves $\alpha_1,\ldots, \alpha_{2g}$. We may do this as follows. If there are any negative Dehn twists about $\alpha_{2g+1}$ in the factorization then after conjugating $\phi$ (which does not change the contact manifold supported by the open book) we can assume it is at the end of the factorization, then we can attach a Weinstein 2--handle to $\alpha_{2g+1}$ as in Theorem~\ref{thm:2h}. This gives a cobordism where the upper boundary had the right handed Dehn twist about $\alpha_{2g+1}$ added, and this cancels the left handed Dehn twist. So we can now assume there are only right handed Dehn twists about $\alpha_{2g+1}$. If there is one, conjugate it to the end of the factorization and add a Weinstein 2--handle to $\beta$. Now apply the chain relation 
    \[
    \tau_{\alpha_{2g+1}}\tau_\beta= (\tau_{\alpha_1}\tau_{\alpha_2}\tau_{\alpha_3})^4
    \]
    to remove the right handed Dehn twist about $\alpha_{2g+1}$ (and about $\beta$). We now have a symplectic cobordism with upper boundary a composition of Dehn twists about only the curves $\alpha_1,\ldots, \alpha_{2g}$. By attaching Weinstein 2--handles as in Theorem~\ref{thm:2h} to the curves $\alpha_1,\ldots, \alpha_{2g}$ as necessary we can arrange that the upper boundary is supported by an open book with factorization a power of $(\tau_{\alpha_1}\cdots \tau_{\alpha_{2g}})^{4g+2}$. Then applying the chain relation 
    \[
    (\tau_{\alpha_1}\cdots \tau_{\alpha_{2g}})^{4g+2}=\tau_\gamma
    \]
    we have a symplectic cobordism with upper boundary having monodromy $\tau_\gamma^n$ for some $n$. Now attach a Weinstein $2$--handle to each of $\alpha_1,\ldots, \alpha_n$. The upper boundary of this cobordism now has monodromy $\tau_{\alpha_1}\cdots \tau_{\alpha_{2g}}\tau_\gamma^n$. It is easy to check that  the open book with monodromy $\tau_{\alpha_1}\cdots \tau_{\alpha_{2g}}$ defines a knot in $S^3$ and adding $\tau_\gamma^n$ to the monodromy corresponds to doing $-1/n$ surgery on this knot. Thus the upper boundary of the cobordism is a homology sphere. 

%    We can now attach Weinstein 1--handles so that the upper boundary is supported by an open book $(\Sigma',\phi')$ where $\Sigma'$ is shown in Figure~\ref{surface} and $\phi'$ is simply $\tau_\gamma^n$. We can choose the genus of $\Sigma'$ large enough so that after adding more Dehn twists we can apply the chain relation again to see that the monodromy is $\tau_{\gamma'}$. It is easy to see that the resulting manifold is the claimed $S^1$--bundle over the capped off $\Sigma'$ and the contact structure is as claimed, \cite{EO06}.

    \noindent{\bf Step 3.} {\em Construct a strong symplectic cap $(X_3,\omega_3)$ for $(Y'',\xi'')$ such that $b_2^+(X_3) \geq 2$.}

 The construction of the cap is due to  Eliashberg \cite{Eliashberg04}. %was given by Ozsv\'{a}th and Szab\'{o} in \cite{OS04c}, and Plamenevskaya in \cite{Plamenevskaya04}. 
 Here we just briefly review the construction. Consider an open book $(\Sigma', \phi')$ for $(Y'', \xi'')$ from Step 2. Eliashberg \cite{Eliashberg04} showed that by attaching a $2$-handle to the binding of the open book with framing coming from the page, we obtain a symplectic cobordism $(V_1,\omega')$ from $Y''$ to $Y'''$ where $Y'''$ is a $\widehat\Sigma'$-bundle over $S^1$, where $\widehat\Sigma'$ is $\Sigma'$ with a disk capping off its boundary. Since the monodromy of the $Y'''$ is an identity, and $(\tau_{\alpha_1}\cdots \tau_{\alpha_{2g}})^{4g+2}$ is isotopic to the identity, there is a Lefschetz fibration $V_2$ over $D^2$ with genus $g$ fibers and vanishing cycles defining the monodromy $(\tau_{\alpha_1}\cdots \tau_{\alpha_{2g}})^{4g+2}$ that has $Y'''$ as its boundary. One may easily verify that $V$ is simply connected and using Theorem~2 in \cite{Ozbagci02} one can compute that $b_2^+(V_2)=2g^2\geq 2$. Clearly $V_2$ admits a symplectic structure $\omega''$. Now Eliashberg \cite{Eliashberg04} showed how to glue $(V_1,\omega')$ and $(V_2,\omega'')$ to get a symplectic cap $X_3''$ for $(Y'', \xi'')$.

    \noindent{\bf Step 4.} {\em Construct the cap $(X,\omega)$ for $(Y,\xi)$ with all the desired properties. }

Using Lemma~\ref{lem:glue} we may glue the three cobordisms constructed above together to get a cap $(X,\omega)$ for $(Y,\xi)$. Since the first cobordism is simply connected and the second two cobordisms are constructed with 2, 3, and $4$--handles we see that $X$ is simply connected. It clearly contains $N_2$ and a regular fiber $T$ in $N_2$ transversely intersects an $S^2$ (the $-2$--framed unknot in Figure~\ref{nucleus}), so we see that the meridian to $T$ is null-homotopic in the complement of $T$. Since the fundamental group of $X-T$ is generated by meridians, we see that $X-T$ is simply connected. Moreover, it is well-known that the regular fibers in the cusp neighborhood in Figure~\ref{nucleus} can be taken to be symplectic. This can be seen in several ways; one way is to take the Lagrangian punctured torus that the trefoil in Figure~\ref{nucleus} bounds and capping it with the Lagrangian core of the 2-handles. This gives a Lagrangian torus isotopic to a fiber. Since this fiber is not null-homologus (since it intersects the $S^2$ discussed above) the symplectic form may be perturbed to make the torus symplectic, {\em cf.\ }\cite{Etnyre98}. 
    
Finally, we show that $\Phi_{X,\frs_0}=c^+(\xi)$. Choose an admissible cut $N$ for both $X$ and $X_3$. Then we can decompose $X$ into 
    \[
        X = X_1 \cup X_2 \cup X_3' \cup_N X_3''.
    \] 

We will denote the restriction of $\frs_0$, the canonical $\Spin^c$ structure of $(X,\omega)$,  to any of the cobordisms above by just $\frs_0$. By Plamenevskaya \cite[Lemma~1]{Plamenevskaya04} and the fact that $Y''$ is a homology sphere, the Ozsv\'{a}th--Szab\'{o} invariant of $(X_3,\omega_3)$ on $\frs_0$ is the contact invariant of $(Y'',\xi'')$. Now we obtain
    \begin{align*}
        c^+(\xi)&= F^+_{X_1\cup X_2,\frs_0}(c^+(\xi''))\\
                &= F^+_{X_1\cup X_2,\frs_0} (\Phi_{X_3,\frs_0})\\
                &= F^+_{X_1\cup X_2,\frs_0} \circ F^+_{X_3',\frs_0}\circ\tau^{-1}\circ F^-_{X_3'',\frs_0}(\theta^-)\\
                &= F^+_{X_1\cup X_2\cup X_3',\frs_0}\circ\tau^{-1}\circ F^-_{X_3'',\frs_0}(\theta^-)\\
                &=\Phi_{X,\frs_0}
    \end{align*}
The first equality follows from the naturality of contact invariant under Stein cobordisms \cite{OzsvathSzabo05a}. The second equality follows from \cite{Plamenevskaya04} as we discussed above. Since $Y''$ is a homology sphere, $\frs_0$ restricted to $X_1\cup X_2\cup X_3'$ uniquely decomposes into $\Spin^c$ structures on $X_1\cup X_2$ and $X_3'$ respectively. Thus the fourth equality follows from the composition law (Proposition~\ref{prop:composition}). The third and fifth equalities follow from the definition.
    
Thus $(X,\omega)$ has all the desired properties. 
\end{proof} 

We are now ready to prove Theorem~\ref{thm:main3}. 

\begin{proof}[Proof of Theorem~\ref{thm:main3}]
Let $K_i$ be any sequence of knots with distinct Alexander polynomials, for example we could take the $K_i$ to be the $(2, 2i+1)$-torus knots. 
%It is well known that the $(p,q)$-torus knot in $S^3$ has Alexander polynomial $f(z)=\frac{(z^{pq}-1)(z-1)}{(z^p-1)(z^q-1)}$. It is then an easy induction to see that the $(p^i,q)$-torus knot $K_i$ has Alexander polynomial $f(z)f(z^p)\cdots f(z^{p^{i-1}})$, see for example \cite{Agle12}, and hence the Alexander polynomial of $K_i$ has more irreducible factors than the Alexander polynomial of $K_{i-1}$. 
%Choose a sequence of $(p^i,q)$-torus knots $K_i$ in $S^3$ with $p$ and $q$ relatively prime, in particular notice that the Alexander polynomial of $K_i$ factors into $f(z)f(z^p)\cdots f(z^{p^{i-1}})$ where $f(z)=\frac{(z^{pq}-1)(z-1)}{(z^p-1)(z^q-1)}$, and hence $f(z)$ has at least $i$ irreducible factors. % 1+z+z^2+\cdots+z^{p-1}$. 
Let $X_i$ be the result of $K_i$ knot surgery on $X$ using the fiber torus in the cusp neighborhood guaranteed by Proposition~\ref{cap}. Since any torus knot is fibered, $X_i$ has a symplectic structure according to Lemma~\ref{lfs}. 

We are left to see that the $X_i$ are all non-diffeomorphic but are homeomorphic. For the latter, it is not hard to see an infinite family of the $X_i$ are homeomorphic using the following result of Boyer. 
\begin{theorem}[Boyer 1986, \cite{Boyer86}]\label{boyer}
Let $Y$ be a closed, oriented, connected $3$--manifold and $L$ be a symmetric pairing on $\Z^n$. There are finitely many homeomorphisms types of compact, simply connected, oriented $4$--manifolds with boundary $Y$ and intersection pairing isomorphic to $(\Z^n,L)$. 
\end{theorem}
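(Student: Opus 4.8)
The plan is to reduce the oriented–homeomorphism classification of these $X$ to an algebraic problem and then count solutions, imitating Freedman's classification of closed simply connected topological $4$--manifolds. The first, substantive, step is a rigidity statement: the oriented homeomorphism type of a compact, simply connected, oriented \emph{topological} $4$--manifold $X$ with $\partial X\cong Y$ is governed by a package of algebraic invariants $\mathcal{I}(X)=(\lambda_X,\ \mathrm{ks}(X),\ \theta_X)$, where $\lambda_X$ is the (possibly degenerate) intersection form on $H_2(X;\Z)$, $\mathrm{ks}(X)\in\Z/2$ is the Kirby--Siebenmann invariant, and $\theta_X$ is a ``boundary gluing datum'' recording how the image of $H_2(X)$ in $H_2(X,\partial X)$ and the torsion linking form sit over $H_1(Y)$ and its linking form; moreover $\mathcal{I}$ has finite fibres (indeed Boyer shows it is essentially injective). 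Granting this, the theorem reduces to showing that, once $Y$ and the isometry class of $(\Z^n,L)$ are fixed, only finitely many packages with $\lambda_X\cong(\Z^n,L)$ can occur, since then the set of homeomorphism types is a finite union of finite fibres.

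I would first pin down the shape of $\theta_X$. As $X$ is simply connected, $H_1(X)=0$, so $H^2(X)\cong\Hom(H_2(X),\Z)$ is free of rank $n$ and Lefschetz duality gives $H_2(X,\partial X)\cong H^2(X)\cong\Z^n$; under these identifications the map $H_2(X)\to H_2(X,\partial X)$ is the adjoint $\widehat L\colon\Z^n\to(\Z^n)^{*}$ of $L$. The long exact sequence of the pair then shows $\partial\colon H_2(X,\partial X)\to H_1(Y)$ is onto with kernel $\widehat L(\Z^n)$, so $\partial$ induces an isomorphism $\bar\partial\colon\mathrm{coker}(L)\xrightarrow{\ \sim\ }H_1(Y)$, and one checks that $\bar\partial$ carries the discriminant linking form of $L$ to the torsion linking form of $Y$ (up to sign). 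Thus, apart from the $\Z/2$ worth of Kirby--Siebenmann data, $\theta_X$ amounts to such an isomorphism $\theta$, considered modulo the natural actions of the isometry group $O(L)$ on the source (through its induced action on $\mathrm{coker}(L)$) and of the image of $\mathrm{Homeo}(Y)$ in $\mathrm{Aut}(H_1(Y))$ on the target.

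The finiteness of the set of such $\theta$ up to equivalence is then a lattice computation. The radical $R=\ker\widehat L$ is a direct summand of $\Z^n$ (since $\Z^n/R$ embeds in $(\Z^n)^{*}$ and is free), $L$ induces a nondegenerate form $\bar L$ on $\Z^n/R$, and one obtains a splitting $\mathrm{coker}(L)\cong R^{*}\oplus D$ with $R^{*}\cong\Z^{b_1(Y)}$ free and $D=\mathrm{disc}(\bar L)$ finite. Fixing any complement $\Z^n=R\oplus C$, every $\phi\in\GL(R)$ extends to the isometry $\phi\oplus\mathrm{id}_C$ of $(\Z^n,L)$ (the cross terms vanish because $R$ pairs trivially with everything), so $O(L)$ surjects onto $\GL(R)$, and its action on $\mathrm{coker}(L)$ realises all of $\GL(R^{*})$ on the free summand while fixing $D$. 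Using this to normalise the free part of $\theta$ to a fixed reference isomorphism, what remains unspecified is a linking-form isometry $D\xrightarrow{\ \sim\ }\mathrm{tors}\,H_1(Y)$ together with a ``mixed'' homomorphism in $\Hom(\Z^{b_1(Y)},\mathrm{tors}\,H_1(Y))$, both of which lie in finite sets. Hence there are finitely many admissible $\theta$ up to equivalence, finitely many packages, and (combined with the finite fibres of $\mathcal{I}$) finitely many homeomorphism types.

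The main obstacle is the rigidity input of the first paragraph: proving that $\mathcal{I}$ is a finite-to-one oriented–homeomorphism invariant of compact simply connected topological $4$--manifolds with fixed boundary. This requires Freedman's disk-embedding technology --- to do surgery on embedded $2$--spheres, cancel handles, and run an $h$--cobordism argument rel boundary --- and is precisely the content of Boyer's theorem; the homological and linking-form bookkeeping of the second and third paragraphs is routine by comparison. I would also be careful about two points: a homeomorphism $X\to X'$ is permitted to move the boundary (hence the quotient by $\mathrm{Homeo}(Y)$ above), and one never needs surjectivity of $\mathcal{I}$ --- knowing that every admissible package is realised only helps, since it cannot enlarge the count.
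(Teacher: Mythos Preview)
The paper does not prove this theorem at all: it is quoted verbatim as a result of Boyer \cite{Boyer86} and used as a black box inside the proof of Theorem~\ref{thm:main3} (and again in the proof of Theorem~\ref{thm:main4}). There is therefore no proof in the paper to compare your proposal against.

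That said, your sketch is a faithful outline of the strategy in Boyer's original paper: classify compact simply connected topological $4$--manifolds with a given boundary by an algebraic package consisting of the intersection form, the Kirby--Siebenmann invariant, and a presentation of $H_1(Y)$ with its linking form as the discriminant of the form, modulo the actions of $O(L)$ and the mapping class group of $Y$; then observe that for fixed $Y$ and fixed isometry class $(\Z^n,L)$ only finitely many such packages occur. Your reduction of the ``boundary datum'' to a finite set via the splitting $\mathrm{coker}(L)\cong R^{*}\oplus D$ and the surjection $O(L)\twoheadrightarrow\GL(R)$ is correct. The substantive input you flag --- that the package $\mathcal{I}$ is a complete (or at least finite-to-one) invariant, which requires Freedman's surgery and $h$--cobordism theory rel boundary --- is exactly what Boyer establishes, and you are right that this is where all the real work lies; the lattice bookkeeping is routine by comparison.
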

However, as pointed out to the authors by Gompf, one can show the $X_i$ are all homeomorphic. To see this we first note that Boyer points out the corollary to his theorem that if $W$ is a $4$--manifold with even intersection form and homology sphere boundary, then any other $4$--manifold with the same intersection from and boundary is homeomorphic to $W$ (by a homeomorphism that might not be the identity on $\partial W$). Now consider the nucleus $N$ given in Figure~\ref{nucleus} with $n=2$ and used in our construction of the $X_i$. Let $N_i$ be the result of $K_i$ knot surgery on the torus in $N$. From above we see that all the $N_i$ are homeomorphic to each other, but we don't know the homeomorphisms are the identity on the boundary. To remedy this, we observe that the diffeomorphisms of $\partial N$ up to isotopy form a group of order $2$ and the non-trivial diffeomorphism extends over $N$, see \cite[Proof of Lemma~3.7]{Gompf91}. Since any homeomorphism of a $3$--manifold is isotopic to a diffeomorphism (by work of Cerf \cite{Cerf68} and Hatcher's proof of the Smale conjecture \cite{Hatcher83}) if the homeomorphism from $N$ to $N_i$ is not the identity on the boundary then we can compose with the non-trivial homeomorphism mentioned above to see that $N$ and $N_i$ are homeomorphic by a homeomorphism fixing the boundary. But now since $X_i$ is imply $(X\setminus N) \cup N_i$, we see that $X_i$ is homeomorphic to $X$ by the above constructed homeomorphism on $N$ and the identity on $X\setminus N$. 

%, note that all the $X_i$ will have the same homology and intersection form. Moreover, since the complement of the torus used for knot surgery is simply connected it is easy to see that the $X_i$ are also simply connected. Now the following theorem of Boyer says there are only a finite number of homeomorphism types with this intersection form and boundary.  Thus we see that an infinitely subset of the $X_i$ must be homeomorphic. 

Since $T$ is a symplectic torus we can choose $\omega$ so that
\[
    \int_T\omega=1.
\]
Thus we can choose $\b=(b_1,\ldots,b_n)$, a basis of $H^2(X,\partial X;\R)$ such that $\langle b_1,[T]\rangle=1$ and $\langle b_i,[T]\rangle=0$ for $i\geq1$. We use Proposition~\ref{cap} to see that $\Phi_{X;\b}\neq0$. Now we see the $X_i$ are pairwise not diffeomorphic by Corollary~\ref{cor:main2}.
%By performing knot surgery on $X$ using knots with non-monic Alexander polynomials, we can similarly construct an infinite number of smooth structures that do not admit symplectic structures giving a cap for $(Y,\xi)$.
\end{proof}

We now turn to the proof of Theorem~\ref{thm:main4} concerning exotic fillings of manifolds that are the boundary of weakly convex symplectic $4$--manifolds. 
\begin{proof}[Proof of Theorem~\ref{thm:main4}]
Since admitting a filling with exotic smooth structures is independent of the orientation we will assume that $Y=\partial X$ where $X$ is a $4$--manifold that admits a symplectic form $\omega$ with weakly convex boundary. 

An argument very close to that given in Step 2.\ of the proof of Proposition~\ref{cap}, allows us to attach $2$--handles to $(X,\omega)$ to obtain a new symplectic manifold $(X'',\omega'')$ with weakly convex homology sphere boundary into which $(X,\omega)$ embeds. For details see \cite{Etnyre04}. If $Y$ is a rational homology sphere, a weak symplectic filling can be modified near the boundary into a strong symplectic filling \cite{OO99}. Thus we may assume that $(X'',\omega'')$ has a strongly convex boundary and can hence be capped off as above yielding the claimed symplectic manifold $(X',\omega')$ into which $(X,\omega)$ embeds and for which $X'\setminus X$ is simply connected and contains a cusp neighborhood that contains a symplectic torus. 

Let $K_i$ be the $(p^i,q)$-torus knot used in the proof of Theorem~\ref{thm:main3}. Let $X'_i$ be the result of $K_i$ knot surgery on $X'$ using the fiber torus in the cusp neighborhood. According to Lemma~\ref{lfs}, $X'_i$ has a symplectic structure $\omega_i$. Clearly $(X,\omega)$ embeds in all of these and so $(C_i,\omega_{C_i})=(\overline{X'_i \setminus X}, \omega_i)$ are all symplectic caps for $(Y,\xi)$. 

We are left to see that the $C_i$ are all non-diffeomorphic but are homeomorphic. For the latter, notice that the $C_i$ are all obtained from $\overline{X' \setminus X}$ by $K_i$ knot surgery. As noted in Section~\ref{KCS} all the $C_i$ will have the same homology and intersection form; moreover, since the complement of the torus used for knot surgery is simply connected the $C_i$ are also simply connected. By the argument of Gompf above, we see that all of $C_i$ must be homeomorphic. 

To see all the $C_i$ are not diffeomorphic, we can use a result of Ozsv\'ath and Szab\'o \cite{OzsvathSzabo04} that says the Ozsv\'ath--Szab\'o invariant of a closed symplectic manifold is non-trivial. Thus $\Phi_{X;\b}\not=0$. Now Theorem~\ref{consurg} says 
\[
\Phi_{X'_i;\b}=\Delta_{K_i}(z) \cdot \Phi_{X;\b}.
\]
Recall that for the trivial self-concordance $\cC$ of a knot in $S^3$, $\Lef_z(\cC)$ is simply $\Delta_K(z)$. As argued in the proof of Corollary~\ref{cor:main2}, $\Phi_{X'_i;\b}$ will be distinct and $X_i'$ will be non-diffeomorphic. But if any two of the $C_i$ are diffeomorphic by a diffeomorphism that is the identity on the boundary, then we could extend the diffeomorphism over the corresponding $X'_i$. Thus the $C_i$ are not diffeomorphic. 

%Thus infinitely many of the $X_i'$ will be non-diffeomorphic. But if any two of the $C_i$ are diffeomorphic by a diffeomorphism that is the identity on the boundary, then we could extend the diffeomorphism over the corresponding $X'_i$, thus infinitely many of the $C_i$ are not diffeomorphic. 

%To see the $\mathcal{SW}_{X'_i}$ we study the group ring of $H_2(X')$. Let $G$ be the subset of $H_2(X')$ generated by $[T]$. Note that as noted in the proof of Proposition~\ref{cap}, there is a sphere in $X'$ that intersects $T$ transversely and in one point. Thus $[T]$ is of infinite order in $H_2(X')$ and we see that $G$ is isomorphic to $\Z$. It is easy to see that $\Z[\Z]$ is the set of Laurent polynomials with integer coefficients and hence has no zero devisors. Now Lemma~1.1.4 in \cite{Passman85} says that if $G$ is a subgroup of $H$ and $\alpha$ is an element of $\Z[G]$, then $\alpha$ is a zero devisor in $\Z[G]$ if and only if it is a zero devisor in $\Z[H]$. (The cited lemma considers group rings with over a field, but clearly this implies the results for group rings over $\Z$). Thus all the $\mathcal{SW}_{X'_i}$ are non-zero. Finally let $\mathcal{SW}_{X'}=  \beta$ where the highest power of $t_{[T]}$ with non-zero coefficient in $\beta\in \Z[H_2(X')]$ is $n$. Then $\mathcal{SW}_{X'_i}= \Delta_{K_i}(t^2_{[T]})\cdot  \beta$. And this element of the group ring has a non-zero coefficient on the element $t_{[T]}^{n+2i}$ and all the coefficients on terms having a power $t_{[T]}^m$ with $m>n+2i$ are zero. Thus all the $\mathcal{SW}_{X_i}$ are distinct. 

By performing knot surgery on $X'$ using knots with non-monic Alexander polynomials, we can similarly construct an infinite number of smooth structures that do not admit symplectic structures. The corresponding $\overline{X' \setminus X}$ cannot admit symplectic structures giving a cap for $(Y,\xi)$ or we could glue them to $(X,\omega)$ to obtain a symplectic structure for the smooth structure on $X'$.  
\end{proof}

%%%%%%%%%%%%%%%%%%%%%%%%%%%%%%%%%%%%
\section{Embeddings in definite manifolds}\label{df}
%%%%%%%%%%%%%%%%%%%%%%%%%%%%%%%%%%%%
In this section we will prove Theorem~\ref{thm:main5} that says, if a rational homology sphere embeds as a separating hypersurface in a closed definite manifold, then it is the boundary of a smooth $4$--manifold that admits infinitely many distinct smooth structures. 
\begin{proof}[Proof of Theorem~\ref{thm:main5}]
Suppose $Y$ is a closed, oriented, connected $3$--manifold that embeds as a separating hypersurface in the closed definite $4$--manifold $W$. By reversing the orientation on $W$ if necessary we can assume it is negative definite. Using a Morse function on $W$ that has $Y$ as a regular level set, we can construct a handlebody structure on $W$ where the $1$-handles are disjoint from $Y$ and thus if the first Betti number of $W$ was positive, we could surger circles disjoint from $Y$ corresponding to the appropriate $1$-handles to kill the first Betti number and not change the second homology. 

We may remove two balls from $W$ that are on opposite sides of $Y$ to get a cobordism $W^\circ$ from $S^3$ to itself. In the proof of Theorem~9.1 in \cite{OzsvathSzabo03} it is shown that for any $\Spin^c$ structure $\frs$ on $W^\circ$ we have 
\[
F^\infty_{W^\circ \!\!, \frs} : HF^\infty(S^3, \frt)\to HF^\infty(S^3, \frt)
\]
is an isomorphism where $\frt$ is the unique $\Spin^c$ structure on $S^3$ and thus
\[
F^+_{W^\circ \!\!,\frs}: HF^+(S^3, \frt)\to HF^+(S^3, \frt)
\]
is surjective. 

Now as we know the intersection form of $W$ is diagonalizable by Donaldson's theorem \cite{Donaldson83}, it is not hard to see that there is a characteristic element $v=(1,\dots,1)$ in $H^2(W)$ such that $v\cdot v + n=0$ where $n$ is the dimension of $H^2(W)$. Since $v$ is characteristic we know there is some $\Spin^c$ structure $\frs_0$ whose first Chern class is $v$. 

From our discussion above if $\Theta^+$ is a generator of $HF^+(S^3, \frt)$ of minimal grading, then there is some element $\eta\in HF^+(S^3, \frt)$ that is mapped to it by $F^+_{W^\circ \!\!,\frs_0}$. The grading shift formula says 
\[
gr(F^+_{W^\circ \!\!,\frs_0}(\eta))- gr(\eta)=\frac{c_1^2(\frs_0) - 2\chi(W) - 3\sigma(W)}{4}= \frac{c_1^2(\frs_0) + n}{4}=0.
\]
and thus $\eta=\Theta^+$.

Notice that since $Y$ is a rational homology sphere, a $\Spin^c$ structure on $W^\circ$ is uniquely determined by its restrictions to $W_1$ and $W_2$, where $W_1$ and $W_2$ are the components of $W^\circ \setminus Y$. From this we know that if $\frs_0$ also denotes the restriction of $\frs_0$ to $W_1$, then we see that $F^+_{W_1,\frs_0}(\Theta^+)\not= 0$ in $HF^+(Y,\frs_0|Y)$.  

Now let $X$ be the result of gluing the $S^3$ boundary component of $W_1$ to the $K3$-surface minus a $3$--ball. Since the $K3$-surface is symplectic we know that its Ozsv\'ath--Szab\'o invariant is non-zero. In particular, the mixed map of the $K3$-surface sends the top generator $\Theta^-$ in $HF^-(S^3, \frt_0)$ to $\Theta^+$ in $HF^+(S^3,\frs_0)$. From this we know that $X$ (minus a ball) thought of as cobordism from $S^3$ to $Y$ has a $\Spin^c$ structure $\frs_0$ such that $F^{mix}_{X, \frs_0}(\Theta^-)\not=0$ in $HF^+(Y,\frs_0|_Y)$. Thus 
the Ozsv\'ath--Szab\'o invariant $\Phi_{X;\b}$ is non-trivial. 

Recall the $K3$-surface contains a cusp neighborhood with symplectic regular fibers and thus $X$ does too and we can apply Corollary~\ref{cor:main2} to obtain infinitely many non-diffeomorphic smooth structures on $X$. We notice that all of these structures are homeomorphic since the knot surgeries are done in the $K3$-surface and hence to not change the topological type of it and when glued to the fixed manifold $W_1$ we still have homeomorphic manifolds. 
\end{proof}

\bibliography{references}{}
\bibliographystyle{plain}

\end{document}